\documentclass{amsart}
\usepackage[ascii]{inputenc}
\usepackage{lineno}

\usepackage{amssymb,comment,dsfont,subfig}
\usepackage{bbm}
\usepackage{bm}
\usepackage[dvipsnames]{xcolor} 
\usepackage{graphicx,enumerate} 
\usepackage[all]{xy} 
\usepackage{tikz}
\usetikzlibrary{decorations.pathreplacing}
\usepackage{mathtools} 
\mathtoolsset{centercolon} 
\xyoption{rotate}
\xyoption{pdf}
\newtheorem{theorem}[equation]{Theorem}
\newtheorem{lemma}[equation]{Lemma}

\newtheorem{goal}[equation]{Goal}

\newtheorem{fact}[equation]{Fact}
\newtheorem{corollary}[equation]{Corollary}
\newtheorem{question}{Question}
\newtheorem*{mainthm}{Main Theorem}
\theoremstyle{definition}
\newtheorem{deflemma}[equation]{Definition and Lemma}
\newtheorem{definition}[equation]{Definition}
\newtheorem{remark}[equation]{Remark}
\newtheorem*{remark*}{Remark}
\newtheorem{example}[equation]{Example}
\newtheorem{assumption}[equation]{Assumption}
\newtheorem{notation}[equation]{Notation}
\numberwithin{equation}{section}

\newcommand{\Hgroup}{\mathcal H} 
\overfullrule=5pt

\usepackage[hidelinks]{hyperref}

\DeclareMathOperator{\cf}{cof}

\newcommand{\COB}{\mathsf{COB}}
\newcommand{\LCU}{\mathsf{LCU}}

\DeclareMathOperator{\add}{add}
\DeclareMathOperator{\cov}{cov}
\DeclareMathOperator{\non}{non}
\DeclareMathOperator{\cof}{cof}

\DeclareMathOperator{\cp}{cp}

\DeclareMathOperator{\dom}{dom}
\DeclareMathOperator{\ran}{ran}

\newcommand{\Null}{\mathcal N}
\newcommand{\Meager}{\mathcal M}
\newcommand{\addN}{{\ensuremath{\add(\Null)}}}
\newcommand{\cofN}{{\ensuremath{\cof(\Null)}}}
\newcommand{\covN}{{\ensuremath{\cov(\Null)}}}
\newcommand{\nonN}{{\ensuremath{\non(\Null)}}}
\newcommand{\addM}{{\ensuremath{\add(\Meager)}}}
\newcommand{\cofM}{{\ensuremath{\cof(\Meager)}}}
\newcommand{\covM}{{\ensuremath{\cov(\Meager)}}}
\newcommand{\nonM}{{\ensuremath{\non(\Meager)}}}

\newcommand{\cfrak}{\mathfrak{c}}
\newcommand{\bfrak}{\mathfrak{b}}
\newcommand{\dfrak}{\mathfrak{d}}
\newcommand{\gfrak}{\mathfrak{g}}
\newcommand{\hfrak}{\mathfrak{h}}
\newcommand{\sfrak}{\mathfrak{s}}
\newcommand{\rfrak}{\mathfrak{r}}
\newcommand{\mfrak}{\mathfrak{m}}
\newcommand{\pfrak}{\mathfrak{p}}
\newcommand{\tfrak}{\mathfrak{t}}
\newcommand{\xfrak}{\mathfrak{x}}
\DeclareMathOperator{\mlabel}{m} 
\DeclareMathOperator{\plabel}{p} 

\newcommand{\mlike}{$\mathfrak{m}$-like}
\newcommand{\hlike}{$\mathfrak{h}$-like}

\newcommand{\innitialmark}[1]{{#1}^\text{pre}}

\newcommand{\addNi}{\innitialmark{\addN}}
\newcommand{\covNi}{\innitialmark{\covN}}

\newcommand{\nonMi}{\innitialmark{\nonM}}

\newcommand{\bfraki}{\innitialmark{\bfrak}}

\newcommand{\cfraki}{\innitialmark{\cfrak}}

\newcommand{\la}{\langle}
\newcommand{\ra}{\rangle}
\newcommand{\seq}[2]{\la #1 : #2 \ra}
\newcommand{\set}[2]{\{ #1 : #2 \}}

\AtBeginDocument{%
   \def\MR#1{}
}

\title[Preservation of splitting families]{Preservation of splitting families \\ and cardinal characteristics of the continuum}

\author[M.\ Goldstern]{Martin Goldstern}
\address{Institut f\"ur Diskrete Mathematik und Geometrie, TU Wien, 1040 Vienna, Austria.}
\email{martin.goldstern@tuwien.ac.at}
\urladdr{http://www.tuwien.ac.at/goldstern/}

\author[J.\ Kellner]{Jakob Kellner}
\address{Institut f\"ur Diskrete Mathematik und Geometrie, TU Wien, 1040 Vienna, Austria.}
\email{kellner@fsmat.at}
\urladdr{http://dmg.tuwien.ac.at/kellner/}

\author[D.A.\ Mej\'{i}a]{Diego A. Mej\'{i}a}
\address{Creative Science Course (Mathematics), Faculty of Science, Shizuoka University, Ohya 836, Suruga-ku, Shizuoka-shi, Japan 422-8529.}
\email{diego.mejia@shizuoka.ac.jp}
\urladdr{http://www.researchgate.net/profile/Diego\_Mejia2}

\author[S.\ Shelah]{Saharon Shelah}
\address{Einstein Institute of Mathematics, The Hebrew University of Jerusalem, Jerusalem, 91904, Israel, and Department of Mathematics, Rutgers University, New Brunswick, NJ 08854, USA.}
\email{shelah@math.huji.ac.il}
\urladdr{http://shelah.logic.at}

\thanks{This work was supported by the following grants:
Austrian Science Fund (FWF): project number I3081, P29575
(first author); P30666
(second author);
Grant-in-Aid for Early Career Scientists 18K13448, Japan Society for the Promotion of Science (third author); Israel Science Foundation (ISF) grant no: 1838/19 (fourth author). This is publication number 1199 of the fourth author.}

\subjclass[2010]{03E17, 03E35, 03E40}
\date{\today}
\DeclareMathOperator{\supp}{supp}

\begin{document}

\begin{abstract}
    We show how to construct, via forcing, splitting families than are preserved by a certain type
    of finite support iterations. As an application, we construct a model where 15 classical characteristics of the continuum are pairwise different, concretely: the 10 (non-dependent) entries in Cicho\'n's diagram, $\mfrak(2\text{-Knaster})$, $\pfrak$, $\hfrak$, the splitting number $\sfrak$ and the reaping number $\rfrak$.
\end{abstract}

\maketitle

\newcommand{\Bbf}{\mathbf{B}}
\newcommand{\Dbf}{\mathbf{D}}
\newcommand{\Rbf}{\mathbf{R}}
\newcommand{\tbf}{{}}
\newcommand{\ttbf}{{\Por}}

\newcommand{\Bwf}{\mathcal{B}}
\newcommand{\Dwf}{\mathcal{D}}
\newcommand{\Fwf}{\mathcal{F}}
\newcommand{\Mwf}{\mathcal{M}}
\newcommand{\Pwf}{\mathcal{P}}

\newcommand{\Cor}{\mathbb{C}}
\newcommand{\Gor}{\mathbb{G}}
\newcommand{\Hor}{\mathbb{H}}
\newcommand{\Por}{\mathbb{P}}
\newcommand{\Qor}{\mathbb{Q}}
\newcommand{\R}{\mathbb{R}}
\newcommand{\Mor}{\mathbb{M}}
\newcommand{\Sor}{\mathbb{S}}
\newcommand{\Zbb}{\mathbb{Z}}

\newcommand{\Qnm}{\dot{\mathbb{Q}}}

\newcommand{\proj}{\mathrm{pr}}
\newcommand{\Rsp}{R_{\mathrm{sp}}}
\newcommand{\Rbsp}{\mathbf{R}_{\mathrm{sp}}}

\newcommand{\menos}{\smallsetminus}
\newcommand{\imp}{{\ \mbox{$\Rightarrow$} \ }}
\newcommand{\sii}{{\ \mbox{$\Leftrightarrow$} \ }}
\newcommand{\id}{\mathrm{id}}
\newcommand{\leqT}{\leq_{\mathrm{T}}}
\newcommand{\eqT}{\cong_{\mathrm{T}}}

\newcommand{\blue}[1]{{\color{blue}#1}}
\newcommand{\red}[1]{{\color{red}#1}}


\section{Introduction}

In this paper we present a method to preserve certain  splitting families along finite support iterations.
These splitting families are constructed via forcing, using specific uncountable $2$-edge-labeled graphs\footnote{A \emph{$2$-edge-labeled graph} is a simple graph whose edges are labeled by either $0$ or $1$.} as support.
The main application of this method is a forcing model where many classical cardinal characteristics of the continuum
are pairwise different, including the \emph{splitting number} $\sfrak$ and the \emph{reaping number} $\rfrak$.

We assume that the reader is familiar with \emph{Cicho\'n's diagram} (Figure~\ref{fig:cichon}) containing the characteristics that we will call
\emph{Cicho\'n-characteristics}.
We also investigate some of the characteristics in the \emph{Blass diagram}~\cite[Pg.~481]{Blass}. Figure~\ref{fig:all20} illustrates both diagrams combined, along with all the ZFC-provable inequalities that we are aware of. See~\cite{Blass,BaJu} for the definitions and the proofs for the inequalities (with the exception of $\cofM\leq\mathfrak{i}$, which was proved in~\cite{BHH}). In the following, we only give the definitions of the non-Cicho\'n-characteristics that we will investigate in this paper.

\begin{definition}\label{def:cardchar}

   \begin{enumerate}[(1)]
     \item For $a,b\in[\omega]^{\aleph_0}$, we define $a\subseteq^* b$ iff $a\smallsetminus b$ is finite;
     \item and we say that \emph{$a$ splits $b$} if both $a\cap b$ and $b\smallsetminus a$ are infinite, that is, $a\nsupseteq^* b$ and $\omega\smallsetminus a\nsupseteq^* b$.

    \item $F\subseteq[\omega]^{\aleph_0}$ is a \emph{splitting family} if every $y\in[\omega]^{\aleph_0}$ is split by some $x\in F$. The \emph{splitting number} $\sfrak$ is the smallest size of a splitting family.
    \item $D\subseteq[\omega]^{\aleph_0}$ is an \emph{unreaping family} if no $x\in[\omega]^{\aleph_0}$ splits every member of $D$. The \emph{reaping number} $\rfrak$ is the smallest size of an unreaping family.
    \item $D\subseteq[\omega]^{\aleph_0}$ is \emph{groupwise dense} when:
      \begin{enumerate}[(i)]
        \item if $a\in[\omega]^{\aleph_0}$, $b\in D$ and $a\subseteq^* b$, then $a\in D$,
        \item if $\la I_n:n<\omega\ra$ is an interval partition of $\omega$ then $\bigcup_{n\in a}I_n\in D$ for some $a\in[\omega]^{\aleph_0}$.
      \end{enumerate}
      The \emph{groupwise density number $\gfrak$} is the smallest size of a collection of groupwise dense sets whose intersection is empty.
    \item The \emph{distributivity number $\hfrak$} is the smallest size of a collection of dense subsets of $\la[\omega]^{\aleph_0},\subseteq^*\ra$ whose intersection is empty.
    \item Say that $a\in[\omega]^{\aleph_0}$ is a \emph{pseudo-intersection of $F\subseteq[\omega]^{\aleph_0}$} if $a\subseteq^* b$ for all $b\in F$.
    \item The \emph{pseudo-intersection number} $\pfrak$ is the smallest size of a filter base of subsets of $[\omega]^{\aleph_0}$ without pseudo-intersection.
    \item The \emph{tower number} $\tfrak$ is the smallest length of a (transfinite) $\subseteq^*$-decreasing sequence in $[\omega]^{\aleph_0}$ without pseudo-intersection.
    \item Given a class $\Pwf$ of forcing notions, $\mfrak(\Pwf)$ denotes the minimal cardinal $\kappa$ such that, for some $Q\in\Pwf$, there is some collection $\Dwf$ of size $\kappa$ of dense subsets of $Q$ without a filter in $Q$ intersecting every member of $\Dwf$.
    \item Let $\Por$ be a poset. A set $A\subseteq \Por$ is \emph{$k$-linked (in $\Por$)} if every $k$-element subset of $A$
    has a  lower bound in $\Por$.
    $A$ is \emph{centered} if it is $k$-linked for all $k\in\omega$.
    \item
    A poset $\Por$ is \emph{$k$-Knaster}, if for each uncountable $A\subseteq \Por$
    there is a $k$-linked uncountable $B\subseteq A$.
    And $\Por$ \emph{has precaliber $\aleph_1$}, if such a $B$ can be chosen centered.
    For notational convenience, \emph{$1$-Knaster} means ccc, and \emph{$\omega$-Knaster} means precaliber $\aleph_1$.
    \item
    For $1\leq k\leq \omega$ denote $\mfrak_k:=\mfrak(k\text{-Knaster})$ and $\mfrak:=\mfrak_1$. We also set $\mfrak_0:=\aleph_1$.
  \end{enumerate}
\end{definition}

\newcommand{\mye}{*+[F.]{\phantom{\lambda}}}
\begin{figure}
\resizebox{\textwidth}{!}{$
\xymatrix@=2ex{
&
&
&\cfrak
\\
\covN\ar[r]
&\nonM\ar[r]
&\cofM\ar[r]
&\cofN\ar[u]
\\
&
\mathfrak b\ar[r]\ar[u]
&\mathfrak d\ar[u]
\\
\addN\ar[r]\ar[uu]
&\addM\ar[r]\ar[u]
&\covM\ar[r]\ar[u]
&\nonN\ar[uu]
\\
\aleph_1\ar[u]
}\quad
\xymatrix@=2ex{
&
&
&\cfrak
\\
\covN\ar[r]
&\nonM\ar[r]
&\mye\ar[r]
&\cofN\ar[u]
\\
&
\mathfrak b\ar[r]\ar[u]
&\mathfrak d\ar[u]
\\
\addN\ar[r]\ar[uu]
&\mye\ar[r]\ar[u]
&\covM\ar[r]\ar[u]
&\nonN\ar[uu]
\\
\aleph_1\ar[u]
}
$}
    \caption{\label{fig:cichon}Cicho\'n's diagram (left). In the version on the right, the
two ``dependent'' values $\addM=\min\{\mathfrak b, \covM\}$ and $\cofM=\max\{\nonM,\mathfrak d\}$ are removed; the ``independent'' ones remain (nine entries excluding $\aleph_1$, or ten including it). An arrow $\mathfrak x\rightarrow \mathfrak y$ means that ZFC proves $\mathfrak x\le \mathfrak y$.}
\end{figure}
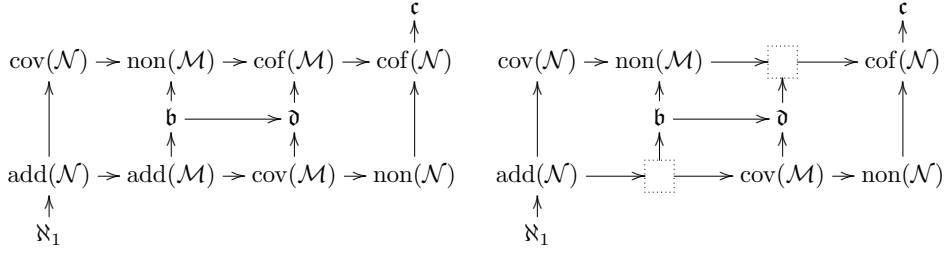

\usetikzlibrary{arrows}

\begin{figure}
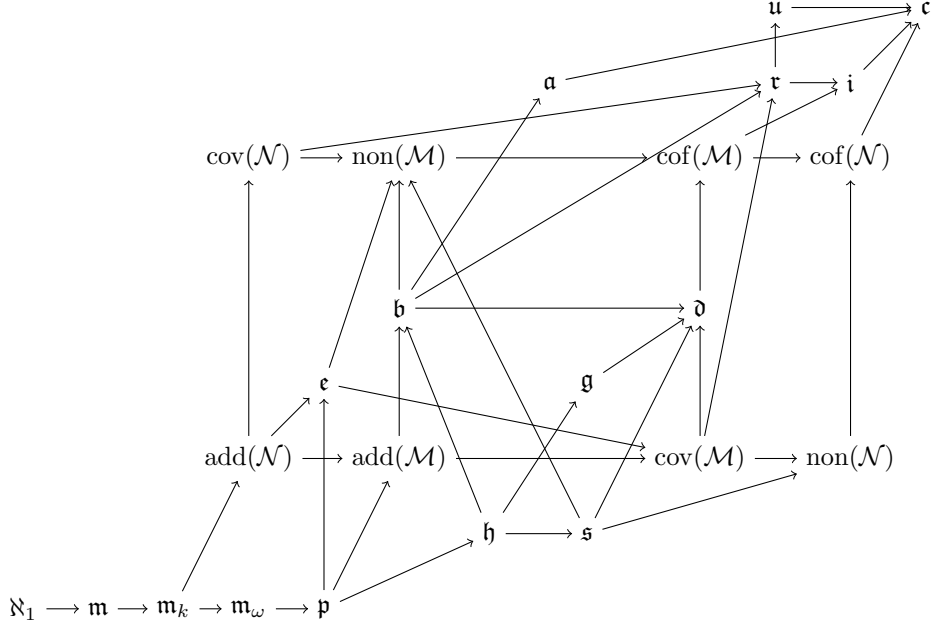

\[
\tikz{
\node (a1) at (-2,1) {$\aleph_1$} ;
\node (m) at (-1,1) {$\mathfrak m$} ;
\node (mk) at (0,1) {$\mathfrak{m}_k$} ;
\node (prec) at (1,1) {$\mathfrak{m}_\omega$} ;
\node (p) at (2,1) {$\mathfrak p$} ; 
\node (e) at (2,4) {$\mathfrak e$};
\node (addn) at (1,3){\addN};
\node (covn) at (1,7){\covN};
\node (nonn) at (9,3) {\nonN} ;
\node (cfn) at (9,7) {\cofN} ;
\node (addm) at (3,3) {\addM} ;
\node (covm) at (7,3) {\covM} ;
\node (nonm) at (3,7) {\nonM} ;
\node (cfm) at (7,7) {\cofM} ;
\node (b) at (3,5) {$\mathfrak b$};
\node (d) at (7,5) {$\mathfrak d$};
\node (h) at (4.2,2)   {$\mathfrak h$};
\node (s) at (5.5,2)   {$\mathfrak s$};
\node (g) at (5.5,4)   {$\mathfrak g$};
\node (a) at (5,8)   {$\mathfrak a$};
\node (r) at (8,8)   {$\mathfrak r$};
\node (u) at (8,9)   {$\mathfrak u$};
\node (i) at (9,8)   {$\mathfrak i$};
\node (c) at (10,9) {$\mathfrak c$};
\draw (a1) edge[->] (m);
\draw (m) edge[->] (mk);
\draw (mk) edge[->] (addn)
     (addn) edge[->] (covn)
     (covn) edge [->] (nonm)
      (nonm)edge [->] (cfm)
      (cfm)edge [->] (cfn);
\draw
   (addn) edge [->]  (addm)
   (addm) edge [->]  (covm)
   (covm) edge [->]  (nonn)
   (nonn) edge [->]  (cfn);
\draw (addm) edge [->] (b)
     (b)  edge [->] (nonm);
\draw (covm) edge [->] (d)
      (d)  edge[->] (cfm);
\draw (b) edge [->] (d);
\draw (mk) edge[->] (prec)
     (prec) edge[->] (p)
     (p) edge[->] (e);
\draw (p) edge [->] (h);
\draw (h) edge [->] (b);
\draw (b) edge [->] (r);
\draw (r) edge [->] (u);
\draw (covn) edge [->] (r);
\draw (r) edge [->] (i);
\draw (b) edge [->] (a);
\draw (e) edge [->] (nonm);
\draw (h) edge [->] (g);
\draw (g) edge [->] (d);
\draw (h) edge [->] (s);
\draw (s) edge [->] (d);
\draw (s) edge [->] (nonm);
\draw (s) edge [->] (nonn);
\draw (cfm) edge [->] (i);
\draw (addn) edge [->] (e)  (e) edge [->] (covm);
\draw (covm) edge [->] (r);
\draw (p) edge [->] (addm);
\draw (cfn) edge[->] (c);
\draw (i) edge[->] (c);
\draw (u) edge[->] (c);
\draw (a) edge[->] (c);
}
\]
\caption{\label{fig:all20}Cicho\'n's diagram and the Blass  diagram combined. An arrow $\mathfrak x\rightarrow \mathfrak y$ means that ZFC proves $\mathfrak x\le \mathfrak y$.}
\end{figure}

Below we list some additional properties of these cardinals.
Unless noted otherwise, proofs can be found in~\cite{Blass}.
\begin{fact}\label{fact:blass}
\begin{enumerate}
  \item In~\cite{MSpt} it was proved that $\pfrak=\tfrak$.\footnote{Only the trivial inequality $\pfrak\leq\tfrak$ is used in this text.}
  \item The cardinals $\addN$, $\addM$, $\bfrak$, $\tfrak$, $\hfrak$ and $\gfrak$ are regular.
  \item $\cf(\sfrak)\geq\tfrak$ (see \cite{DowShelah}).
  \item $2^{{<}\tfrak}=\cfrak$.
  \item $\cf(\cfrak)\geq\gfrak$.
  \item For $1\leq k\leq k'\leq\omega$, $\mfrak_k\leq\mfrak_{k'}$.
  \item\label{mart} For $1\leq k\leq\omega$, $\mfrak_k>\aleph_1$ implies $\mfrak_k=\mfrak_\omega$ (well-known, but see e.g.\ \cite[Lemma~4.2]{GKMS1}).
\end{enumerate}
\end{fact}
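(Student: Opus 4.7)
The plan is to address the seven items separately, leveraging the literature wherever possible.

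Items (1) and (3) are the deep theorems of Malliaris--Shelah and Dow--Shelah; I would simply quote them, noting that only the trivial direction $\pfrak \le \tfrak$ (any $\subseteq^*$-decreasing tower is a filter base with no pseudo-intersection) is actually used in the rest of the paper. Item (7) I would invoke directly via \cite[Lemma~4.2]{GKMS1} as already supplied by the authors.

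For the regularity claims in (2), the same template handles all six cardinals: assuming $\cf(\xfrak) < \xfrak$, one cofinally refines a witness of size $\xfrak$ to produce a witness of size $\cf(\xfrak)$, contradicting the minimality of $\xfrak$. Concretely, for $\addN$ and $\addM$ this uses $\sigma$-completeness of $\Null$ and $\Meager$; for $\bfrak$, that a cofinal subfamily of an unbounded family is still unbounded; for $\tfrak$, that a $\subseteq^*$-cofinal sub-tower is again a tower (any pseudo-intersection of the sub-tower is a pseudo-intersection of the full tower); and for $\hfrak$ and $\gfrak$, that fewer than $\xfrak$ dense (respectively groupwise dense) sets still have nonempty intersection by the very definition of the cardinal. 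For (4), I combine (1) with the classical $2^{<\pfrak}=\cfrak$ (proved by diagonalizing against filters to shrink $\pfrak$ if $2^\kappa>\cfrak$ for some $\kappa<\pfrak$) to obtain $2^{<\tfrak}=\cfrak$. For (6), a $k'$-linked set is automatically $k$-linked for $k\le k'$, so the class of $k'$-Knaster forcings is a subclass of $k$-Knaster forcings, and taking the infimum of $\mfrak(\Pwf)$ over a larger class of $\Pwf$ yields the smaller value $\mfrak_k\le\mfrak_{k'}$. For (5), I would cite the classical argument in \cite{Blass}, whose idea is to encode a cofinal sequence $\kappa_\alpha\uparrow\cfrak$ into groupwise dense sets $D_\alpha$ so that a common element of the $D_\alpha$ would code all of $2^\omega$ via fewer than $\cf(\cfrak)$ steps.

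The expected main obstacle, were one to prove everything from scratch, would be (1) and (3), which are genuinely deep; as handled here they are cited black boxes. Among the remaining items the only one with any real combinatorial content is (5), whose full proof is referenced in \cite{Blass}; the rest reduce to short definitional chases or inclusion-of-classes observations.
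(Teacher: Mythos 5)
The paper does not actually prove Fact~1.2; it simply declares \emph{``Unless noted otherwise, proofs can be found in \cite{Blass}''} and points to \cite{MSpt}, \cite{DowShelah}, \cite{GKMS1}, so your proposal is the only place these arguments are spelled out. Your decomposition matches the intended references and the folklore arguments, with a few one-line justifications stated imprecisely. For the regularity of $\bfrak$, it is not true that a cofinal subfamily of an \emph{arbitrary} unbounded family is unbounded; one must first replace the witness by a $\le^*$-increasing scale of length $\bfrak$ (possible because every proper initial segment, having size $<\bfrak$, is bounded), and then a cofinal subfamily of that scale --- equivalently the family of bounds of initial segments --- is the desired small witness. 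For $\hfrak$, what is needed is not merely that $<\hfrak$ dense sets meet in a nonempty set, but that their intersection is \emph{again dense} (the $<\hfrak$-distributivity formulation); otherwise grouping $\hfrak$-many dense sets into $\cf(\hfrak)$-many blocks does not produce dense sets and the cofinal refinement collapses. Finally, for item (4) you have the logical dependence backwards: $2^{<\tfrak}=\cfrak$ is the classical, elementary Rothberger-type fact, and $2^{<\pfrak}=\cfrak$ is its easy corollary via $\pfrak\le\tfrak$; invoking the deep Malliaris--Shelah equality $\pfrak=\tfrak$ to pass from a statement about $\pfrak$ to one about $\tfrak$ is both unnecessary and anachronistic.
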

This work contributes to the 
project of constructing a forcing model satisfying:
\begin{equation}\tag{$\heartsuit$}\label{eq:maingoal}
    \text{All the cardinals in Figure~\ref{fig:all20} are pairwise different,}
\end{equation} with the obvious (ZFC provable) exception of the dependent entries $\addM=\min\{\bfrak,\covM\}$ and $\cofM=\max\{\nonM,\dfrak\}$, and the Martin axiom numbers $\mfrak$, $\mfrak_k$ for some $2\le k<\omega$,
and $\mfrak_\omega$, which can not have more than one value ${>}\aleph_1$, see Fact~\ref{fact:blass}(\ref{mart}).

In this direction \cite{GKS} constructed a forcing model, using four strongly compact cardinals, where all the ten (non-dependent) values of Cicho\'n's diagram are pairwise different (a situation we call \emph{Cicho\'n's Maximum}), as in Figure~\ref{fig:cichonorders}(A). This was improved later in~\cite{diegoetal} by only using three strongly compact cardinals; finally
in~\cite{GKMS2} it was shown that no large cardinals are needed for Cicho\'n's Maximum.

A model of Cicho\'n's Maximum with the order as in Figure~\ref{fig:cichonorders}(B) was obtained in~\cite{KeShTa:1131}. Although this model initially required four strongly compact cardinals as well, the methods of~\cite{GKMS2} allow to remove the large cardinal assumptions also here.

As a next step towards~\eqref{eq:maingoal}, \cite{GKMS1} proved:

\begin{theorem}[{\cite{GKMS1}}]\label{GKMSmain}
   Under $\mathrm{GCH}$, for any $k\in[1,\omega)$,
   there is a cofinality preserving poset  $\Por_{k}$ forcing that
   \begin{enumerate}[(a)]
     \item Cicho\'n's Maximum holds with the order of Figure~\ref{fig:cichonorders}\textnormal{(\textsc{a})}.
     \item $\aleph_1=\mfrak_{k-1}<\mfrak_k=\mfrak_\omega<\pfrak<\hfrak<\addN$ (recall $\mfrak_0:=\aleph_1$).
   \end{enumerate}
   An analogous result holds for the alternative order of Figure~\ref{fig:cichonorders}\textnormal{(\textsc{b})}.
\end{theorem}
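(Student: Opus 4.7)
The plan is to build $\Por_k$ as a finite support (FS) iteration of $k$-Knaster ccc forcings over a model of GCH, combining the Goldstern--Kellner--Shelah template for Cicho\'n's Maximum with preparatory segments that separate the additional small characteristics. Fix regular target cardinals
\[
\aleph_1 < \lambda_\mfrak < \lambda_\pfrak < \lambda_\hfrak < \lambda_0 < \lambda_1 < \dots < \lambda_8
\]
intended as the values of $\mfrak_k$, $\pfrak$, $\hfrak$ and the nine non-dependent Cicho\'n characteristics, in the order of Figure~\ref{fig:cichonorders}(A); GCH provides the necessary cardinal arithmetic.

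First, I would require every iterand to be $k$-Knaster. Since FS iterations of $k$-Knaster posets are $k$-Knaster (standard $\Delta$-system argument), $\Por_k$ itself is $k$-Knaster. An MA-type bookkeeping anticipating all $k$-Knaster names of size ${<}\lambda_\mfrak$ then forces $\mfrak_k \geq \lambda_\mfrak$, and Fact~\ref{fact:blass}(\ref{mart}) upgrades this to $\mfrak_k = \mfrak_\omega$. For $\mfrak_{k-1} = \aleph_1$, include in the ground model a $(k-1)$-Knaster, non-$k$-Knaster poset $\Qor^*$ (arising, e.g., from a coherent uncountable graph of chromatic number~$k$) together with $\aleph_1$ dense sets having no compatible generic filter; the $k$-Knasterness of every iterand suffices to preserve this counterexample through $\Por_k$.

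The intermediate values $\pfrak$ and $\hfrak$ are produced by dedicated preparatory sub-iterations. For $\pfrak = \lambda_\pfrak$, iterate all $\sigma$-centered posets of size ${<}\lambda_\pfrak$ (MA for $\sigma$-centered), which are automatically $k$-Knaster. For $\hfrak = \lambda_\hfrak$, iterate Mathias-type forcings driven by a base matrix of width $\lambda_\hfrak$; these forcings are $\sigma$-centered below a filter, hence $k$-Knaster. The nine Cicho\'n characteristics are then separated by the GKS technology from~\cite{GKS,diegoetal,GKMS2}, whose iterands (refined random, eventually-different, Laver-like, etc.) are $\sigma$-linked and can be combined with the above blocks inside a single matrix iteration. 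Upper bounds for $\pfrak,\hfrak$ and for each $\lambda_i$ are obtained from the usual cardinal-arithmetic ceiling of the iteration and from generic reals added cofinally by the corresponding blocks.

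The main obstacle is simultaneous preservation: every characteristic needs both an upper bound (no added real violates it) and a lower bound (enough generic reals witness smallness). The GKS suite involves several Tukey-connected preservation properties (for $\addN$, $\covN$, $\nonM$, $\dfrak$, \ldots), and one must additionally preserve the $(k-1)$-Knaster witness of $\mfrak_{k-1}$, a base matrix calibrating $\hfrak$, and a filter base without pseudo-intersection witnessing $\pfrak$. Packaging these into a single coherent FS iteration in which every iterand is $k$-Knaster \emph{and} respects all the required preservation properties is the bulk of the work, carried out through an enriched matrix iteration extending the template of the earlier GKS/Mej\'ia constructions. The analogous statement for Figure~\ref{fig:cichonorders}(B) is obtained by substituting the variant of the GKS iteration for that order, with no changes to the $\mfrak_k, \pfrak, \hfrak$ blocks.
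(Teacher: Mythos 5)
The proof you propose does not match the actual route and, more importantly, has a genuine gap: it does not explain how to separate the characteristics on the \emph{right} side of Cicho\'n's diagram ($\covM < \dfrak < \nonN < \cofN < \cfrak$, together with $\nonM < \cofM$) without large cardinals. You invoke ``the GKS technology from~\cite{GKS,diegoetal,GKMS2}'' as a family of iterands (random, eventually-different, etc.) to drop into a single matrix-style FS iteration, but this is precisely where the difficulty lies. A direct FS ccc iteration with $\sigma$-linked or small iterands separates only the left side; in such an iteration the right-side values come out equal to $\cfrak$. The original separation of the full Cicho\'n diagram in~\cite{GKS} only worked by applying a Boolean-ultrapower elementary embedding to the resulting ccc poset, using four strongly compact cardinals, and~\cite{diegoetal} still used three. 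The crux of~\cite{GKMS2,GKMS1} (whose method the statement in question belongs to) is the replacement of large cardinals by the \emph{intersection-with-elementary-submodels} technique: one first constructs a ccc poset $\Por^0$ forcing the left-side constellation with $\covM=\cfrak$ (our Theorem~\ref{thm:step3}), and then passes to $\Por^0\cap M^+$ for a suitably nested intersection $M^+$ of ${<}\theta_i$-closed elementary submodels of size $\theta_i$ (as in Lemma~\ref{lem:mlike}, applied in the proof of Theorem~\ref{mainstep2}). This second step, not any additional iterands, is what drives $\cofN,\nonN,\dfrak,\covM$ down below $\cfrak$. Your proposal omits this entirely.

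Two secondary points. First, the control of $\hfrak$ and $\gfrak$ in this framework is likewise a \emph{post hoc} submodel argument (Lemma~\ref{lem:gfrak}, taken from~\cite[Lemma~6.3]{GKMS1}), not an explicit base-matrix Mathias block placed inside the iteration; and the final value of $\pfrak$ is achieved by multiplying the whole ccc poset with a ${<}\mu_\pfrak$-closed forcing $\Qor=\mu_\pfrak^{{<}\mu_\pfrak}$ (Theorem~\ref{mainfinal}), rather than by an internal MA-type block. Second, for $\aleph_1=\mfrak_{k-1}<\mfrak_k$ you do not need to preserve a frozen ground-model counterexample; by Fact~\ref{fact:blass}(\ref{mart}), $\mfrak_{k-1}<\mfrak_k$ already forces $\mfrak_{k-1}=\aleph_1$, and in the actual construction the $(k{-}1)$-Knaster non-$k$-Knaster witness is manufactured en route (cf.~\cite[\S4]{GKMS1}). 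So your local heuristics are reasonable, but the overall architecture you describe---one monolithic $k$-Knaster FS iteration that simultaneously separates all fifteen values---is not viable and is not what the cited theorem's proof does.
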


In this paper, we continue this line of work by including, in addition, $\sfrak$ and~$\rfrak$.

\begin{mainthm}
   Under $\mathrm{GCH}$, for any $k\in[2,\omega)$ there is a cofinality preserving poset forcing that the cardinals in Cicho\'n's diagram, $\mfrak_k$, $\pfrak$, $\hfrak$, $\sfrak$ and $\rfrak$ are pairwise different. More specifically:
   \begin{enumerate}[(a)]
      \item Cicho\'n's Maximum holds, in either of the orders of Figure~\ref{fig:cichonorders}.
      \item $\aleph_1=\mfrak_{k-1}<\mfrak_k=\mfrak_\omega<\pfrak<\hfrak<\addN$.
      \item $\sfrak$ can assume any regular value between $\pfrak$ and $\bfrak$.
      \item $\rfrak$ can assume any regular value in the dual position to $\sfrak$. E.g., if $\sfrak<\addN$, then $\rfrak$ can be any arbitrary regular in $[\cofN,\cfrak]$ (see details in Section~\ref{sec:15}).
   \end{enumerate}
\end{mainthm}

In both theorems above, item (b) can also be replaced by $\aleph_1<\mfrak_\omega<\pfrak<\hfrak<\addN$ while $\mfrak_k=\aleph_1$ for all $k<\omega$. Those are the only possible constellations of the Knaster numbers, by Fact~\ref{fact:blass}(\ref{mart}), unless you count $\mfrak$ as the $1$-Knaster-number:
In contrast to Theorem~\ref{GKMSmain} (where we do not control $\rfrak,\sfrak$),
we cannot force $\mfrak>\aleph_1$ with the methods we use here.
We cannot just iterate over all small ccc forcings one by one
to increase $\mfrak$, as our method requires
that all iterands of the forcing iteration have to be ``homogeneous''.
So instead of using a certain small forcing $\dot Q$ as iterand, we will use
a finite support product over all variants as iterand.
So only if $\dot Q$ (and therefore all variants) is Knaster,\footnote{Or at least stays ccc in ccc extensions.} this product can be used
in a ccc iteration; accordingly we can increase the Knaster numbers but not $\mfrak$ itself.

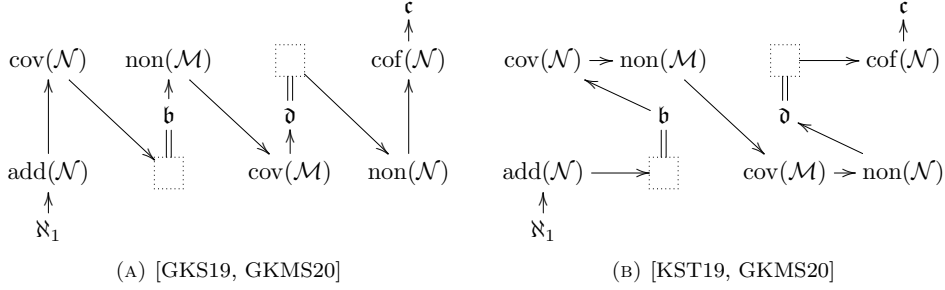
\begin{figure}
\subfloat[\cite{GKS,GKMS2}]
{\resizebox{0.48\textwidth}{!}{$
\xymatrix@=2ex{
&
&
&\cfrak
\\
\covN\ar[ddr]
&\nonM\ar[ddr]
&\mye\ar[ddr]
&\cofN\ar[u]
\\
&
\mathfrak b\ar[u]
&\mathfrak d\ar@{=}[u]
\\
\addN\ar[uu]
&\mye\ar@{=}[u]
&\covM\ar[u]
&\nonN\ar[uu]
\\
\aleph_1\ar[u]
}
$}}
\hspace*{\fill}
\subfloat[\cite{KeShTa:1131,GKMS2}]
{\resizebox{0.48\textwidth}{!}{$
\xymatrix@=2ex{
&
&
&\cfrak
\\
\covN\ar[r]
&\nonM\ar[ddr]
&\mye\ar[r]
&\cofN\ar[u]
\\
&
\mathfrak b\ar[ul]
&\mathfrak d\ar@{=}[u]
\\
\addN\ar[r]
&\mye\ar@{=}[u]
&\covM\ar[r]
&\nonN\ar[ul]
\\
\aleph_1\ar[u]
}
$}}
    \caption{\label{fig:cichonorders}The two known consistent orders where all the (non-dependent) values in Cicho\'n's diagram are pairwise different. (A) corresponds to the model in~\cite{GKS}, and (B) to the model in~\cite{KeShTa:1131} (both proven consistent in~\cite{GKMS2} without large cardinals). Each arrow can be $<$ or $=$ as desired.}
\end{figure}

We remark that the full power of GCH is not required in the Main Theorem, but we do need \emph{some} assumption on cardinal arithmetic in the ground model.
See details in Section~\ref{sec:15}.

In order to include $\sfrak$ and $\rfrak$ in our main result, we need a new preservation theorem for splitting families. Previously, the following was known in the context of FS (finite support) iterations:
\begin{itemize}
  \item[\cite{baudor}] Hechler forcing (for adding a dominating real) preserves splitting families witnessing the property $\LCU_{\Rbsp}(\kappa)$ for any uncountable regular $\kappa$ (see Section~\ref{sec:COB}).
  \item[\cite{JSsuslin}] Assuming CH, any FS iteration of Suslin ccc posets forces that the ground model reals form a splitting family.
\end{itemize}
In this paper we will use a splitting family obtained by a FS product
of Hechler-type posets (cf.~\cite{Hechlermad}) which we call $\Gor_\Bbf$;
the support of $\Gor_\Bbf$ is a graph $\Bbf$ of size $\aleph_1$ with certain homogeneity properties.
We then show that this
splitting family is preserved by certain FS iterations, which we will call ``symmetric Suslin-$\lambda$-small''. (Every FS iteration of Suslin ccc posets with parameters in the ground model is such an iteration, but our application will not use such ``full'' Suslin ccc forcings.)

Similar preservation techniques have appeared in different contexts. For instance, concerning preservation of mad (maximal almost disjoint) families, Kunen~\cite{kunen80} constructed, under CH, a mad family that can be preserved by Cohen posets; afterwards, Steprans~\cite{steprans} showed that, after adding $\omega_1$-many Cohen reals, there is a mad family of size $\aleph_1$ that can be preserved in further Cohen extensions; Fischer and Brendle~\cite{VFJB11} constructed a Hechler-type poset $\Hor_A$ with support (any uncountable set) $A$ that adds a mad family indexed by $A$, which can be preserved not only in further Cohen extensions but after other concrete FS iterations, thus generalizing Steprans' result because $\Hor_{\omega_1}=\Cor_{\omega_1}$; \cite{FFMM,mejiavert} showed that any such mad family added by $\Hor_A$ can be preserved by some general type of FS iterations, but the most general result so far was shown in~\cite{diegoetal}: Any $\kappa$-$\mathrm{Fr}$-Knaster poset preserves $\kappa$-strong-$\mathbf{Md}$-families (with $\kappa$ uncountable regular; the mad family added by $\Hor_\kappa$ is of such type).

There are deep technical differences between the mad family added by this $\Hor_A$,
and the construction of a splitting family in this paper: No structure is needed on $A$, and because of this it is clear that Hechler's posets satisfy  $\Hor_A\lessdot\Hor_B$ whenever $A\subseteq B$; but we cannot guarantee $\Gor_{\Bbf_0}\lessdot\Gor_\Bbf$ for our posets, whenever  $\Bbf_0$ is a  subgraph of~$\Bbf$. Also, $\Gor_\Bbf$ itself does not add a splitting family, but it just adds a set of Cohen reals $\{\eta_a : a\in \Bbf\}$ \emph{over the ground model} (recall that we do not have intermediate extensions by restricting the support $\Bbf$). Hence, the FS product (or iteration, which is the same, as the poset $\Gor_\Bbf$ is absolute) of size $\kappa$ of such posets adds a splitting family of size $\kappa$ (witnessing $\LCU_{\Rbf}(\kappa)$) formed by the previously mentioned Cohen reals. It is clear that just adding $\kappa$ many Cohen reals produces a splitting family satisfying $\LCU_{\Rbf}(\kappa)$, but we need to use FS support products of
$\kappa$ many $\Gor_\Bbf$ (with $\Bbf$ of size $\aleph_1$, instead of just one $\Gor_{\Bbf'}$ with $\Bbf'$ of size $\kappa$), and we need the graph structure on $\Bbf$, to be able to guarantee the preservation of the new splitting family. The forcing structure is very important here because an isomorphism of names argument is required for this preservation.

The strategy to prove the main theorem is similar to Theorem~\ref{GKMSmain}. We first show how to construct a ccc poset that forces distinct values for the cardinals on the left side of Cicho\'n's diagram, including some of the other cardinal characteristics (like $\sfrak$ in this case). Afterwards, methods from~\cite{GKMS2,GKMS1} are applied to this initial forcing to get the poset for the main theorem.

\goodbreak
\subsection*{Acknowledgment}
We thank Cezar Port for pointing out a mistake in the published version of the proof of Theorem~\ref{SbGexists} (which is fixed in this version).

\subsection*{Annotated contents.}
\ \smallskip

\noindent\textbf{\S\ref{sec:graph}} We show how to construct, in ZFC, a \emph{suitable $2$-graph}. This is the type of graph we use as support for $\Gor_\Bbf$.\smallskip

\noindent\textbf{\S\ref{sec:COB}} The $\LCU$ and $\COB$ properties are reviewed from \cite{GKS,GKMS2,GKMS1}. These describe \emph{strong witnesses} to cardinal characteristics associated with a definable relation on the reals. Examples of such cardinal characteristics  are the Cicho\'n-characteristics as well as $\sfrak$ and $\rfrak$.
\smallskip

\noindent\textbf{\S\ref{sec:splpres}} We introduce the forcing $\Gor_\Bbf$, which has as support a suitable $2$-graph $\Bbf$.
We look at FS iterations of ccc posets, in general, whose initial part is a FS product of posets of the form $\Gor_\Bbf$ where $\Bbf$ is
in the ground model.
We define \emph{$\lambda$-small history iterations} (where on a dense set,
conditions have ${<}\lambda$-sized
\emph{history}), as well as \emph{symmetric} iterations, and show
that symmetric $\lambda$-small history iterations allow us to control $\sfrak$
(and later also $\rfrak$).
\smallskip

\noindent\textbf{\S\ref{sec:main1}} We define \emph{Suslin $\lambda$-small} iterations, which are $\lambda$-small history iterations, and give consequences of this notions, as well as sufficient conditions
to get symmetric ones.
\smallskip

\noindent\textbf{\S\ref{sec:left}}
Closely following~\cite{GKS}, we construct a symmetric Suslin-$\lambda$-small iteration $\Por^0$ that separates the cardinals on the left hand side of the diagram, with $\covM=\cfrak$ and $\sfrak=\pfrak$.\smallskip

\noindent\textbf{\S\ref{sec:15}} We show how the tools of~\cite{GKMS2,GKMS1}
can applied to $\Por^0$, resulting
in a forcing that gives the main theorem.\smallskip

\noindent\textbf{\S\ref{sec:disc}} We discuss some open questions related to this work.

\section{Suitable 2-graphs}\label{sec:graph}

In this section we
define and construct suitable $2$-graphs.

\begin{definition}\label{DefSbG}
   Say that $\Bbf:=\la B,R_0,R_1\ra$ is a \emph{$2$-edge-labeled graph}, abbreviated \emph{$2$-graph} or \emph{bi-graph}, if
   \begin{enumerate}[(i)]
       \item $R_0$ and $R_1$ are irreflexive symmetric relations on $B$,
       \item $R_0\cap R_1=\emptyset$.
   \end{enumerate}
   In other words: Between two nodes $x$ and $y$ there is at most one edge, with color $0$ or $1$. 

   Concerning $2$-graphs, we define the following notions.
   \begin{enumerate}[(1)]
       \item If $A\subseteq B$, denote $\Bbf|_A:=\la A,R_0|_A,R_1|_A\ra$ where $R_e|_A:=R_e\cap(A\times A)$.

       \item A partial function (or \emph{coloring}) $\eta$ from $B$ into $2$ \emph{respects $\Bbf$} if $\{\eta(a),\eta(b)\}\neq\{e\}$ whenever $e\in2$, $a,b\in \dom\eta$ and $aR_e b$.
   \end{enumerate}

   The $2$-graph of Figure~\ref{fig:2graphcolor} does not have a coloring (with full domain) respecting it.


   For an infinite cardinal $\lambda$, we say that $\Bbf$ is a \emph{suitable bi-graph of size $\lambda$} if   
   \begin{enumerate}[(i)]
   \setcounter{enumi}{2}
       \item for each $e\in\{0,1\}$, there is a $W_e\subseteq B$ such   that $W_e$ is a complete $R_e$-graph (as a subgraph of $\la B,R_e\ra$),
        \item $W_0\cap W_1 =\emptyset$ and $|B|=|W_0|=|W_1|=\lambda$,
        \item for any $a\in B$ and $e\in\{0,1\}$, there is a coloring $\eta\colon B\to \{0,1\}$ respecting $\Bbf$ such that $\eta(a) =e$, and
        \item for any $a,b\in B$, there is some automorphism $f$ on $\Bbf$ sending $a$ to $b$.
   \end{enumerate}
   Any suitable $2$-graph of size $\aleph_1$ is just called \emph{suitable $2$-graph (S2G)}.
\end{definition}



\begin{figure}
  {\fontsize{11pt}{14pt}\selectfont
\begingroup%
  \makeatletter%
  \providecommand\color[2][]{%
    \errmessage{(Inkscape) Color is used for the text in Inkscape, but the package 'color.sty' is not loaded}%
    \renewcommand\color[2][]{}%
  }%
  \providecommand\transparent[1]{%
    \errmessage{(Inkscape) Transparency is used (non-zero) for the text in Inkscape, but the package 'transparent.sty' is not loaded}%
    \renewcommand\transparent[1]{}%
  }%
  \providecommand\rotatebox[2]{#2}%
  \newcommand*\fsize{\dimexpr\f@size pt\relax}%
  \newcommand*\lineheight[1]{\fontsize{\fsize}{#1\fsize}\selectfont}%
  \ifx\svgwidth\undefined%
    \setlength{\unitlength}{139.03945442bp}%
    \ifx\svgscale\undefined%
      \relax%
    \else%
      \setlength{\unitlength}{\unitlength * \real{\svgscale}}%
    \fi%
  \else%
    \setlength{\unitlength}{\svgwidth}%
  \fi%
  \global\let\svgwidth\undefined%
  \global\let\svgscale\undefined%
  \makeatother%
  \begin{picture}(1,0.46303914)%
    \lineheight{1}%
    \setlength\tabcolsep{0pt}%
    \put(0,0){\includegraphics[width=\unitlength,page=1]{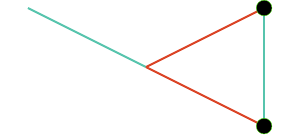}}%
    \put(0.61479296,0.05713701){\color[rgb]{0.85882353,0.2627451,0.14509804}\makebox(0,0)[lt]{\lineheight{1.25}\smash{\begin{tabular}[t]{l}0\end{tabular}}}}%
    \put(0.61375627,0.33710249){\color[rgb]{0.85882353,0.2627451,0.14509804}\makebox(0,0)[lt]{\lineheight{1.25}\smash{\begin{tabular}[t]{l}0\end{tabular}}}}%
    \put(0.2947252,0.33965838){\color[rgb]{0.34117647,0.76862745,0.67843137}\makebox(0,0)[lt]{\lineheight{1.25}\smash{\begin{tabular}[t]{l}1\end{tabular}}}}%
    \put(0.92175668,0.18371614){\color[rgb]{0.34117647,0.76862745,0.67843137}\makebox(0,0)[lt]{\lineheight{1.25}\smash{\begin{tabular}[t]{l}1\end{tabular}}}}%
    \put(-0.00948191,0.18051195){\color[rgb]{0.85882353,0.2627451,0.14509804}\makebox(0,0)[lt]{\lineheight{1.25}\smash{\begin{tabular}[t]{l}0\end{tabular}}}}%
    \put(0,0){\includegraphics[width=\unitlength,page=2]{graph4squashed.pdf}}%
    \put(0.29468569,0.05663461){\color[rgb]{0.34117647,0.76862745,0.67843137}\makebox(0,0)[lt]{\lineheight{1.25}\smash{\begin{tabular}[t]{l}1\end{tabular}}}}%
  \end{picture}%
\endgroup%
}
  \caption{A finite $2$-graph which cannot be respected by any coloring.}
  \label{fig:2graphcolor}
\end{figure}

Properties (iii), (iv) and (vi) imply for all $b\in B$ and $e\in\{0,1\}$:
\begin{equation}\label{eq:S2G}
    \text{$b$ is contained in an uncountable $R_e$-complete subgraph of $\Bbf$.}
\end{equation}

In the rest of this section we will show that suitable graphs of any infinite size exist.



\begin{remark}

     In our applications, we only need the following weakening of property (v): for any $t\in[B]^{<\aleph_0}$, $a\in t$ and $e\in 2$, there is some $\eta :t\to 2$ that respects $\Bbf$ such that $\eta(a)=e$. The only place where (the weakening of) (v) is used is in the proof of Lemma~\ref{GBprop1}(b). However, this weakening is equivalent to (v) itself by compactness.
\end{remark}

\begin{definition}\label{f11}
    Let $\Bbf=\la B,R_0,R_1\ra$ be a bi-graph.
    \begin{enumerate}[(1)]
        \item A \emph{path in $\Bbf$} is a finite sequence $\seq{b_j}{j\leq n}$ of elements of $B$ such that, for $j<n$, there is some $e\in\{0,1\}$ such that $b_j \mathrel{R}_e b_{j+1}$. For $a,b\in B$, a \emph{path from $a$ to $b$ in $\Bbf$} is a path in $\Bbf$ as above such that $b_0=a$ and $b_n=b$.
        \item If $\Bbf' = \la B',R'_0,R'_1\ra$ is another bi-graph, then $\Bbf\preceq^-\Bbf'$ means that $\Bbf$ is a substructure of $\Bbf'$ and there is no path $\seq{b_j}{j\leq n}$ in $\Bbf'$ with $n\geq 2$ such that $\set{b_j}{j\leq n}\cap B = \{b_0,b_n\}$ and $b_0\neq b_n$.
    \end{enumerate}
\end{definition}

\begin{lemma}\label{f20}
    The relation $\preceq^-$ is a partial order in the class of bi-graphs. 
    Even more, if $\seq{\Bbf^\alpha = \la B^\alpha,R^\alpha_0,R^\alpha_1\ra}{\alpha<\gamma}$ is a $\preceq^-$-increasing sequence, then 
    \[\Bbf^\gamma := \left\la \bigcup_{\alpha <\gamma}B^\alpha,\bigcup_{\alpha <\gamma}R^\alpha_0, \bigcup_{\alpha <\gamma}R^\alpha_1 \right\ra\] 
    is the $\preceq^-$-supremum of $\set{\Bbf^\alpha}{\alpha<\gamma}$.
\end{lemma}
\begin{proof}
    We show that $\preceq^-$ is transitive (the rest is straightforward). So assume that $\Bbf^0\preceq^- \Bbf^1$ and $\Bbf^1\preceq^- \Bbf^2$ and, towards a contradiction, that there is a path $\seq{b_j}{j\leq n}$ in $\Bbf^2$ with $n\geq 2$ such that $\set{b_j}{j\leq n}\cap B^0 = \{b_0,b_n\}$ and $b_0\neq b_n$. This path may  contain cycles of the form $\seq{b_j}{j\in[j_0,j_1]}$ for some $j_0<j_1$ such that $\set{b_j}{j\in[j_0,j_1]}\cap B_1 = \{b_{j_0},b_{j_1}\}=\{b_{j_0}\}$ (i.e\ $b_{j_0}= b_{j_1}$). By removing such loops (and leaving the points in $B_1$), we may assume that the path $\seq{b_j}{j\leq n}$ does not contain such loops. 
    
    Since $\Bbf^0\preceq^- \Bbf^1$, we must have that the path $\seq{b_j}{j\leq n}$ intersects $B^2\smallsetminus B^1$, so we can find $j'_0<j'_1$ such that $\set{b_j}{j\in[j'_0,j'_1]}\cap B_1 = \{b_{j'_0},b_{j'_1}\}$ and $b_{j'_0}\neq b_{j'_1}$, which contradicts $\Bbf^1\preceq^- \Bbf^2$.
\end{proof}

\begin{theorem}\label{SbGexists}
    For any infinite cardinal $\lambda$, there is a suitable bi-graph of size $\lambda$.
\end{theorem}
\begin{proof}
    Our construction of the graph relies on the partial order $P$ whose elements are tuples $p=\la B^p,R^p_0, R^p_1,C^p,\bar f^p,\bar \eta^p\ra$ satisfying:
    \begin{enumerate}[({C}1)]
        \item $\Bbf^p := \la B^p,R^p_0,R^p_1\ra$ is a bi-graph with $B^p\subseteq \lambda^+$ of size ${\leq}\lambda$;
        \item $C^p\subseteq B^p\times B^p$;
        \item $\bar f^p = \seq{f^p_{a,b}}{(a,b)\in C^p}$ where each $f^p_{a,b}\colon D^p_{a,b}\to E^p_{a,b}$ is an isomorphism of substructures of $\Bbf^p$ such that $a\in D^p_{a,b}$, $\Bbf^p|_{D^p_{a,b}}\preceq^- \Bbf^p$, $b\in E^p_{a,b}$, $\Bbf^p|_{E^p_{a,b}}\preceq^- \Bbf^p$ and $f^p_{a,b}(a) = b$; and
        \item $\bar \eta^p = \seq{\eta^p_{a,e}}{a\in B^p,\ e\in\{0,1\}}$ such that each $\eta^p_{a,e}\colon B^p\to\{0,1\}$ is a coloring respecting $\Bbf^p$ sending $a$ to $e$;
    \end{enumerate}
    The order is defined by $q\leq p$ if
    \begin{enumerate}[({O}1)]
        \item $\Bbf^p\preceq^- \Bbf^q$,
        \item $C^p\subseteq C^q$,
        \item for each $(a,b)\in C^p$, $f^q_{a,b}$ extends $f^p_{a,b}$, and
        \item for $a\in B^p$ and $e\in\{0,1\}$, $\eta^q_{a,e}$ extends $\eta^p_{a,e}$.
    \end{enumerate}

    By recursion, we construct a decreasing sequence $\seq{p^\alpha}{\alpha\leq\lambda}$ in $P$, $p^\alpha:=\la B^\alpha,R^\alpha_0, R^\alpha_1,C^\alpha,\bar f^\alpha,\bar \eta^\alpha\ra$. Our desired graph will be $\Bbf^\lambda$.

    For the construction, we use a book-keeping bijection $h\colon \lambda \to 2\times {}^{6}\lambda$ such that, whenever $h(\alpha) = (i,t)$, $t_k\leq \alpha$ for all $k<6$. At each step $\alpha<\lambda$, we enumerate $B^\alpha = \set{b_{\alpha,\eta}}{\eta<\lambda}$.

    In the base step $\alpha=0$, pick disjoint subsets $W_0$ and $W_1$ of $\lambda^+$ of size $\lambda$ and define $\Bbf^0:=\la B^0,R^0_0,R^0_1\ra$ such that, for $e\in\{0,1\}$ and $a,b\in B^0$:
    \begin{itemize}
        \item $B^0 := W_0\cup W_1$,
        \item $a \mathrel{R}^0_e b$ iff $a\neq b$ are in $W_e$,
    \end{itemize}
    We set $C^0 := \emptyset$. For each $a\in B^0$ and $e\in\{0,1\}$, it is easy to find a desired coloring $\eta^0_{a,e}\colon B^0\to \{0,1\}$. Concretely,
    \[
     \eta^0_{a,e}(x) := \left\{
        \begin{array}{ll}
           e  &  x\in W_{1-e}\cup\{a\},\\
           1-e  & x\in W_e\smallsetminus \{a\}.
        \end{array}
     \right.    
    \]
    
     In the successor step $\alpha+1$, let $h(\alpha) = (i,t)$. We proceed by cases according to the value $i$.     

    Case $i=0$: Let $a_0:= b_{t_0,t_1}$, $a_1:= b_{t_2,t_3}$ and $c:= b_{t_4,t_5}$, which are in $B^\alpha$. We aim to construct a $p^{\alpha+1} \leq p^\alpha$ such that $(a_0,a_1)\in C^{\alpha+1}$ and $c\in D^{\alpha+1}_{a_0,a_1}$. 
    
    Define $p :=\la B,R_0,R_1,C,\bar f,\bar \eta\ra\in P$ as $p^\alpha$ but with $C:= C^\alpha\cup\{(a_0,a_1)\}$ and, in the case $(a_0,a_1)\notin C^\alpha$, let $f^p_{a_0,a_1}$ be the only function from $\{a_0\}$ onto $\{a_1\}$ (notice that singletons are $\preceq^-$-below a bi-graph). Clearly $p\in P$ and $p\leq p^\alpha$. In the case $c\in D^p_{a_0,a_1}$, let $p^{\alpha+1}:=p$. So assume that $c\notin D:= D^p_{a_0,a_1}$. 
    We say that $\seq{x_\ell}{\ell\leq n}$ is a \emph{path from $c$ to $D$} if it is a path in $\Bbf$ such that $x_0 = c$, $x_n\in D$ and $x_\ell \notin D$ when $0< \ell <n$. The point $x_n$ is called the \emph{end-point of the path}. Since $\Bbf|_D\preceq^- \Bbf$, any two paths from $c$ to $D$ must go to the same end-point, which we denote by $z^*$ if it exists (i.e.\ if a path from $c$ to $D$ exists). Let $D':= \{c\} \cup \set{x_\ell\in B\smallsetminus\{z^*\}}{\bar x \text{ is a path from }c \text{ to }D}$ and $D^*:=D\cup D'$. Let $E'\subseteq \lambda^+$ be a copy of $D'$ disjoint with $B$, $E^*:= E\cup E'$ where $E:= E^\alpha_{a_0,a_1}$, and let $f^*\colon D^*\to E^*$ be a bijection extending $f:=f^p_{a_0,a_1}$. Let $B^{\alpha+1}:= B\cup E'$ and let $\Bbf^{\alpha+1}$ be the smallest bi-graph expanding $\Bbf$ such that $f^*$ is an isomorphism of substructures. Also let $C^{\alpha+1}:= C$, $f^{\alpha+1}_{a,b}:= f^p_{a,b}$ for any $(a,b)\in C\smallsetminus \{(a_0,a_1)\}$, and $f^{\alpha+1}_{a_0,a_1}:= f^*$. 
    
    We have that $\Bbf|_{D^*}\preceq^- \Bbf\preceq^- \Bbf^{\alpha+1}$ and $\Bbf^{\alpha+1}|_{E^*}\preceq^- \Bbf^{\alpha+1}$. 
    To see $\Bbf|_{D^*}\preceq^- \Bbf$: if $\bar y=\seq{y_j}{j\leq n}$ is a path in $\Bbf$ from $y_0\in D^*$ to $y_n\in D^*$ and $y_0\neq y_n$ then:
    in the case $y_0,y_n\in D$, $y_j\in D$ for some $0<j<n$ because $\Bbf|_D\preceq^- \Bbf$; otherwise, the path can be extended (on both sides) to a path from $c$ to $D$, so it lies inside $D^*$.

    Observe that, whenever $x\in B$, $y\in E' = B^{\alpha+1}\smallsetminus B$ and $x \mathrel{R}_e y$ for some $e$, $x=f(z^*)$. This implies $\Bbf\preceq^- \Bbf^{\alpha+1}$ and $\Bbf^{\alpha+1}|_{E^*}\preceq^- \Bbf^{\alpha+1}$ (also because $f(z^*) \in E$ and $\Bbf|_E\preceq^- \Bbf$).

    We now define the colorings $\eta^{\alpha+1}_{a,e}$ for $a\in B^{\alpha+1}$ and $e\in\{0,1\}$. When $a\in B$, extend $\eta_{a,e}$ by defining $\eta^{\alpha+1}_{a,e}(f^*(x)):=\eta_{z^*,\eta_{a,e}(f(z^*))}(x)$ for $x\in D'$ in case $z^*$ exists (otherwise $E' = \{f^*(c)\}$ and $f^*(c)$ does not have neighbors, so it can be colored arbitrarily). When $a = f^*(z)$ for some $z\in D'$, define
    \[\eta^{\alpha+1}_{a,e}(y) := \left\{
      \begin{array}{ll}
          \eta_{z,e}(x) & \text{$y=f(x)$ for some $x\in D'$,} \\
          \eta_{f(z^*),\eta_{z,e}(z^*)}(y) &  y\in B
      \end{array}
    \right.\]
    in case $z^*$ exists (otherwise $z=c$, so $\eta_{f(z^*),\eta_{z,e}(z^*)}$ can be replaced by anything in $\bar \eta^\alpha$).
    All the above clearly defines a $p^{\alpha+1} \leq p$.

    Case $i=1$: Let $a_0:= b_{t_0,t_1}$, $a_1:= b_{t_2,t_3}$ and $d:= b_{t_4,t_5}$, which are in $B^\alpha$. Proceed similarly as in the previous proof to construct $p^{\alpha+1} \leq p^{\alpha}$ such that $(a_0,a_1)\in C^{\alpha+1}$ and $d\in E^{\alpha+1}_{a_0,a_1}$.

    In the limit step $\gamma\leq\lambda$, we let $p^\gamma$ be the infimum of $\set{p^\alpha}{\alpha<\gamma}$ in $P$. We show that this infimum exists (which shows that any decreasing sequence in $P$ of length ${<}\lambda^+$ has an infimum, i.e.\ $P$ is ${<}\lambda^+$-closed). Let $\Bbf^\gamma$ be the $\preceq^-$-supremum of $\set{\Bbf^\alpha}{\alpha<\gamma}$, $C^\gamma:=\bigcup_{\alpha<\gamma}C^\alpha$, $f^\gamma_{a,b}:= \bigcup_{\alpha<\gamma} f^{\alpha_{a,b}}$ and $\eta^\gamma_{a,e}:=\bigcup_{\alpha<\gamma} \eta^\alpha_{\alpha,e}$ (abusing notation in the last two unions, since formally the union starts at some $\alpha_0$ from where the functions start to appear), which defines this infimum $p^\gamma$.

    At the end, our book-keeping functions and the construction ensures that $C^\lambda = B^\lambda\times B^\lambda$ and $\dom f^\lambda_{a,b} = \ran f^\lambda_{a,b} = B^\lambda$ for all $(a,b)\in C^\lambda$. Hence, $\Bbf^\lambda$ is as desired.
\end{proof}

\begin{remark}
    In the previous proof, we could have argued a bit differently, using that the partial order $P$ is ${<}\lambda^+$-closed. In the successor step of the proof, we have actually proved that the following sets are dense in $P$ for $a,b,c\in \lambda^+$:
    \begin{align*}
        A^0_{a,b,c} & := \set{p\in P}{(a,b)\in C^p,\ c\in D^p_{a,b}},\\
        A^1_{a,b,c} & := \set{p\in P}{(a,b)\in C^p,\ c\in E^p_{a,b}}.
    \end{align*}
    Since $P$ is ${<}\lambda^+$-closed, we can find a filter $G\subseteq P$ with $p^0\in G$ intersecting all the dense sets above. By taking unions of the components of the members of the filter (similar to the limit step of the proof above), we obtain a bi-graph $\Bbf^+ = \la \lambda^+,R_0,R_1\ra$ satisfying the conditions of a suitable bi-graph of size $\lambda^+$, with the exception that $W_0$ and $W_1$ have size $\lambda$. Now, pick a large enough regular cardinal $\chi$ and an elementary submodel $N\preceq H_\chi$ of size $\lambda$ with $\lambda\cup\{\Bbf^+\}\subseteq N$. Then, $\Bbf:=\Bbf^+|_{\lambda^+\cap N}$ is a suitable bi-graph of size $\lambda$.
\end{remark}

\section{Cardinal Characteristics, COB and LCU}\label{sec:COB}

Many classical characteristics can be defined by the framework of relational systems
as in e.g.~\cite{MR1234291,Blass}.
Say that $\Rbf:=\la X,Y,R\ra$ is a \emph{relational system} if $X$ and $Y$ are non-empty sets,
and $R$ is a relation.
The following cardinal characteristics are associated with $\Rbf$.\smallskip

$\dfrak(\Rbf):=\min\{|D|: D\subseteq Y\text{\ and }\forall x\in X\,\exists y\in D\,(x R y)\}$;\smallskip

$\bfrak(\Rbf):=\min\{|F|: F\subseteq X\text{\ and }\neg\exists y\in Y\,\forall x\in X\,(x R y) \}$.\medskip

In this work, we are particularly interested in relational systems $\Rbf$ such that
\begin{enumerate}[({RS}1)]
  \item $X$ and $Y$ are subsets of Polish spaces $Z_0$ and $Z_1$, respectively, and absolute for transitive models of ZFC (e.g. they are analytic);
  \item $R\subseteq Z_0\times Z_1$ is absolute for transitive models of ZFC (e.g.\ analytic in $Z_0\times Z_1$).
\end{enumerate}
When these properties hold we say that $\Rbf$ is a \emph{relational system of the reals}.
In all the cases explicitly mentioned throughout this paper, $X$ and $Y$ are Polish spaces themselves and $R$ is Borel in $X\times Y$. In this case, there is no problem to identify $X=Y=\omega^\omega$, and we call $\Rbf$,
or rather the characteristics $\bfrak(\Rbf)$
and $\dfrak(\Rbf)$, \emph{Blass-uniform} (cf.~\cite[\S2]{GKMS1}).

\begin{example}\label{exp:blassunif}(\cite[2.2.2]{MR1234291} or \cite[\S4 \& \S5]{Blass})
The splitting number $\sfrak$ and the reaping number $\rfrak$ are  Blass-uniform: Denote
$\Rbsp:=\la2^\omega,[\omega]^{\aleph_0},\Rsp\ra$ where $x\Rsp y$ iff $x{\upharpoonright}y$ is constant except in finitely many points of $y$. Then $\sfrak=\bfrak(\Rbsp)$ and $\rfrak=\dfrak(\Rbsp)$.\footnote{It would be more natural to consider the relational system $\la[\omega]^{\aleph_0},[\omega]^{\aleph_0},R\ra$ where $xRy$ iff either $x\supseteq^* y$ or $\omega\smallsetminus x\supseteq^* y$, but $\Rbsp$ is more suitable in our proofs. It is not hard to see that both relational systems are Tukey-equivalent.}

Also all Cicho\'n-characteristics are Blass-uniform. The Blass-uniform relational systems we use for these characteristics are (as in the Cicho\'n's Maximum constructions) in some instances slightly different from the ``canonical" ones. See e.g.~\cite[Ex.~2.16]{diegoetal},~\cite[Ex.~2.10]{modKST} and~\cite[\S1]{GKS} for the definition of the Blass-uniform relational systems corresponding to the Cicho\'n-characteristics.
\end{example}

As in \cite{GKMS2} we also look at relational systems $S=\la S,S,\leq\ra$ where $\leq$ is an upwards directed partial order on $S$. Here
$\cp(S):=\bfrak(S)$ is the \emph{completeness of $S$}, and $\cf(S):=\dfrak(S)$ is the \emph{cofinality of $S$}. Recall that, whenever $S$ has no greatest element, $\cp(S)\leq\cf(S)$, and equality holds when the order is linear.

The following is a very useful notion to calculate the value of cardinal characteristics (specially in forcing extensions).

\begin{definition}[cf.~{\cite[\S1]{GKS}}]\label{def:COB}
   Fix a directed partial order $S=\la S,\leq\ra$ and a relational system $\Rbf=\la X,Y,R\ra$. Define the property:\medskip

   \noindent\textbf{Cone of bounds.}\\
   $\COB_\Rbf(S)$ means:
   There is a family $\bar{y}=\{y_i:i\in S\}\subseteq Y$ such that
   \[\forall x\in X\, \exists i_x\in S\, \forall j\geq i_x\, (x Ry_j).\]

   When $L=\la L,\leq\ra$ is a linear order, we additionally define

   \noindent\textbf{Linear cofinally unbounded.}\\
   $\LCU_\Rbf(L)$ means:
   There is a family $\bar{x}=\{x_i : i\in L\}\subseteq X$ such that
   \[\forall y\in Y\,\exists i\in L\,\forall j\geq i\,(\neg(x_j R y)).\]
\end{definition}

In the following remarks we address very natural characterizations and consequences of these properties.

\begin{remark}[Tukey connections and $\COB$]\label{rem:COB}
   Let $\bar{y}$ be a witness of $\COB_\Rbf(S)$. By the definition of $\COB_\Rbf(S)$ we have that the functions $f:X\to S$ and $g:S\to Y$, defined by $f(x):=i_x$ and $g(i):=y_i$, form a Tukey connection from $\Rbf$ into $S$. So we conclude that
   \[\COB_\Rbf(S)\text{\ holds iff }\Rbf\leqT S,\]
   where $\leqT$ denotes the Tukey order.
\end{remark}

\begin{remark}[Duality and $\LCU$]\label{rem:LCU}
   Let $\Rbf=\la X,Y,R\ra$ be a relational system. The \emph{dual of $\Rbf$} is the relational system $\Rbf^\perp:=\la Y,X,R^\perp\ra$ where $uR^\perp v \Leftrightarrow \neg(v R u)$. It is clear that $\dfrak(\Rbf^\perp)=\bfrak(\Rbf)$ and $\bfrak(\Rbf^\perp)=\dfrak(\Rbf)$. Also, given a linear order $L$,
   \[\LCU_\Rbf(L)\text{\ iff }\COB_{\Rbf^\perp}(L).\]
   Hence, by Remark~\ref{rem:COB},
   \[\LCU_\Rbf(L)\text{\ iff }\Rbf^\perp\leqT L.\]
   When $L$ has no greatest element, $L^\perp$ is Tukey-equivalent to $L$, so
   \[\LCU_\Rbf(L)\text{\ iff }L\leqT \Rbf.\]
   Although $\LCU$ is a particular case of $\COB$, they are used with different roles in our applications, so it is more practical to use different notations.
\end{remark}

As a direct consequence of these remarks:

\begin{lemma}[cf.~{\cite[\S1]{GKS}}]\label{lem:COBbounds}
   Let $\Rbf$ be a relational system, $S$ a directed partial order and let $L$ be a linear order without greatest element. Then
   \begin{enumerate}[(a)]
     \item $\COB_\Rbf(S)$ implies $\cp(S)\leq\bfrak(\Rbf)$ and $\dfrak(\Rbf)\leq\cf(S)$.
     \item $\LCU_\Rbf(L)$ implies $\bfrak(\Rbf)\leq\cp(L)=\cf(L)\leq\dfrak(\Rbf)$.
   \end{enumerate}
\end{lemma}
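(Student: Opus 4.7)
The plan is to unpack both parts directly from the definitions, though both statements can also be seen as consequences of the Tukey connections recorded in Remarks~\ref{rem:COB} and~\ref{rem:LCU} together with the standard fact that $\mathbf{R}\leq_T\mathbf{R}'$ forces $\bfrak(\mathbf{R}')\leq\bfrak(\mathbf{R})$ and $\dfrak(\mathbf{R})\leq\dfrak(\mathbf{R}')$. The direct approach is clean enough that I would write it out, since no real obstacle is hidden.

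For part (a), fix a witness $\bar{y}=\{y_i:i\in S\}$ of $\COB_\Rbf(S)$. To get $\cp(S)\leq\bfrak(\Rbf)$, take any $F\subseteq X$ with $|F|<\cp(S)$: the set $\{i_x:x\in F\}\subseteq S$ has size $<\cp(S)$, so admits an upper bound $j\in S$, and then $y_j$ is a common $R$-bound of $F$ because $j\geq i_x$ gives $xRy_j$ for every $x\in F$. To get $\dfrak(\Rbf)\leq\cf(S)$, pick a cofinal $D\subseteq S$ of size $\cf(S)$; then $\{y_j:j\in D\}$ dominates $X$ in the $R$-sense, since for any $x\in X$ some $j\in D$ is above $i_x$ and hence $xRy_j$.

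For part (b), first recall the standard observation that on a linear order $L$ without greatest element one has $\cp(L)=\cf(L)$: any subset which is not cofinal is bounded above (by linearity) and hence admits a strict upper bound (using no greatest element), so completeness failures and cofinality failures coincide. Now fix a witness $\bar{x}=\{x_i:i\in L\}$ of $\LCU_\Rbf(L)$. For $\bfrak(\Rbf)\leq\cf(L)$, choose $D\subseteq L$ cofinal of minimal size; the family $\{x_i:i\in D\}$ has no $R$-bound, because for any candidate $y\in Y$ the $\LCU$ hypothesis supplies some $i\in L$ with $\neg(x_jRy)$ whenever $j\geq i$, and cofinality of $D$ yields $j\in D$ with $j\geq i$.

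For $\cf(L)\leq\dfrak(\Rbf)$, take a dominating $D'\subseteq Y$ of size $\dfrak(\Rbf)$ and for each $y\in D'$ pick $i_y\in L$ witnessing $\LCU$ for~$y$. I claim $\{i_y:y\in D'\}$ is cofinal in $L$, which immediately gives the bound. Otherwise, by linearity, this set is bounded above by some $k\in L$, i.e.\ $i_y<k$ for all $y\in D'$, and since $L$ has no greatest element we may even strengthen $k$ if needed. By density of $D'$ in $Y$ there exists $y\in D'$ with $x_kRy$, but $k\geq i_y$ yields $\neg(x_kRy)$ via $\LCU$, a contradiction. This completes both inequalities of~(b).
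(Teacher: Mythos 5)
Your proof is correct. The paper itself gives no written argument here: it states the lemma as ``a direct consequence'' of Remarks~\ref{rem:COB} and~\ref{rem:LCU}, i.e.\ it reads off the inequalities from the Tukey characterizations $\COB_\Rbf(S)\Leftrightarrow \Rbf\leqT S$ and $\LCU_\Rbf(L)\Leftrightarrow L\leqT\Rbf$ together with the standard monotonicity of $\bfrak$ and $\dfrak$ under $\leqT$. You consciously bypass that machinery and unpack the definitions directly; both routes are sound and of essentially the same length. What the Tukey route buys is reuse of the framework the section has just set up (and it makes the duality between (a) and (b) formally automatic rather than argued separately); what your direct route buys is self-containedness and the fact that it does not silently rely on the unstated Tukey-monotonicity fact. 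One small stylistic point: in (b) for $\cf(L)\leq\dfrak(\Rbf)$, the sentence ``since $L$ has no greatest element we may even strengthen $k$ if needed'' is superfluous --- once you have $i_y<k$ for all $y\in D'$, applying $\LCU$ to $k$ itself already gives the contradiction; the ``no greatest element'' hypothesis is only needed earlier to identify $\cp(L)$ with $\cf(L)$.
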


In our applications we aim to force $\COB_\Rbf(S)$ and $\LCU_\Rbf(L)$ for a given relational system of the reals $\Rbf$;  this will help us compute the value of $\bfrak(\Rbf)$ and $\dfrak(\Rbf)$ in generic extensions. For this purpose, the following variation of Definition~\ref{def:COB} is very practical.

\begin{definition}[{\cite{GKMS2}}]\label{def:COBforcing}
   Let $\Rbf=\la X,Y,R\ra$ be a relational system of the reals, $S=\la S,\leq_S\ra$ a directed partial order, $L=\la L,\leq_L\ra$ a linear order, and let $\Por$ be a forcing notion. Define the following properties.\medskip

   \noindent$\COB_\Rbf(\Por,S)$: There is a family $\dot{\bar{y}}=\{\dot{y}_i:i\in S\}$ of $\Por$-names of members of $Y^{V^\Por}$ such that, for any $\Por$-name $\dot{x}$ of a member of $X^{V^\Por}$ there is some $i\in S$ such that
   \[\Vdash_\Por \forall j\geq_S i\, (\dot{x}R\dot{y}_j).\]

   \noindent$\LCU_\Rbf(\Por,L)$: There is a family $\dot{\bar{x}}=\{\dot{x}_i:i\in L\}$ of $\Por$-names of members of $X^{V^\Por}$ such that, for any $\Por$-name $\dot{y}$ of a member of $Y^{V^\Por}$ there is some $i\in L$ such that
   \[\Vdash_\Por \forall j\geq_L i\, (\neg(\dot{x}_jR\dot{y})).\]
\end{definition}

\begin{remark}\label{rem:COB2}
   Concerning the properties $\COB_\Rbf(\Por,S)$ and $\LCU_\Rbf(\Por,L)$, the relational system $\Rbf$ (i.e., both base sets as well as the relation) are interpreted in the generic extension (this is why we required these objects to be definable), while $S$ and $L$ are taken as sets in the ground model (not interpreted).

   It is clear that $\COB_\Rbf(\Por,S)$ implies $\Vdash_\Por\COB_\Rbf(S)$. Although the converse is not true in general, it holds in the cases we are interested in, when $\Por$ is ccc and $\cp(S)$ is uncountable. More precisely, if $\cp(S)$ is uncountable and $\Por$ is $\cp(S)$-cc then $\COB_\Rbf(\Por,S)$ is equivalent to $\Vdash_\Por\COB_\Rbf(S)$. Moreover, $\Por$ forces $\cp(S)^{V^\Por}=\cp(S)^V$ and $\cf(S)^{V^\Por}=\cf(S)^V$,
   so, by Lemma~\ref{lem:COBbounds}, in the generic extension $\COB_\Rbf(S)$ implies $\cp(S)^V\leq\bfrak(\Rbf)$ and $\dfrak(\Rbf)\leq\cf(S)^V$.

   Likewise, $\LCU_\Rbf(\Por,L)$ implies $\Vdash_\Por\LCU_\Rbf(L)$, and the converse holds whenever $L$ has no greatest element, $\cf(L)$ is uncountable and $\Por$ is $\cf(L)$-cc.

   However, the restriction ``$\cp(S)$ is uncountable and $\Por$ is $\cp(S)$-cc" is not required for the following result.
\end{remark}

\begin{lemma}[{\cite[Lemma~1.3]{GKMS2}}]\label{lem:COBforbd}
   Let $\Rbf$ be a relational system of the reals, $S$ a directed partial order without greatest element, and let $\Por$ be a forcing notion. If $\mu=\cp(S)^V$ and $\lambda=\cf(S)^V$, then
   \begin{enumerate}[(a)]
     \item $\COB_\Rbf(\Por,S)$ implies $\Vdash_\Por$``$\mu\leq\bfrak(\Rbf)$ and $\dfrak(\Rbf)\leq|\lambda|$".
     \item If $L=S$ is a linear order, then $\LCU_\Rbf(\Por,L)$ implies
      \[\Vdash_\Por\text{``}\bfrak(\Rbf)\leq|\lambda|\leq\lambda\leq\dfrak(\Rbf)\text{"}.\]
   \end{enumerate}
\end{lemma}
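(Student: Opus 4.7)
The two parts of the lemma have parallel structure, so I would handle them in tandem. The common engine is: $\COB_\Rbf(\Por,S)$ (respectively $\LCU_\Rbf(\Por,L)$) assigns a ground-model index in $S$ (resp.\ $L$) to every $\Por$-name of a real in $X$ (resp.\ $Y$), and the completeness/cofinality of $S$ in $V$ lets us combine or extract such indices. The arguments do not use any chain condition on $\Por$.

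For the upper-bound direction in each part, I would fix once and for all, in $V$, a cofinal subset $D_0\subseteq S$ (resp.\ $D_0\subseteq L$) of size $\lambda$. Then $\{\dot y_i[G]:i\in D_0\}$ in (a) (resp.\ $\{\dot x_i[G]:i\in D_0\}$ in (b)) is a family of size at most $|\lambda|$ in $V^\Por$. Given any $\Por$-name $\dot x\in X$ in (a), $\COB$ supplies $i^*\in S$ with $\Vdash\forall j\ge i^*\,(\dot xR\dot y_j)$; cofinality of $D_0$ produces some $j\in D_0$ above $i^*$, giving an $R$-dominator in our family. The analogous argument with $\LCU$ shows that in (b) the family is forced to be $R$-unbounded. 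This yields $\dfrak(\Rbf)\le|\lambda|$ and $\bfrak(\Rbf)\le|\lambda|$ in the two parts.

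For the lower-bound direction, I would argue by contradiction. Suppose in (a) that $p\Vdash\dot F\subseteq X$ is $R$-unbounded with $|\dot F|<\check\mu$ (in (b), $\dot D\subseteq Y$ is $R$-dominating with $|\dot D|<\check\lambda$). By the maximum principle, fix a $\Por$-name $\dot\theta$ with $p\Vdash\dot\theta<\check\mu$ and names $\dot x_\alpha$ for all ordinals $\alpha$, padded by a default element $x_0\in X$ when $\alpha\ge\dot\theta$, so that $p\Vdash\dot F=\{\dot x_\alpha:\alpha<\dot\theta\}$. Apply $\COB$ to each $\dot x_\alpha$ to obtain $i_\alpha\in S$. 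Strengthen $p$ to $r$ deciding $\dot\theta=\check\theta^*$ for some ordinal $\theta^*<\mu$. Because $\mu=\cp(S)^V$ is a cardinal in $V$, we have $|\theta^*|^V<\mu=\cp(S)^V$, so the set $\{i_\alpha:\alpha<\theta^*\}$ is bounded in $S$ by some $i^*\in V$. Then $r\Vdash\dot y_{i^*}$ $R$-bounds every $\dot x_\alpha$ with $\alpha<\check\theta^*$, hence bounds $\dot F$, contradicting the choice of $p$. Part (b) runs identically with $\dot y_\alpha$ in place of $\dot x_\alpha$ and $\LCU$ in place of $\COB$, using now the linearity of $L$ to get $\cp(L)=\cf(L)=\lambda$, so that $\{i_\alpha:\alpha<\theta^*\}$ is again bounded by some $i^*\in L$; then no $\dot y_\alpha$ $R$-dominates $\dot x_{i^*}$, contradicting that $\dot D$ was forced dominating. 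The trivial $|\lambda|\le\lambda$ then closes the chain in (b).

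There is no real obstacle, but the point that requires attention is the bookkeeping between $V$- and $V^\Por$-cardinalities. Since no chain condition is assumed, $\mu$ and $\lambda$ need not remain cardinals in $V^\Por$; however, the argument only invokes completeness in $V$, applied to the ground-model ordinal $\theta^*$ chosen by $r$, whose $V$-cardinality is strictly less than $\cp(S)^V$ (resp.\ $\cp(L)^V$) because $\mu$ and $\lambda$ are cardinals in $V$ by definition. This is exactly why the statement writes $\mu\le\bfrak(\Rbf)$ and $\lambda\le\dfrak(\Rbf)$ (comparisons of ordinals) on one side and $|\lambda|$ on the other: the dominating family produced in the extension has at most $|\lambda|$ many distinct values in $V^\Por$.
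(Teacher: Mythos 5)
Your proof is correct, and since the paper cites this as {\cite[Lemma~1.3]{GKMS2}} without reproving it, there is no in-text proof to compare against; your argument is the standard one for this kind of statement. Both directions are handled cleanly: the upper bounds come from a ground-model cofinal set of size $\lambda$, and the lower bounds from applying $\COB$ (resp.\ $\LCU$) to a family of names indexed by a ground-model ordinal $\theta^*<\mu$ (resp.\ $<\lambda$), then using directedness (resp.\ linearity) to bound $\{i_\alpha:\alpha<\theta^*\}$ in $S$; your closing remark correctly locates the only delicate point, namely that completeness of $S$ is invoked only in $V$ where $\mu$ and $\lambda$ are honest cardinals, which is exactly what licenses the ordinal comparison $\lambda\le\dfrak(\Rbf)$ on one side and the cardinal $|\lambda|$ on the other.
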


\section{Preserving splitting families with symmetric iterations}\label{sec:splpres}

\subsection{The single forcing \texorpdfstring{$\Gor_\Bbf$}{GB}}

Using suitable $2$-graphs, we define
a poset which will be used as factor
for the
forcing adding
the splitting families we aim to preserve.

\begin{definition}\label{DefGB}
   Let $\Bbf=\la B,R_0,R_1\ra$ be a suitable $2$-graph. Define the forcing $\Gor_\Bbf$ whose conditions are functions $p:F_p\times n_p\to\{0,1\}$ where $F_p\in[B]^{<\aleph_0}$ and $n_p<\omega$ (also demand $F_p=\emptyset$ iff $n_p=\emptyset$). The order is defined by $q\leq p$ iff
   \begin{enumerate}[(i)]
       \item $p\subseteq q$,
       \item for each $k\in[n_p,n_q)$, the map $F_p\to 2$, $a\mapsto q(a,k)$ respects $\Bbf$, that is,
       if $e\in\{0,1\}$, $a,b\in F_p$, and $a R_e b$, then $\{q(a,k),q(b,k)\}\neq\{e\}$.
   \end{enumerate}
   For $a\in B$ denote by $\dot{\eta}_a$ the name of the generic real added at $a$, that is, $\Gor_\Bbf$ forces that, for any $k<\omega$, $\dot{\eta}_a(k)=e$ iff $p(a,k)=e$ for some $p$ in the generic set.

   For $p\in\Gor_\Bbf$ denote $\supp p:=F_p$.
\end{definition}


\begin{lemma}\label{GBprop1}
   Let $\Bbf=\la B,R_0,R_1\ra$ be a suitable $2$-graph. Then:
   \begin{enumerate}[(a)]
       \item $\Gor_\Bbf$ is $\sigma$-centered.
       \item For any $a\in B$, $\Gor_\Bbf$ forces that $\dot{\eta}_a$ is Cohen over $V$.
       \item Any $p\in\Gor_\Bbf$ forces that, for any $k\geq n_p$, the map $F_p\to 2$, $a\mapsto \dot{\eta}_a(k)$ respects $\Bbf$, that is, if $e\in\{0,1\}$, $a,b\in F_p$ and $a R_e b$, then  $\dot{\eta}_a(k)$ and $\dot{\eta}_b(k)$ cannot both be $e$ at the same time.
       \item\label{item:dafterall}
         Assume for $i\in\{1,2\}$:
         \begin{itemize}
             \item $e\in\{0,1\}$, $p_i\in\Gor_\Bbf$, $c_i\in F_{p_i}$, $c_1 R_e c_2$,
             \item $\Qor$ is a poset, $\Gor_\Bbf\lessdot\Qor$,
             \item $\dot b$ is a $\Qor$-name of an infinite subset of $\omega$,
             \item $q_i\leq p_i$ in $\Qor$ and $q_i\Vdash_\Qor\dot\eta_{c_i}{\restriction}\dot b\equiv e$,
         \end{itemize}
         Then $q_1$ and $q_2$ are incompatible.


       \item\label{item:notd} If $f:B\to B$ is a $\Bbf$-automorphism,
       then $\hat f:\Gor_\Bbf\to \Gor_\Bbf$
       defined by $ \hat f(p)(\alpha,n) = p(f^{-1}(\alpha), n)$ (where $F_{\hat{f}(p)}:=f[F_p]$),
       is a p.o.-automorphism.
   \end{enumerate}
\end{lemma}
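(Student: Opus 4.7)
My plan is to prove the five parts separately, since they address quite different properties of $\Gor_\Bbf$.

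For part (a), I would exhibit a countable decomposition into centered classes via a standard type argument: classify each $p$ by the finite table of values indexed by the natural $\omega_1$-order on $F_p$, and verify that a common extension of finitely many conditions in one class exists by taking a union (on agreeing overlaps) and then filling in the remaining cells column by column, using (the weak form of) property (v) of Definition~\ref{DefSbG} to produce a $\Bbf|_{F_p}$-respecting coloring for each added column with prescribed values on the already-determined vertices. Part (b) is then the usual density argument: given $p \in \Gor_\Bbf$, a dense $D \subseteq 2^{<\omega}$ in $V$, and $a \in B$, first adjoin $a$ to $F_p$ (extending with no new columns), let $t = p(a, \cdot)$, pick $s \in D$ extending $t$, and extend $p$ to $q$ with $n_q = |s|$ and $q(a, k) = s(k)$; for each new column $k \in [n_p, n_q)$, use (v) to choose $q(\cdot, k) : F_q \to 2$ respecting $\Bbf|_{F_q}$ with the prescribed value at $a$. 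Part (c) is immediate from order condition (ii): for any $k \geq n_p$, the conditions $q \leq p$ with $n_q > k$ are dense below $p$, and any such $q$ has its column $k$ on $F_p$ respecting $\Bbf|_{F_p}$ by definition of the order.

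Part (d) is the main content. Assuming toward contradiction $r \leq q_1, q_2$ in $\Qor$, I would first exploit $\Gor_\Bbf \lessdot \Qor$: since $r \Vdash_\Qor p_i \in \dot G_\Qor$, we have $r \Vdash p_i \in \dot G_{\Gor_\Bbf}$ for $i = 1, 2$, so $p_1, p_2$ must be compatible in $\Gor_\Bbf$. By genericity I can strengthen $r$ to $r'$ deciding some common extension $p^* \leq_{\Gor_\Bbf} p_1, p_2$ with $r' \Vdash p^* \in \dot G_{\Gor_\Bbf}$; in particular $c_1, c_2 \in F_{p^*}$. Since $\dot b$ is forced infinite, I strengthen again to $r''$ deciding some $k \in \dot b$ with $k \geq n_{p^*}$. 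Then $r''$ forces $\dot\eta_{c_1}(k) = \dot\eta_{c_2}(k) = e$, but part (c) applied to $p^*$ forces $\{\dot\eta_{c_1}(k), \dot\eta_{c_2}(k)\} \neq \{e\}$ because $c_1 R_e c_2$ and $k \geq n_{p^*}$, a contradiction. I expect the main obstacle to be precisely this passage through $p^*$: taking $k \geq \max(n_{p_1}, n_{p_2})$ alone would not suffice, because part (c) only delivers a $\Bbf$-respecting constraint jointly on $c_1$ and $c_2$ when both lie in the support $F_{p^*}$ of a single condition below which $k$ is a new column.

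For part (e), a direct verification. The map $\hat f$ is a bijection with inverse $\widehat{f^{-1}}$. For order preservation, suppose $q \leq p$; then $\hat f(p) \subseteq \hat f(q)$ as functions since $p \subseteq q$ and $\hat f$ is defined pointwise through $f^{-1}$. For the column condition at $k \in [n_p, n_q) = [n_{\hat f(p)}, n_{\hat f(q)})$, any $\alpha, \beta \in f[F_p]$ with $\alpha R_{e'} \beta$ satisfy $f^{-1}(\alpha) R_{e'} f^{-1}(\beta)$ because $f$ is a $\Bbf$-automorphism, so the condition that $q$'s column $k$ restricted to $F_p$ respects $\Bbf$ translates directly, under the relabeling by $f$, into the condition that $\hat f(q)$'s column $k$ restricted to $f[F_p]$ respects $\Bbf$.
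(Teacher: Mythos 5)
Parts (b)--(e) of your proposal are sound and track the paper's arguments closely; in fact your write-up of (d) is slightly more explicit than the paper's, correctly isolating the need to pass through a single common extension $p^*$ whose support contains both $c_1$ and $c_2$, and your direct density argument for (b) is a perfectly good substitute for the paper's projection formulation.

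Part (a), however, has a genuine gap. You propose to classify conditions $p$ ``by the finite table of values indexed by the natural $\omega_1$-order on $F_p$'' and to amalgamate finitely many same-class conditions by ``taking a union (on agreeing overlaps).'' But two conditions with the same abstract table (same $n_p$, same $|F_p|$, same pattern with respect to the increasing enumeration of $F_p$) need not agree on overlapping support. For instance, with $n_p = 1$, take $F_p = \{0,1\}$ with $p(0,0)=0$, $p(1,0)=1$, and $F_q = \{1,2\}$ with $q(1,0)=0$, $q(2,0)=1$: these share the same pattern and hence the same class, yet $p(1,0)\ne q(1,0)$, so $p \cup q$ is not even a function. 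The class is therefore not centered, and the countable decomposition you describe does not witness $\sigma$-centeredness. The paper avoids this by invoking the Engelking--Kar{\l}owicz theorem: since $|B\times\omega| = \aleph_1 \le 2^{\aleph_0}$, there is a \emph{countable} family $H \subseteq 2^{B\times\omega}$ such that every finite partial function from $B\times\omega$ to $2$ extends to some $h\in H$. Putting $C_{h,n} := \{p \in \Gor_\Bbf : p \subseteq h,\ n_p = n\}$ gives countably many pieces covering $\Gor_\Bbf$, and any finitely many conditions in one $C_{h,n}$ literally agree on overlaps (being restrictions of the same $h$), so their union is a common lower bound (the order conditions for new columns are vacuous since all heights equal $n$). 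Without something like Engelking--Kar{\l}owicz, a purely ``local-pattern'' classification cannot enforce agreement at shared vertices.
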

\begin{proof}
   To see (a), first note that since $|B\times\omega|=\aleph_1$, by Engelking--Kar{\l}owicz~\cite{EngKarl} there is a countable set $H\subseteq 2^{B\times\omega}$ such that any finite partial function from $B\times\omega$ into $2$ can be extended by some member of $H$. For $h\in H$ and $n<\omega$, let $C_{h,n}:=\{p\in\Gor_\Bbf:p\subseteq h\text{\ and }n_p=n\}$. It is clear that $C_{h,n}$ is centered and $\Gor_\Bbf=\bigcup_{h\in H}\bigcup_{n<\omega}C_{h,n}$, so $\Gor_\Bbf$ is $\sigma$-centered.\smallskip

  \noindent (b): Consider Cohen forcing $\Cor:=2^{<\omega}$ ordered by end-extension. For $a\in B$ define $\proj_a:\Gor_\Bbf\to\Cor$ such that, for any $p\in\Gor_\Bbf$, $\proj_a(p):=\la p(a,k) : k<n_p\ra$ if $a\in\supp p$, or $\proj_a(p)$ is the empty sequence otherwise. It is enough to show that $\proj_a$ is a forcing projection, that is,
   \begin{enumerate}[(i)]
      \item for any $p,q\in\Gor_\Bbf$ if $q\leq p$ then $\proj_a(q)\supseteq\proj_a(p)$,
      \item for any $p\in\Gor_\Bbf$ and $s\in\Cor$, if $s\supseteq\proj_a(p)$ then there is some $q\leq p$ in $\Gor_B$ such that $\proj_a(q)\supseteq s$ (even $\proj_a(q)=s$),
      \item $\proj_a[\Cor_\Bbf]$ is dense in $\Cor$ (even $\proj_a$ is onto).
   \end{enumerate}
   Property (i) is easy, (ii) follows by Definition~\ref{DefSbG}(v), and (iii) follows by (ii) and the fact that $\proj_a(\emptyset)=\la\ \ra$.\smallskip

   \noindent (c): By the definition of the order of $\Gor_\Bbf$.\smallskip

   \noindent (\ref{item:dafterall}): Assume $q\in\Qor$ is stronger than $q_1$ and $q_2$, so $q\Vdash$``$\{k<\omega:\, \dot\eta_{c_1}(k)=\dot\eta_{c_2}(k)=e\}$ is infinite". Hence, there is some $p\in\Gor_\Bbf$ stronger than $p_1$ and $p_2$ forcing the same, but this contradicts (c) because $c_1,c_2\in F_p$ and $c_1 R_e c_2$.\smallskip

   \noindent (\ref{item:notd}) is straightforward.
\end{proof}

\begin{remark}\label{rem:blalb}
  The obvious restriction
  of $\Gor_\Bbf$ to, say, the first two coordinates, is not a projection, and $\Gor_\Bbf$ is not a FS iteration
  of length $\omega_1$ in any natural way.
  Assume, e.g.,
  we restrict to
  $\{0,1\}\subseteq B=\omega_1$, and  $\Bbf$
  contains an $e$-colored edge from node $e$ to node $2$ for $e\in\{0,1\}$. Start with a condition $p: \{0,1,2\}\times n\to 2$ (for e.g.\ $n=1$),
  restrict it to $p^-=p{\restriction} \{0,1\}$ and extend it to $p'\in \Gor_{\Bbf{\restriction}\{0,1\}}$ by
  setting $p'(e,n)=e$
  for $e\in\{0,1\}$. Then there is no
  $q\in \Gor_\Bbf$, $q\le  p$,  compatible with $p'$.
\end{remark}

We will use FS iterations where the first step is given by a FS product of posets of the form $\Gor_\Bbf$ as above.
It is clear that, if $\Bbf$ is a S2G in the ground model, then it is still a S2G in any
extension preserving $\omega_1$. On the other hand, constructing $\Gor_\Bbf$ from $\Bbf$ is absolute for transitive models of ZFC, so any finite support product of posets of the form $\Gor_\Bbf$ is forcing equivalent to their finite support iteration (as long as the sequence of $2$-graphs lives in the ground model).

\subsection{Suitable iterations, nice names and automorphisms}

We now introduce some notions associated with these iterations, relevant for the preservation of splitting families.

 From this point on, products of  ordinals (such as $\omega_1\pi$)  should be interpreted as ordinal products.

\begin{definition}\label{DefSI}
    A \emph{suitable iteration} is defined by the
    following objects:
    \begin{enumerate}[(I)]
        \item A cardinal $\pi^\tbf_0>0$.
        \item For each $\delta<\pi^\tbf_0$, a S2G $\Bbf^\tbf_\delta=\la B^\tbf_{\delta},R^\tbf_{\delta,0},R^\tbf_{\delta,1}\ra$ with  $B^\tbf_\delta:=[\omega_1\delta,\omega_1(\delta+1))$,
        \item an ordinal $\pi^\tbf\geq\pi^\tbf_1:=\omega_1\pi^\tbf_0$,
        \item a FS ccc iteration $\Por^\tbf$ of length $1+(\pi^\tbf- \pi_1^\tbf)$
        where the first iterand is
        the FS product of the $\Gor_{\Bbf_\delta}$ for $\delta<\pi_0^\tbf$, called
        $\Por^\tbf_{\pi_1^\tbf}$,
        and the following iterands are indexed
        by $\xi\in \pi^\tbf\smallsetminus \pi_1^\tbf$ and are
        ccc posets called $\Qnm^\tbf_\xi$.
\end{enumerate}
         As usual, we denote with $\Por^\tbf_\xi$ the result of the iteration up to $\xi$ (for $\pi^\tbf_1\le \xi\le \pi^\tbf$),
         and use $\Por^\tbf$ to denote either
         $\Por^\tbf_{\pi^\tbf}$ or the whole
         iteration (or its definition). See Figure~\ref{fig:iteration} for an illustration.
\end{definition}


    \begin{remark}
    Note that we could also view $\Por_{\pi_1}$ as (the result of) a FS-iteration of length $\pi_0$ (instead of length $1$, as we do in the definition).
    Then we would get an iteration $\Por$ of
    $\pi_0+(\pi-\pi_1)$.
    However, $\Por_{\pi_1}$
     is not a FS iteration of length $\pi_1$, at least not with natural iterands, see Remark~\ref{rem:blalb}.
    \end{remark}

Let us mention some notation:
\begin{notation}
\begin{enumerate}[(1)]
    \item A real-number-poset is a poset whose universe is a subset of the set of real numbers. For simplicity, we identify  the ``set of real numbers'' with the power set of $\omega$.
    \item For notational simplicity we will often identify
        $\Por_{\zeta+1}$
    (a set of partial functions)
    with
        $\Por_\zeta*\Qnm_\zeta$
    (a set of pairs $(p,q)$ with $p\in \Por_\zeta$ and $p\Vdash q\in \Qnm_\zeta$).
    \item Similarly, we will not distinguish between sequence of names and names of sequences.
\end{enumerate}

\end{notation}

    We now define the ``support'' $\supp(p)\subseteq \pi$
    of a condition $p$ (as opposed to the domain $\dom(p)$,
    which is, as we are dealing with a FS iteration,
    a finite subset of the index set
    $\{0\}\cup (\pi\smallsetminus \pi_1)$). We will
    also define the ``history'' $H$
    of a name and of a condition:
    \begin{definition}\label{def:suppH}
    Let $\Por$ be a suitable iteration.

    \begin{enumerate}[(1)]
        \item\label{FSPsupp}
        For $p\in\Por_{\pi_1}$ set
        $\supp( p):=\bigcup_{\delta\in\dom p}\supp( p(\delta))\subseteq\pi_1$.
        For $p\in\Por$, set
        $\supp p:=\supp(p(0))\cup(\dom(p)\smallsetminus \{0\})$ (or just $\dom(p)$, if $0\notin\dom(p)$).%
        \footnote{Recall that according to our indexing,
        $\dom(p)$ is a finite subset
        of $\{0\}\cup (\pi\smallsetminus \pi_1)$
        (where we interpret a FS condition $p$ as a partial function from the index $\pi_0$ with finite domain $\dom(p)$).
        Recall that $q:=p(0)\in
        \Por_{\pi_1}$, which is the FS product of $\Gor_{\Bbf_\delta}$ for $\delta<\pi_0$.
        So $q$ has a finite domain $\dom(q)\subseteq \pi_0$, and if $\delta\in\dom(q)$,
        then $q(\delta)\in\Gor_{\Bbf_\delta}$, so $X_\delta=\supp (q(\delta))$ (in the sense of the forcing
        $\Gor_{\Bbf_\delta}$) is a finite subset of $[\omega_1\delta,\omega_1(\delta+1))$.
        According to our definition, $\supp(q)=\bigcup_{\delta\in \dom(q)} X_\delta$.}


        \item\label{hist} For  $p\in\Por$ and a $\Por$-name $\tau$, we define $H(p)\subseteq\pi$
        and $H(\tau)\subseteq\pi$ as follows:
    \begin{enumerate}[(i)]
        \item For $p\in\Por_{\pi_1}$, $H(p):=\supp p$.
    \end{enumerate}
        For
        $\xi\ge \pi_1$ we
        define $H$
        by recursion on $\xi$
        for $p\in \Por_\xi$ and  for a $\Por_\xi$-name
        $\tau$.
        (We assume that
        $H(r)$ has been defined for all $r\in\Por_\zeta$
        for $\pi_1\leq \zeta<\xi$
        and $H(\sigma)$
        for all $\Por_\zeta$-names
        for $\pi_1\le \zeta<\xi$):
    \begin{enumerate}[(i)]
        \setcounter{enumii}{1}

        \item For $\xi=\zeta+1$ and $p\in\Por_{\zeta+1}$,
           \[H(p):=\left\{\begin{array}{ll}
                H(p{\upharpoonright}\zeta) & \text{if $\zeta\notin\supp p$,}  \\
                H(p{\upharpoonright}\zeta)\cup\{\zeta\}\cup H(p(\zeta)) & \text{if $\zeta\in\supp p$.}
           \end{array}\right.\]
           (Here, $H(p(\zeta))$ is defined because $p(\zeta)$ is a $\Por_\zeta$-name.)
        \item When $\xi>\pi_1$ is limit and $p\in\Por_\xi$, then $H(p)$ has already been defined (because $p\in\Por_\zeta$ for some $\zeta<\xi$).
        \item
        For any $\Por_\xi$-name $\tau$ define (by $\in$-recursion on $\tau$) \[H(\tau):=\bigcup\{H(\sigma)\cup H(p):(\sigma,p)\in\tau\}.\]

    \end{enumerate}

    \end{enumerate}
\end{definition}
Note that $H(\check x)=\emptyset$
for any standard name $\check x$.\footnote{A
standard name $\check x = \{(\check y, \mathbbm 1): y\in x\}$ (for $x\in V$)
hereditarily only uses the weakest condition $\mathbbm 1$, which in our case (an iteration) is the empty partial function;
accordingly $H(\check x)=\emptyset$.
If the reader
prefers a different formal definition
of FS iteration, then they should modify
the definition of $H$ to make sure that
$H(\check x)=\emptyset$.}

\begin{figure}
    \includegraphics[width=\textwidth]{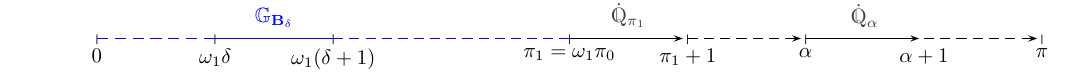}
    \caption{A suitable iteration. 
    $\pi_1=\omega_1\pi_0$ 
    is partitioned into $\pi_0$-many intervals of length $\omega_1$, and $B_\delta:=[\omega_1\delta,\omega(\delta+1))$, the set of vertices of the graph $\Bbf_\delta$, is the $\delta$-th interval of this partition.
    A suitable iteration is a FS product of the $\Gor_{\Bbf_\delta}$ for $\delta<\pi_0$, followed by a FS iteration of ccc posets. 
    The iterands of the FS iteration that follow are indexed by $\alpha\in[\pi_1,\pi)$.}
    \label{fig:iteration}
\end{figure}

\begin{remark*}
$H$ is not a ``robust'' notion:
$\Vdash\tau=\tau'$ does not imply $H(\tau)=H(\tau')$.
Still, it is a very natural and useful notion, which has appeared (in slightly different contexts) many times in forcing theory:
If $\tau$ is a $\Por_\pi$-name, then $H(\tau)\subseteq\pi$ is the set of coordinates the name $\tau$ ``depends on'', more concretely, $\tau$ can be calculated (by a function defined in $V$)
from the sequence of generic objects at the indices in $H(\tau)$.
\\
In the case of FS iterations where all iterands are real-number-posets (as in~\cite{ShCov,GKS}), $H(p)$ is countable for $p$ in a dense set;
and ``hereditarily nice names'' for reals will also have countable history.
In this paper we have to use hereditarily ${<}\lambda$-names (even for nice names of reals), the reason is indicated in Remark~\ref{rem:lambdarequired}.
\end{remark*}

Let us fix some notation regarding the
well-known ``nice names'':
\begin{definition}\label{Defnicenm}
Let $A$ and $B$ be subsets of $\Por$.
\begin{enumerate}[(1)]
    \item
   A $\Por$-name
   $\dot r$ is a \emph{nice name for a subset of $\omega$, determined by $A$}, if
   $\dot r$ has the form $\bigcup_{n\in \omega} \{(\check n,q): q \in A_n\}$, where each $A_n$ is a (possibly empty) antichain in $\Por$, and
   $A=\bigcup_{n\in\omega} A_n$.

    \item
    Analogously, $\dot Q$ is a \emph{nice name for a real-number-poset
    of size ${<}\lambda$ ,
    determined by $B$},
    if there is a $\mu<\lambda$
    such that $\dot Q$ is a sequence
    $\la \dot{r}_i\ra_{i\in\mu}$ of nice names for subsets of $\omega$ determined by $A_i$,
    together with a sequence $\la \dot{x}_{i,j}\ra_{i,j\in \mu}$ of nice names for elements in $\{0,1\}$ depending on an antichain $A'_{i,j}$ (where $\dot{x}_{i,j}=1$ codes
    $r_i\le_Q r_j$),\footnote{A nice name $\dot{x}$ of a member of $\{0,1\}$ depending on an antichain $C\subseteq\Por$ (allowed to be empty) has the form  $\dot{x}=\{(\check{0},p):p\in C\}$. Note that $p\Vdash\dot{x}=1$ for all $p\in C$, and $q\Vdash\dot{x}=0$ for any $q\in\Por$ incompatible with all the members of $C$. Moreover, $H(\dot{x})=\bigcup_{p\in C}H(p)$.}
    and $B=\bigcup_{i\in\mu} A_i \cup \bigcup_{i,j\in\mu} A'_{i,j}$.
\end{enumerate}
\end{definition}
So in this case
\begin{equation}\label{eq:Hotherspecial}
  H(\dot r)=\bigcup_{p\in A}H(p),\text{ and }
  H(\dot Q)=\bigcup_{p\in B}H(p).
\end{equation}

It is well known that every name of a subset of $\omega$ has
an equivalent nice name. Moreover, as we
can choose the conditions of the antichains in any given dense set, we get the following:
\begin{fact}\label{fact:basicnice}
   (As $\Por$ is ccc)
   Let $D\subseteq \Por$ be dense and let $\lambda$ be a cardinal with uncountable cofinality.
   \begin{enumerate}[(a)]
       \item\label{item:nicereal}
   For any $\Por$-name of a real
   there is an equivalent nice name
   determined by $A\subseteq D$
   with $|A|\le\aleph_0$.
   \item\label{item:niceQ} For any name of a poset of size ${<}\lambda$ consisting of reals,
   there
   is an equivalent nice name
   determined by a set $B\subseteq D$
   with $|B|<\lambda$.
   \end{enumerate}
\end{fact}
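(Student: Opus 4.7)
The plan is to prove both parts by the standard ``force-to-decide'' construction of nice names, with the twist that we constrain the constituent antichains to lie inside the prescribed dense set $D$.

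For (a), given a $\Por$-name $\dot r$ for a subset of $\omega$, I would set, for each $n \in \omega$,
$B_n := \{q \in D : q \Vdash \check n \in \dot r\}$,
and choose $A_n$ to be a maximal antichain inside $B_n$ (by Zorn). Since $\Por$ is ccc, each $A_n$ is countable, hence $A := \bigcup_n A_n \subseteq D$ has cardinality $\le \aleph_0$. Define the nice name $\dot r' := \bigcup_n \{(\check n, q) : q \in A_n\}$. The verification $\Vdash \dot r = \dot r'$ has a trivial direction ($q \in A_n \subseteq B_n$ forces $\check n \in \dot r$), and one crucial direction whose proof is the following predensity lemma: $A_n$ is predense below every $p$ that forces $\check n \in \dot r$. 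Indeed, given $p' \le p$, density of $D$ yields some $p'' \le p'$ in $D$; since $p'' \le p$, it forces $\check n \in \dot r$, so $p'' \in B_n$, and maximality of $A_n$ inside $B_n$ produces $q \in A_n$ compatible with $p''$, hence with $p'$. Genericity then forces $G \cap A_n \neq \emptyset$ whenever $n \in \dot r[G]$.

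For (b) I would reduce to (a). First, using ccc and uncountable cofinality of $\lambda$, fix a cardinal $\mu < \lambda$ in the ground model with $\Vdash |\dot Q| \le \check\mu$: a nice-name analysis of the name for $|\dot Q|$ yields a countable antichain $\{q_k : k \in \omega\}$ with $q_k \Vdash |\dot Q| = \check\beta_k$, $\beta_k < \lambda$, and $\mu := \sup_k\beta_k + 1$ stays below $\lambda$ by $\cf(\lambda) \ge \aleph_1$. Fix a ground-model sequence $\la \dot r_i : i < \mu\ra$ of names enumerating the universe of $\dot Q$ (with possible repetitions) and apply (a) coordinatewise to replace each $\dot r_i$ by an equivalent nice name determined by some countable $A_i \subseteq D$. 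For the order, for each $(i,j) \in \mu \times \mu$ take $A'_{i,j}$ to be a maximal antichain in $\{q \in D : q \Vdash \dot r_i \le_{\dot Q} \dot r_j\}$ and let $\dot x_{i,j}$ be the nice $\{0,1\}$-valued name depending on $A'_{i,j}$; the same predensity argument as in (a) shows that this encodes the correct order. Finally, $B := \bigcup_i A_i \cup \bigcup_{i,j} A'_{i,j} \subseteq D$ has cardinality at most $\mu \cdot \aleph_0 + \mu \cdot \mu$, which is $\mu < \lambda$ when $\mu$ is infinite and is $\le \aleph_0 < \lambda$ when $\mu$ is finite (using $\cf(\lambda) \ge \aleph_1$).

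The only real substance is the predensity observation in (a), which is precisely what allows the supporting antichains to be drawn from an arbitrary dense set rather than from all of $\Por$; both the reduction from $|\dot Q| < \lambda$ to a fixed bounding cardinal $\mu < \lambda$ and the coordinatewise application of (a) to produce (b) are routine ccc bookkeeping.
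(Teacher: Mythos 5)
Your proof is correct and is exactly the argument the paper alludes to when it says ``as we can choose the conditions of the antichains in any given dense set'': the paper states this Fact without a detailed proof, and your construction (maximal antichains $A_n$ inside $B_n\subseteq D$, with the predensity observation doing the work, followed by the routine reduction to a bounding cardinal $\mu<\lambda$ for part (b)) is precisely the standard argument intended. No gap; same approach.
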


Every automorphism of $\Bbf$ induces an automorphism of $\Gor_\Bbf$,
see Lemma~\ref{GBprop1}(\ref{item:notd}).
Therefore, a
$\pi_0$-sequence $h$ of such automorphisms
induces an automorphism of the (FS) product $\Por_{\pi_1}$.
Such an automorphism can sometimes be naturally extended
to the whole iteration $\Por$ (which will allow isomorphism-of-names arguments
and subsequently show $\LCU_\textrm{sp}$).

What do we mean by ``naturally extend''?
Recall that, whenever $f:P\to P$ is an automorphism on some poset $P$, and $\tau$ is a $P$-name, $f$ sends $\tau$ to the $P$-name
\[f^*(\tau):=\{(f^*(\sigma),f(p)):(\sigma,p)\in\tau\}.\]
Also, $(f^{-1})^*(f^*(\tau))=\tau$; and $p\Vdash\varphi(\tau)$ iff $f(p)\Vdash\varphi(f^*(\tau))$
whenever $p\in P$ and $\varphi(x)$ is a formula.
If $\dot{Q}$ is a $P$-name and $P\Vdash f^*(\dot{Q})=\dot{Q}$, then we can certainly
extend $f$ to $P*\dot{Q}$.
We say that $\Por$ is $h$-symmetric, if this is the case
in all steps of the iteration:

\begin{definition}\label{DefAutom}
  Let $\Por$ be a suitable iteration.
    \begin{enumerate}
        \item\label{t2aut} A bijection $h:\pi_1\to\pi_1$ is a \emph{$2$G-automorphism} if, for each $\delta<\pi_0$, $h{\upharpoonright}B_\delta$ is an automorphism of $\Bbf_\delta$.
    \item
    Such an $h$ defines an automorphism
    $\hat{h}_{\pi_1}$ of $\Por_{\pi_1}\to\Por_{\pi_1}$,
    by $\hat{h}_{\pi_1}(p):=\la \hat{f}_\delta(p(\delta)):\delta\in\dom p\ra$
       where $f_\delta:=h{\restriction}[\omega_1\delta,\omega_1(\delta+1))$
       is the automorphism of $\Bbf_\delta$
       induced by $h$, and $\hat f_\delta$ is defined as in
       Lemma~\ref{GBprop1}(\ref{item:notd}).
    \item
    We say
    $\Por$ is $h$-\emph{symmetric} if the following
    inductive construction defines
    $\hat{h}_\xi:\Por_\xi\to\Por_\xi$
    for all $\pi_1\le \xi\le \pi$:
   \begin{enumerate}[(i)]
       \item For $\xi=\zeta+1$, we require that $\Vdash_{\Por_\zeta} {\hat{h}_\zeta}^*(\Qnm_\zeta)=\Qnm_\zeta$.
       (Otherwise the construction fails.)
       We then define $\hat{h}_{\zeta+1}:\Por_{\zeta+1}\to\Por_{\zeta+1}$ by $\hat{h}_{\zeta+1}(p{\upharpoonright}\zeta,p(\zeta))=(\hat{h}_\zeta(p{\upharpoonright}\zeta),{\hat{h}_\zeta}^*(p(\zeta)))$.
       \item For $\xi>\pi_1$ limit, set  $\hat{h}_\xi:=\bigcup_{\zeta<\xi}\hat{h}_\zeta$.
   \end{enumerate}
   In this case set $\hat{h}:=\hat{h}_\pi$, which is an automorphism of $\Por$.


   \item For any
   $\delta<\pi_0$ and any pair $(a,b)\in B_{\delta}$,
   fix a
   $2$G-automorphism
   $h^\delta_{a,b}$ such
   that
   $h^\delta_{a,b}(a)=b$
   and
   $h^\delta_{a,b}{\upharpoonright}\Bbf_\zeta$ is the identity for any $\zeta\ne \delta$.
   We can pick such $h^\delta_{a,b}$
   by Definition~\ref{DefSbG}(vi).

   \item
   Let
   $\Hgroup^*$ be the group generated
   by the
   $h^\delta_{a,b}$
   above. So
   $|\Hgroup^*|=\max\{\pi_0,\aleph_1\}$.
   Note also that for all $h\in \Hgroup^*$ and $\delta\in \pi_0$
   we have $h[B_\delta]=B_\delta$, and that $\supp(h):=\bigcup\{B_\delta:\, h{\restriction}B_\delta\ne\textrm{id}_{B_\delta},\ \delta<\pi_0\}$ has size ${\le}\aleph_1$.
%

  \item
   We say that
   $\Por$ is
   \emph{symmetric} if $\Por$ is $h$-symmetric for every $h\in \Hgroup^*$.
   \end{enumerate}
   %
\end{definition}







In isomorphism-of-names arguments it is relevant to know when a condition or a name remains unchanged after applying an automorphism $\hat h$. The following states a sufficient condition:

\begin{lemma}\label{automident}
    Assume that $\Por$ is $h$-symmetric and $\pi_1\leq\xi\leq\pi$.
\begin{enumerate}[(a)]
    \item
    If $p\in\Por_\xi$ and $h{\upharpoonright}(H(p)\cap \pi_1)$ is the identity, then $\hat{h}_\xi(p)=p$.

    \item
   If $\tau$ is a $\Por_\xi$-name and $h{\upharpoonright}(H(\tau)\cap \pi_1)$ is the identity, then 
   ${\hat{h}_\xi}^*(\tau)=\tau$.

   \item Let $g:=h^{-1}$. Then
   $\Por$ is $g$-symmetric and
   $\hat{g}_\xi=\hat{h}^{-1}_\xi$.
   \end{enumerate}
\end{lemma}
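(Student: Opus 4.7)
The plan is to prove (a) and (b) by a simultaneous induction on $\xi$, with (b) handled by an additional inner $\in$-recursion on the name $\tau$, and then to derive (c) from a parallel induction using the covariant functoriality of the name-action $\tau\mapsto \hat h_\xi^*(\tau)$. The only substantive computation happens at the base case $\xi = \pi_1$ of (a); all the remaining steps consist of threading the induction hypotheses through the recursive definitions of $H$ and $\hat h_\xi$.

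For the base case of (a), take $p\in \Por_{\pi_1}$, so $H(p)\cap\pi_1 = \supp(p) = \bigcup_{\delta\in\dom p}\dom(p(\delta))$ by Definition~\ref{def:suppH}. By Definition~\ref{DefAutom}, $\hat h_{\pi_1}(p)(\delta) = \hat f_\delta(p(\delta))$ with $\hat f_\delta(q)(\alpha,n) = q(f_\delta^{-1}(\alpha),n)$ and support $f_\delta[F_q]$, where $f_\delta = h{\upharpoonright} B_\delta$. The hypothesis that $h$ fixes $\supp(p)$ pointwise forces each $f_\delta$ to be the identity on $\dom(p(\delta))$, so $\hat f_\delta(p(\delta)) = p(\delta)$ and thus $\hat h_{\pi_1}(p) = p$.

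For the successor step $\xi = \zeta+1$ of (a), the recursive clause for $H$ in Definition~\ref{def:suppH} gives $H(p{\upharpoonright}\zeta) \subseteq H(p)$ and, when $\zeta\in\supp(p)$, also $H(p(\zeta))\subseteq H(p)$; the inductive hypothesis (a) at $\zeta$ applied to $p{\upharpoonright}\zeta$ and (b) at $\zeta$ applied to the $\Por_\zeta$-name $p(\zeta)$ then yield $\hat h_{\zeta+1}(p) = p$. The limit case is immediate from $\hat h_\xi = \bigcup_{\zeta<\xi} \hat h_\zeta$. For (b) at a fixed $\xi$, I proceed by $\in$-recursion: since $H(\tau) \supseteq H(\sigma) \cup H(p)$ for every $(\sigma,p)\in\tau$, the hypothesis propagates to $\sigma$ (by the inner recursion for (b)) and to $p$ (by (a) at $\xi$, already proved), so $\hat h_\xi^*(\tau) = \{(\sigma,p):(\sigma,p)\in\tau\} = \tau$.

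For (c), I would first establish by induction on $\xi$ that the symmetry clause $\Vdash_{\Por_\zeta} \hat g_\zeta^*(\Qnm_\zeta) = \Qnm_\zeta$ holds, so that $\hat g_\xi$ is in fact well-defined: this is obtained by applying the operator $\hat g_\zeta^*$, which preserves forced equalities since $\hat g_\zeta$ is an automorphism, to the known identity $\Vdash\hat h_\zeta^*(\Qnm_\zeta) = \Qnm_\zeta$ and invoking the inductive equation $\hat g_\zeta^* \circ \hat h_\zeta^* = \mathrm{id}$. The equality $\hat g_\xi = \hat h_\xi^{-1}$ holds at the base case because $\widehat{f^{-1}} = \hat f^{-1}$ for the building-block actions on each $\Gor_{\Bbf_\delta}$, is propagated through successors by the covariant relations $(f\circ g)^* = f^*\circ g^*$ and $(f^{-1})^* = (f^*)^{-1}$, and through limits by taking unions of the $\hat g_\zeta$. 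The main obstacle is organizational rather than conceptual: keeping the double induction (externally on $\xi$, internally on name-rank for (b)) correctly set up and verifying that each inductive appeal lands at a strictly smaller object, which hinges on the precise accounting of $H$ in Definition~\ref{def:suppH}.
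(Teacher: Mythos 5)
Your proposal is correct and follows essentially the same route as the paper's own proof: a simultaneous induction on $\xi$ for (a) and (b), with (b) handled by an inner $\in$-recursion on $\tau$ that invokes (a) at the same $\xi$, and a parallel induction for (c) in which the successor step transfers the forced identity $\Vdash_{\Por_\zeta}\hat h_\zeta^*(\Qnm_\zeta)=\Qnm_\zeta$ to $\hat g_\zeta$ and verifies $\hat g_\xi=\hat h_\xi^{-1}$ componentwise. You merely write out a few steps the paper leaves implicit, such as the explicit base-case computation of $\hat h_{\pi_1}$ via the $\hat f_\delta$ actions and the reason $\hat g_\zeta^*$ preserves the forced equality for $\Qnm_\zeta$.
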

\begin{proof}
    We show the three statements by induction on $\xi$.

    For (a), we use a case
    distinction: Assume
    $\xi=\pi_1$. If $p\in\Por_{\pi_1}$ then $H(p)=\supp p$, and whenever $h$ is the identity on $\supp p$, it is clear that $\hat{h}_{\pi_1}(p)=p$.
    The limit step is also immediate (there are no new conditions, and for names use $\in$-induction).

    For the successor step $\xi=\zeta+1$, assume $p\in\Por_{\zeta+1}$ and that $h$ is the identity on $H(p)\cap\pi_1$. If $\zeta\notin\supp p$, then we have $p\in\Por_\zeta$, so $\hat{h}_{\zeta+1}(p)=\hat{h}_\zeta(p)=p$ by the induction hypothesis. So assume $\zeta\in\supp p$. Then $H(p)=H(p{\upharpoonright}\zeta)\cup\{\zeta\}\cup H(p(\zeta))$, so by induction hypothesis $\hat{h}_\zeta(p{\upharpoonright}\zeta)=p{\upharpoonright}\zeta$ and 
    ${\hat{h}_\zeta}^*(p(\zeta))=p(\zeta)$, thus $\hat{h}_{\zeta+1}(p)=p$.

   We now show (b) by
   $\in$-induction on $\tau$. If $(\sigma,p)\in\tau$ then $H(\sigma)\cup H(p)\subseteq H(\tau)$, so by induction hypothesis and (a), 
   ${\hat{h}_\xi}^*(\sigma)=\sigma$ and $\hat{h}_\xi(p)=p$. Hence
   \[{\hat{h}_\xi}^*(\tau)=\{({\hat{h}_\xi}^*(\sigma),\hat{h}_\xi(p)):(\sigma,p)\in\tau\}
   =\{(\sigma,p):(\sigma,p)\in\tau\}=\tau\]

   For (c), the steps $\xi=\pi_1$ and $\xi>\pi_1$ limit are easy, so we deal with the successor step $\xi=\zeta+1$. So assume that $\hat{g}_\zeta$ is defined and $\hat{g}_\zeta=\hat{h}^{-1}_\zeta$. Since $\hat{h}_\xi$ is defined, $\Vdash_{\Por_\zeta}\hat{h}^*_\zeta(\Qnm_\zeta)=\Qnm_\zeta$, which implies $\Vdash_{\Por_\zeta}\hat{g}_\zeta^*(\Qnm_\zeta)=\Qnm_\zeta$, so $\hat{g}_{\zeta+1}$ is defined; and for any $p\in\Por_\xi$, $\hat{g}_\xi(\hat{h}_\xi(p))=(\hat{g}_\zeta(\hat{h}_\zeta(p{\upharpoonright\zeta})),\hat{g}_\zeta^*(\hat{h}_\zeta^*(p(\zeta))))=p$, so $\hat{g}_\xi=\hat{h}^{-1}_\xi$.
\end{proof}

\subsection{A digression: Self-indexed products}

How to construct a symmetric iteration $\Por$?
We have to make sure that
at each step $\zeta$ the iterand
$\Qnm_\zeta$ is invariant under
$\hat h$
for all $h\in \Hgroup^*$.
One case that will be useful:
$\Qnm_\zeta$ is
a (ccc) FS product such that
whenever
$\dot Q$ is one of the factors,
then $\hat h^*(\dot Q)$
is also one.

But there is a technical difficulty here:
We need $\Vdash_{\Por_\zeta} \hat h^*(\Qnm_\zeta)=\Qnm_\zeta$
(i.e., really equality, not just isomorphism;
as we want to get an actual automorphism
of $\Por_{\zeta+1}$).
This is not possible if we ``naively'' index the product with an ordinal.
For example,
assume $\dot Q_0$, $\dot Q_1$ are such that
$\Vdash_{\Por_\zeta} \hat h^*(\dot Q_i)=\dot Q_{1-i}\ne \dot Q_i$.
Then $\dot Q_0\times \dot Q_1$ (the product with index set $\{0,1\}$)
is not a valid choice for
$\Qor_\zeta$, as
$\Vdash_{\Por_\zeta} \hat h^*(\dot Q_0\times \dot Q_1)=\dot Q_1\times \dot Q_0\ne \dot Q_0\times \dot Q_1$.

So instead, we define (in the extension) the FS
product $\prod \mathcal F$
of a set $\mathcal F$ of posets as
the set of all finite partial
functions $p$ from $\mathcal F$ into $\bigcup \mathcal F$ satisfying $p(Q)\in Q$ for all $Q\in \dom(p)$.
We call this object the \emph{self-indexed product} of the
set $\mathcal F$.




In our framework, we start with
a ground model set $\Xi_\zeta$
of $\Por_\zeta$-names of posets. In the $\Por_\zeta$-extension
we let $\mathcal F$
be the set of evaluations of the names in $\Xi_\zeta$,
and let
$\Qnm_\zeta$ be the self-indexed product of $\mathcal F$.

Assume that all automorphisms from $\Hgroup^*$ can be extended up to $\zeta$.\footnote{For this part all the properties of $\Hgroup^*$ are not required; it is just enough that $h\in\Hgroup^*$ implies $h^{-1}\in\Hgroup^*$.}
We assume  that $\Xi_\zeta$ is closed under
each $h\in \Hgroup^*$, i.e.,  $\dot Q\in \Xi_\zeta$ implies
$\hat h^*_\zeta(\dot Q)\in \Xi_\zeta$.
So as $\Xi_\zeta$ is also closed under
the inverse of $h$, by Lemma~\ref{automident}(c) we even get
$\hat h^*_\zeta[\Xi_\zeta]=\Xi_\zeta$. So in particular
$\hat h^*_\zeta[\Xi_\zeta]$ and $\Xi_\zeta$ evaluate to the
same set and thus yield the same self-indexed product, i.e.,
$\Vdash \hat h^*_\zeta(\Qnm_\zeta)= \Qnm_\zeta$.

We record this fact for later reference:
\begin{fact}\label{fact:specialH1}
Assume that $\Qnm_\zeta$ is a ``self-indexed'' product of $\Xi_\zeta$,
and that
    $\dot Q\in \Xi_\zeta$ implies
    $\hat h^*_\zeta(\dot Q)\in \Xi_\zeta$
    for all $h\in \Hgroup^*$.
    Then $\Por$ forces $\hat h^*_\zeta(\Qnm_\zeta)= \Qnm_\zeta$,
    so we can extend each $h\in\Hgroup^*$ to $\Por_\zeta*\Qnm_\zeta$.
\end{fact}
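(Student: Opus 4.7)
The plan is to verify the single condition $\Vdash_{\Por_\zeta}\hat h^*_\zeta(\Qnm_\zeta)=\Qnm_\zeta$ required by Definition~\ref{DefAutom}(3)(i); once this is established, the extension of $h$ to $\Por_\zeta*\Qnm_\zeta$ is immediate from that definition, so no further work is needed.

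First I would upgrade the closure hypothesis $\hat h^*_\zeta[\Xi_\zeta]\subseteq \Xi_\zeta$ to an equality. Because $\Hgroup^*$ is by definition a group (generated by the $h^\delta_{a,b}$, see Definition~\ref{DefAutom}(5)), we also have $h^{-1}\in\Hgroup^*$, so by the hypothesis $(\widehat{h^{-1}})^*_\zeta[\Xi_\zeta]\subseteq\Xi_\zeta$. By Lemma~\ref{automident}(c), $\widehat{h^{-1}}_\zeta=\hat h^{-1}_\zeta$, hence $(\widehat{h^{-1}})^*_\zeta=(\hat h^*_\zeta)^{-1}$. Combining the two inclusions gives $\hat h^*_\zeta[\Xi_\zeta]=\Xi_\zeta$ as sets of $\Por_\zeta$-names.

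Second, I would pass to the $\Por_\zeta$-extension via a generic $G$. The self-indexed product, as defined in the digression, is a \emph{function of the set} $\mathcal F:=\{\dot Q[G]:\dot Q\in\Xi_\zeta\}$ alone (no indexing ordinal appears), so $\Qnm_\zeta[G]$ is determined by $\mathcal F$. Applying the standard identity $\hat h^*_\zeta(\tau)[\hat h_\zeta[G]]=\tau[G]$ coordinatewise, $\hat h^*_\zeta(\Qnm_\zeta)[\hat h_\zeta[G]]$ equals the self-indexed product of $\{\hat h^*_\zeta(\dot Q)[\hat h_\zeta[G]]:\dot Q\in\Xi_\zeta\}=\{\dot Q[G]:\dot Q\in\Xi_\zeta\}=\mathcal F$. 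Since $\hat h_\zeta$ is an automorphism of $\Por_\zeta$, $\hat h_\zeta[G]$ ranges over all generics as $G$ does, so this yields $\Vdash_{\Por_\zeta}\hat h^*_\zeta(\Qnm_\zeta)=\Qnm_\zeta$ as required.

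The only point requiring care is the distinction between equality of names and forced equality: the self-indexed product is specified by a canonical name built from $\Xi_\zeta$, and $\hat h^*_\zeta$ applied to this name need not be syntactically identical to $\Qnm_\zeta$. Fortunately Definition~\ref{DefAutom}(3)(i) demands only $\Vdash =$, so the step ``same set of evaluations $\Rightarrow$ same evaluation of the self-indexed product'' is exactly what is needed, and the rest is bookkeeping.
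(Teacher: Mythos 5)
Your overall strategy is the paper's: first upgrade $\hat h^*_\zeta[\Xi_\zeta]\subseteq\Xi_\zeta$ to an equality of \emph{ground-model} sets of names using $h^{-1}\in\Hgroup^*$ and Lemma~\ref{automident}(c), and then observe that a self-indexed product depends only on the underlying set. That is exactly what the paper does, and your first paragraph and closing remark show you have the right mechanism in hand.

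However, the middle computation, as written, has a gap. You compute the evaluation of $\hat h^*_\zeta(\Qnm_\zeta)$ at the \emph{transformed} generic $\hat h_\zeta[G]$, and the coordinatewise identity $\hat h^*_\zeta(\dot Q)[\hat h_\zeta[G]]=\dot Q[G]$ is nothing but the standard automorphism identity; the closure hypothesis $\hat h^*_\zeta[\Xi_\zeta]=\Xi_\zeta$ never actually enters. What you have shown is $\hat h^*_\zeta(\Qnm_\zeta)[\hat h_\zeta[G]]=\Qnm_\zeta[G]$, which is the standard identity applied directly to $\Qnm_\zeta$ and is true for \emph{any} name, not just one with symmetric factors. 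The concluding sentence ``since $\hat h_\zeta[G]$ ranges over all generics, this yields $\Vdash\hat h^*_\zeta(\Qnm_\zeta)=\Qnm_\zeta$'' is therefore a non sequitur: to pass from $\hat h^*_\zeta(\Qnm_\zeta)[\hat h_\zeta[G]]=\Qnm_\zeta[G]$ to $\hat h^*_\zeta(\Qnm_\zeta)[\hat h_\zeta[G]]=\Qnm_\zeta[\hat h_\zeta[G]]$ you would already need $\Qnm_\zeta[G]=\Qnm_\zeta[\hat h_\zeta[G]]$, i.e.\ precisely the invariance you are trying to prove, which requires the closure property you never invoked in this step.

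The fix is to evaluate at $G$ itself, not at its image: $\hat h^*_\zeta(\Qnm_\zeta)$ is the name for the self-indexed product of $\hat h^*_\zeta[\Xi_\zeta]$, so $\hat h^*_\zeta(\Qnm_\zeta)[G]$ is the self-indexed product of $\{\hat h^*_\zeta(\dot Q)[G]:\dot Q\in\Xi_\zeta\}=\{\dot R[G]:\dot R\in\hat h^*_\zeta[\Xi_\zeta]\}$, and now the set-equality $\hat h^*_\zeta[\Xi_\zeta]=\Xi_\zeta$ gives $\{\dot R[G]:\dot R\in\Xi_\zeta\}=\mathcal F$, hence $\hat h^*_\zeta(\Qnm_\zeta)[G]=\Qnm_\zeta[G]$. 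This is exactly the paper's one-line argument; the closure hypothesis is load-bearing here and there is no transformed generic anywhere in it.
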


We additionally assume that each factor is (forced to be)
a real-number-poset.
Assume that $(p,q)\in \Por_\zeta*\Qnm_\zeta$.
We can densely assume that $p$ decides
the finite domain of $q$, more specifically,
the\footnote{Or rather: \emph{a} finite set,
as different  names in $\Xi_\zeta$
might evaluate to the same object, i.e., index.}  finite set $y\subseteq \Xi_\zeta$
such that $p$ forces that $\dom(q)$
is (the set of evaluations of) $y$.
Also, for each $\dot Q\in y$,
we can assume that
$q(\dot Q)$ is a
nice name for a real, determined by some $A_{\dot Q}$.
As usual, we can use a given dense set $D
\subseteq \Por_\zeta$ instead of $\Por_\zeta$.

For later reference:
\begin{fact}\label{fact:specialH2}
Assume that $\Qnm_\zeta$ is a ``self-indexed'' product of $\Xi_\zeta$ as described above, that each
factor is forced to be a set of reals, and that
$D\subseteq \Por_\zeta$ is dense.
    If $(p,q)\in \Por_\zeta*\Qnm_\zeta$,
    then there is a $(p',q')\leq (p,q)$
    such that $p'\in D$ decides the
    (finite) $\dom(q')$, and each $q'(\dot Q)$
    is a nice name determined by some
    $A_{\dot Q}\subseteq D$. So in particular
    \begin{equation}\label{eq:specialH}
H((p',q')) = H(p')\cup \{\zeta\} \cup \bigcup_{\dot Q\in\dom(q')}\bigg(H(\dot Q)\cup \bigcup_{r\in A_{\dot Q}} H(r)\bigg).
    \end{equation}
\end{fact}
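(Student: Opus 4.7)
The plan is to refine $(p, q)$ in two stages, using density of $D$ and the ccc of $\Por_\zeta$, and then read off the history formula from Definition~\ref{def:suppH}. In the first stage, I would use that $q$ is a $\Por_\zeta$-name for a finite partial function from (the evaluation of) $\Xi_\zeta$ into the reals: one can extend $p$ repeatedly to decide first the natural number $|\dom(q)|$ and then, coordinate by coordinate, which $\dot Q\in\Xi_\zeta$ evaluates to the $k$-th element of $\dom(q)$. Since $\Por_\zeta$ is ccc and $D$ is dense, this refinement can be carried out inside $D$, producing some $p_0\in D$ with $p_0\leq p$ and a specific finite $y\subseteq\Xi_\zeta$ such that $p_0$ forces $\dom(q)$ to consist of exactly the evaluations of the names in $y$.

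In the second stage, for each $\dot Q\in y$, the name $q(\dot Q)$ is, below $p_0$, a $\Por_\zeta$-name for a real (because each factor of $\Qnm_\zeta$ is forced to be a set of reals). Fact~\ref{fact:basicnice}(\ref{item:nicereal}) applied below $p_0$ then yields an equivalent nice name $\dot r_{\dot Q}$ determined by some countable antichain $A_{\dot Q}\subseteq D$. I would then let $q'$ be the natural name for the finite partial function on $y$ sending (the evaluation of) $\dot Q$ to (the evaluation of) $\dot r_{\dot Q}$, and set $p':=p_0$; by construction $(p',q')\leq (p,q)$ has the form demanded by the fact.

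It remains to verify the history formula. Assuming $y\neq\emptyset$ (otherwise the claim is trivial and the $\{\zeta\}$ term drops out), the successor clause of Definition~\ref{def:suppH}(\ref{hist}) gives
\[H((p',q')) = H(p')\cup\{\zeta\}\cup H(q').\]
In the standard pairs-with-conditions encoding, $q'$ mentions each $\dot Q\in y$ (to specify the domain) and each $r\in A_{\dot Q}$ (to specify the values via $\dot r_{\dot Q}$), so clause (iv) of the definition of $H$ together with~\eqref{eq:Hotherspecial} unpacks to
\[H(q') = \bigcup_{\dot Q\in\dom(q')} \bigg(H(\dot Q)\cup\bigcup_{r\in A_{\dot Q}} H(r)\bigg),\]
which is exactly~\eqref{eq:specialH}. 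The one subtlety that I expect to require care is that $H$ is not invariant under forced equality of names, so this final computation depends on using the explicit encoding of $q'$ produced in the construction rather than any $\Vdash$-equivalent variant; the natural encoding above satisfies the displayed formula, which is all the application requires.
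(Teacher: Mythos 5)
Your proposal is correct and follows essentially the same approach as the paper, which presents the Fact as a summary of the discussion immediately preceding it: extend $p$ into $D$ to decide the finite set $y\subseteq\Xi_\zeta$ whose evaluations form $\dom(q)$, replace each $q(\dot Q)$ by an equivalent nice name determined by a countable $A_{\dot Q}\subseteq D$ (Fact~\ref{fact:basicnice}(\ref{item:nicereal})), and read off~\eqref{eq:specialH} from Definition~\ref{def:suppH}(\ref{hist}) and~\eqref{eq:Hotherspecial}. Your closing remark that the formula holds only for the explicitly constructed name (since $H$ is not invariant under forced equality) is exactly the caveat the paper itself flags in the remark following Definition~\ref{def:suppH}.
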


\begin{remark}\label{rem:lambdarequired}
   This is the reason hereditarily countable nice names
   are not sufficient in our setting to describe reals:
   Even the \emph{index} $\dot Q$
   in such a  product $\Qnm_\zeta$
   is too complicated.
   However, as all the self-indexed products $\Qnm_\zeta$ we use
   will have factors $\dot Q$ of size ${<}\lambda$, it turns
   out we can restrict ourselves to hereditarily ${<}\lambda$-names (this will be the dense set $\Por^*_\zeta$
   of Definition~\ref{DefSstidy}).
\end{remark}

\subsection{Symmetric small history iterations preserve splitting families}

We are finally ready to prove the central fact about preservation of splitting families.



\begin{definition}\label{Defnice}
Let $\lambda$ be an uncountable cardinal.
\begin{enumerate}[(1)]
    \item A condition
    $q\in \Por$ is \emph{$\lambda$-small}, if
    $|\{\delta<\pi_0:H(q)\cap B_\delta\neq\emptyset\}|<\lambda$.
    \item
   A suitable iteration $\ttbf$ has \emph{$\lambda$-small history} if, for any $p\in\Por$, there is a $\lambda$-small $q\le p$.
\end{enumerate}
\end{definition}

So in particular if $\Por$ has $\lambda$-small history and
$\dot x$ is a name of a subset of $\omega$,
then there is an equivalent nice name $\dot b$
which only  uses $\lambda$-small conditions; and
if $\mu\ge\lambda$ has uncountable cofinality, then
\begin{equation}\label{eq:nice}
|\{\delta<\pi_0:H(\dot b)\cap B_\delta\neq\emptyset\}|<\mu.
\end{equation}

\begin{theorem}\label{PresSplit}
    Let $\Por$ be a
    symmetric
    suitable iteration with $\lambda$-small history.
    Assume $\aleph_1\leq\lambda\leq\mu\le \pi_0$ are cardinals with $\mu$ regular.
    Then $\LCU_{\Rbsp}(\Por_\pi,\mu)$ holds, and it is witnessed by $\{\dot{\eta}_{\omega_1\delta}: \delta<\mu\}$.
\end{theorem}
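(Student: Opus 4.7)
The plan is to prove $\LCU_{\Rbsp}(\Por_\pi,\mu)$ with the stated witness by a symmetry/isomorphism-of-names argument: given any name $\dot b$ for an infinite subset of $\omega$, the $\lambda$-small history hypothesis will confine the coordinates touched by $\dot b$ to a set of size ${<}\mu$, and then for every $\delta<\mu$ above that threshold the rich automorphism group of $\Bbf_\delta$ will forbid any condition from forcing $\dot\eta_{\omega_1\delta}{\restriction}\dot b$ to be almost constant.

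Concretely, I would first fix a $\Por$-name $\dot b$ for a member of $[\omega]^{\aleph_0}$ and, using Fact~\ref{fact:basicnice}(\ref{item:nicereal}) together with $\lambda$-small history, replace it by an equivalent nice name determined by countably many $\lambda$-small conditions. By~(\ref{eq:nice}) (a countable union of sets of size ${<}\mu$ has size ${<}\mu$ because $\mu\geq\aleph_1$ is regular), $S(\dot b):=\{\delta<\pi_0:H(\dot b)\cap B_\delta\neq\emptyset\}$ satisfies $|S(\dot b)|<\mu$, so $i_0:=\sup(S(\dot b)\cap\mu)<\mu$, and for every $\delta\in[i_0,\mu)$ one has $H(\dot b)\cap B_\delta=\emptyset$. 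It therefore suffices to show, for each such $\delta$, that $\dot\eta_{\omega_1\delta}$ is forced to split $\dot b$.

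Fixing such a $\delta$, I would argue by contradiction: suppose some $p\in\Por$ and $e\in\{0,1\}$ give $p\Vdash\dot\eta_{\omega_1\delta}{\restriction}\dot b\equiv^{*} e$. By density, strengthen $p$ to decide an $N\in\omega$ with $p\Vdash(\forall n\in\dot b)(n\geq N\to\dot\eta_{\omega_1\delta}(n)=e)$ and with $\omega_1\delta\in\supp p(0)(\delta)$. Let $\dot b'$ be a nice name for $\dot b\smallsetminus N$, so that $\dot b'$ is forced infinite, $H(\dot b')\subseteq H(\dot b)$, and $p\Vdash\dot\eta_{\omega_1\delta}{\restriction}\dot b'\equiv e$. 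By~(\ref{eq:S2G}), pick an uncountable $R_{\delta,e}$-complete $W\subseteq B_\delta$ containing $\omega_1\delta$. For each $c\in W\smallsetminus\{\omega_1\delta\}$, the $2$G-automorphism $h_c:=h^\delta_{\omega_1\delta,c}$ from Definition~\ref{DefAutom}(4) lies in $\Hgroup^*$ and is the identity outside $B_\delta$; by symmetry of $\Por$ it lifts to an automorphism $\hat h_c$ of $\Por$. Since $H(\dot b')\cap B_\delta=\emptyset$, Lemma~\ref{automident}(b) gives $\hat h_c^{*}(\dot b')=\dot b'$, and a direct computation on $\Gor_{\Bbf_\delta}$-names gives $\hat h_c^{*}(\dot\eta_{\omega_1\delta})=\dot\eta_c$. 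Applying $\hat h_c$ to the forcing assertion yields
\[\hat h_c(p)\Vdash\dot\eta_c{\restriction}\dot b'\equiv e,\]
and $c\in\supp(\hat h_c(p)(0)(\delta))$ because $h_c(\omega_1\delta)=c$.

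Finally, for any two distinct $c,c'\in W\smallsetminus\{\omega_1\delta\}$, the $R_{\delta,e}$-completeness of $W$ gives $c\,R_{\delta,e}\,c'$, so Lemma~\ref{GBprop1}(\ref{item:dafterall}) (applied with $\Qor:=\Por$, using $\Gor_{\Bbf_\delta}\lessdot\Por_{\pi_1}\lessdot\Por$, and with the $\dot b$ there replaced by $\dot b'$) declares $\hat h_c(p)$ and $\hat h_{c'}(p)$ incompatible in $\Por$. Thus $\{\hat h_c(p):c\in W\smallsetminus\{\omega_1\delta\}\}$ is an uncountable antichain in $\Por$, contradicting ccc. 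The main technical subtlety is the name-level equality $\hat h_c^{*}(\dot b')=\dot b'$, delivered by the history bookkeeping of Section~\ref{sec:splpres} together with Lemma~\ref{automident}(b); secondary bookkeeping is needed to verify that the trimming $\dot b\mapsto\dot b'$ does not enlarge $H$ and that all the strengthenings of $p$ can be performed simultaneously while still lying in the relevant dense sets.
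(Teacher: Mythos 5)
Your proof is correct and follows essentially the same route as the paper's: the $\lambda$-small history isolates the coordinates touched by the nice name $\dot b$, and the $2$G-automorphisms $h^\delta_{\omega_1\delta,c}$ of a single untouched $\Bbf_\delta$, which fix $\dot b$ by Lemma~\ref{automident} and move $\dot\eta_{\omega_1\delta}$ to $\dot\eta_c$, convert a single failing condition into an uncountable antichain via Lemma~\ref{GBprop1}(\ref{item:dafterall}), contradicting ccc. The only organizational difference is that the paper starts from a single $\lambda$-small condition $p$ forcing a $\mu$-sized failure set, picks one $\delta_0\in F$ with $B_{\delta_0}$ disjoint from both $H(p)$ and $H(\dot b)$, and moves the $p_{\delta_0}$'s, whereas you argue index-by-index above the threshold $i_0$ and so only need $B_\delta\cap H(\dot b)=\emptyset$ (not also $B_\delta\cap H(p)=\emptyset$) -- a slight simplification, but the underlying mechanism is identical.
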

\begin{proof}
   Towards a contradiction, assume that there are $p\in\Por$ and a $\Por$-name $\dot{b}$ of an infinite subset of $\omega$ such that \[p\Vdash|\{\delta<\mu:\dot{\eta}_{\omega_1\delta}{\upharpoonright}\dot{b}\text{\ is eventually constant}\}|=\mu.\]
   Find $F\in[\mu]^\mu$, $n_0<\omega$ and $e\in\{0,1\}$ such that, for any $\delta\in F$, there is some $p_\delta\leq p$ in $\Por$ such that $\omega_1\delta\in\supp(p_\delta)$ and $p_\delta\Vdash\dot{\eta}_{\omega_1\delta}{\upharpoonright}(\dot{b}\menos n_0)\equiv e$.

   We can assume that
   $\dot b$ is a nice name, more particularly that~\eqref{eq:nice} holds,
   and we can also assume that $p$ is $\lambda$-small.
   So there is some $\delta_0\in F$ such that $B_{\delta_0}\cap(H(p)\cup H(\dot{b}))=\emptyset$.

   Put $a:=\omega_1\delta_0\in B_{\delta_0}$. By~\eqref{eq:S2G}, $a$ is contained in
   an uncountable $R_{\delta_0,e}$-complete $U\subseteq B_{\delta_0}$.
   Recall that by the definition of ``symmetric'',
   there is for each $c\in U$ a $2$G-automorphism $h^c\in \Hgroup^*$ such that
   $h^c(a)=c$ and
   such that $h^c{\upharpoonright}\Bbf_\delta$ is the identity for all $\delta\neq\delta_0$. 
   Hence, by Lemma~\ref{automident}, $\hat{h}_\pi^c(p)=p$ and $(\hat{h}_\pi^c)^*(\dot{b})=\dot{b}$, therefore $p'_c:=\hat{h}_\pi^c(p_{\delta_0})\leq p$ and, since $(\hat{h}_\pi^c)^*(\dot{\eta}_a)=\dot{\eta}_c$,
   \[p'_c\Vdash\dot{\eta}_c{\upharpoonright}(\dot{b}\menos n_0)\equiv e.\]
   Lemma~\ref{GBprop1}(\ref{item:dafterall}) implies that $\la p'_c:c\in U\ra$ must be an antichain, which contradicts that $\Por_\pi$ is ccc.
\end{proof}

\begin{remark}
   The same argument shows that, for any $g\in\prod_{\delta<\mu}B_\delta$ (in the ground model), $\{\dot{\eta}_{g(\delta)}:\delta<\mu\}$ witnesses $\LCU_{\Rbsp}(\Por_\pi,\mu)$.
\end{remark}


\section{Suslin-\texorpdfstring{$\lambda$}{lambda}-small iterations}\label{sec:main1}

\newcommand{\ac}{\mathrm{ac}}
\newcommand{\anm}{\mathrm{an}}
\newcommand{\cola}{\mathrm{col}}
\newcommand{\up}{\mathrm{up}}
\newcommand{\op}{\mathrm{op}}
\newcommand{\unnm}{\mathrm{un}}
\newcommand{\nice}{\mathrm{nice}}
\newcommand{\ncp}{\mathrm{ncp}}

We now investigate suitable iterations where the iterand
$\Qnm_\zeta$
at step $\zeta>0$ (i.e., after the initial FS product) is
\begin{enumerate}
    \item
either a \emph{restricted (also called: partial)} Suslin ccc poset (e.g., random forcing
evaluated in some $V^{\Por^-_\zeta}$ for some complete
subforcing $\Por^-_\zeta$ of $\Por_\zeta$);
\item
or the FS product of (in our application: at most $|\pi|$-many)
${<}\lambda$-size posets of reals.
\end{enumerate}

More formally:

\begin{definition}\label{DefSs}
   Let $\lambda$ be an uncountable cardinal. A
   \emph{Suslin-$\lambda$-small} iteration (abbreviated S$\lambda$s) is a suitable iteration $\Por$ with the following properties:
   \begin{enumerate}[({S}1)]
       \item $\pi\menos\pi_1$ is partitioned into two sets $\Sigma^\tbf$ and $\Pi^\tbf$. 




       \item\label{item:souslin} For $\xi\in \Sigma^\tbf$,
          \begin{enumerate}[(i)]
          \item $\Por^{-}_\xi$ is a complete subposet of $\Por_\xi$,
              \item $\Sor^\tbf_\xi$ is a definition of a Suslin ccc poset (with parameters in the ground model),
              \item $\Qnm_\xi$ is a $\Por_\xi$-name for $(\Sor^\tbf_\xi)^{V^{\Por^{-}_\xi}}$.
          \end{enumerate}

       \item\label{item:prod} 
       %
       For $\xi\in \Pi^\tbf$,
       \begin{enumerate}[(i)]
        \item $\Xi^\tbf_\xi$ is a set
        in the ground model,
        \item each element of $\Xi^\tbf_\xi$
        is a
        $\Por_\xi$-name
         $\dot Q $
       for a poset of size\footnote{That is, each element of $\Xi^\tbf_\xi$ is forced to have size ${<}\lambda$, whereas the cardinality of $\Xi^\tbf_\xi$ may be as large as we want.}
       ${<}\lambda$ consisting of reals,
       \item
       $\Qnm_\xi$ is (the $\Por_\xi$-name for) the FS product of $\Xi_\xi$.
       \end{enumerate}


   \end{enumerate}
\end{definition}



\begin{remark}
   Regarding (S\ref{item:prod}), recall that
   our setting  requires
   $\Qnm_\xi$ (to be forced by $\Por_\xi$) to be ccc (as suitable iterations have to be ccc).
   In contrast, in (S\ref{item:souslin}),
   $\Qnm_\xi$ will be always ccc ``for free''
   (in $V^{\Por^{\tbf}_\xi}$ as well as in $V^{\Por^{-}_\xi}$), as it is an evaluation of a Suslin ccc definition (see~\cite{JSsuslin}).
\end{remark}

We now show that we can replace such an
iteration $\la \Por'_\zeta,\Qnm'_\zeta:\zeta\in\pi\ra$ with
an \emph{isomorphic} version
$\la\Por_\zeta,\Qnm_\zeta:\zeta\in\pi\ra$:
The only difference will
be in steps $\zeta\in \Pi$, where we
select (hereditarily) nice names for the factors
$\dot Q\in \Xi_\zeta$ and make sure that $\Qnm_\zeta$ is self-indexed.
In addition, we will define
a dense subset $\Por^*$ of hereditarily $\lambda$-small
conditions, an extended
``refined history domain'' $\pi^+$, and
a ``refined history'' $H^*:\Por^*\to \mathcal P(\pi^+)$. These are formalized in the following notions.

\begin{definition}\label{DefSstidy}
   Let $\lambda$ be an uncountable cardinal. A
   \emph{tidy Suslin-$\lambda$-small} iteration is a Suslin-$\lambda$-small iteration $\Por$ with the following additional components and properties:
   \begin{enumerate}[(1)]
      \item For $\xi\in\pi\menos\pi_1$, $\Por^*_\xi$ is a dense subset of $\Por^\tbf_\xi$.

      \item $\Por^*_{\pi_1}=\Por_{\pi_1}$.


       \item If $\xi\in\Sigma$ and $p\in\Por^*_{\xi+1}$ then $p{\restriction}\xi\in\Por^*_\xi$, $p(\xi)$ is a nice $\Por^*_\xi$-name of a real and $\Vdash_{\Por_\xi}p(\xi)\in\Sor_\xi\cap V^{\Por^-_\xi}$.

       \item For $\xi\in \Pi^\tbf$, $\Xi_\xi$ is composed of nice $\Por^*_\xi$-names for real-number-posets of size ${<}\lambda$ (See Definition~\ref{Defnicenm}(2)).

       In addition, if $p\in\Por^*_{\xi+1}$ then the following is satisfied:
       \begin{enumerate}[(i)]
           \item $p{\upharpoonright}\xi\in\Por^*_\xi$.
           \item $\dom p(\xi)$ is decided by $p{\upharpoonright}\xi$, that is, $p{\upharpoonright}\xi\Vdash_{\Por^*_\xi}$``$\dom p(\xi)=d^p_\xi$" for some finite $d^p_\xi\subseteq\Xi^\tbf_\xi$.
           \item For each $\dot{Q}\in\dom p(\xi)$, $p(\xi,\dot{Q})$ is a nice $\Por^*_\xi$-name of a real and $\Vdash_{\Por^*_\xi}p(\xi,\dot{Q})\in\dot{Q}$.
           \item $p(\xi)=\la p(\xi,\dot{Q})):\dot{Q}\in\dom p(\xi)\ra$ (in particular, $p(\xi)$ is a $\Por^*_\xi$-name).
       \end{enumerate}

       \item If $\pi_1\leq\xi<\pi$ then $\Por^*_\xi\subseteq\Por^*_{\xi+1}$.

       \item If $\gamma\in(\pi_1,\pi]$ is limit then $\Por^*_\gamma=\bigcup_{\xi<\gamma}\Por^*_\xi$.
   \end{enumerate}
   Denote $\Por^*:=\Por^*_\pi$.
\end{definition}

Note that tidy S$\lambda$s iterations are coherent in the sense that $\Por^*_\eta\cap\Por_\xi=\Por^*_\xi$ for any $\pi_1\leq\xi\leq\eta\leq\pi$. Conditions (5) and (6) were included to guarantee this.

\begin{definition}\label{def:refhistory}
   Let $\Por$ be a tidy S$\lambda$s iteration.
   \begin{enumerate}[(1)]
       \item For $\pi_1\leq\xi\leq\pi$ define the \emph{refined history domain} $\xi^+=\xi\cup\bigcup_{\zeta\in\xi\cap\Pi}\{\zeta\}\times\Xi_\zeta$.

       \item For $p\in\Por^*$ and a $\Por^*$-name $\tau$ we define the \emph{refined history} $H^*(p)\subseteq\pi^+$ and $H^*(\tau)\subseteq\pi^+$ as follows. For $\pi_1\leq\xi\leq\pi$ we define $H^*$ by recursion on $\xi$ for $p\in\Por^*_\xi$ and for a $\Por^*_\xi$-name $\tau$.
       \begin{enumerate}[(i)]
           \item For $p\in\Por^*_{\pi_1}$, $H^*(p):=H(p)$.
           \item For $\xi=\zeta+1$ and $p\in\Por^*_{\zeta+1}$,
                $H^*(p)=H^*(p{\restriction}\zeta)$ when $\zeta\notin\supp p$, otherwise:
                \begin{itemize}
                    \item if $\zeta\in\Sigma$ then
                    \[H^*(p):=H^*(p{\restriction}\zeta)\cup\{\zeta\}\cup H^*(p(\zeta));\]

                    \item if $\zeta\in\Pi$ then
                    \[H^*(p):=H^*(p{\restriction}\zeta)\cup\{\zeta\}\cup(\{\zeta\}\times\dom(p(\zeta)))\cup\bigcup_{\dot{Q}\in\dom(p(\zeta)}(H^*(\dot{Q})\cup H^*(p(\zeta,\dot Q)).\]
                \end{itemize}

            \item When $\xi>\pi_1$ is limit and $p\in\Por^*_\xi$, then $H^*(p)$ has already been defined (because $p\in\Por_\zeta$ for some $\zeta<\xi$).

            \item For any $\Por^*_\xi$-name $\tau$ define, by $\in$-recursion,
                 \[H^*(\tau):=\bigcup\{H^*(\sigma)\cup H^*(p):(\sigma,p)\in\tau\}.\]
       \end{enumerate}
   \end{enumerate}
\end{definition}

Tidy S$\lambda$s iterations have many features that ease its manipulation, in particular, they have $\lambda$-small history.

\begin{lemma}\label{smallH}
    Let $\Por$ be a tidy S$\lambda$s iteration with $\lambda$ regular. Then, for any $p\in\Por^*$:
    \begin{enumerate}[(a)]
        \item $|H^*(p)|<\lambda$.
        \item $H(p)=H^*(p)\cap\pi$.
        \item $H(\tau)=H^*(\tau)\cap\pi$ for any $\Por^*_\pi$-name $\tau$.
    \end{enumerate}
    In particular, $\Por$ has $\lambda$-small history.
\end{lemma}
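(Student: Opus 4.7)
The plan is to prove (a), (b), and (c) simultaneously by induction on $\xi\in[\pi_1,\pi]$, treating (a), (b) as statements about conditions $p\in\Por^*_\xi$ and, at each stage, establishing (c) for $\Por^*_\xi$-names by a secondary $\in$-recursion on $\tau$ that invokes the already-proved (a), (b) at the same stage. The ``in particular'' clause then drops out: once (a) and (b) are established, $|H(p)|\le|H^*(p)|<\lambda$, and since the blocks $B_\delta$ are pairwise disjoint, the number of $\delta<\pi_0$ that meet $H(p)$ is at most $|H(p)|<\lambda$.

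The base case $\xi=\pi_1$ is immediate from $\Por^*_{\pi_1}=\Por_{\pi_1}$ together with the stipulation $H^*(p)=H(p)=\supp(p)$, which is finite, and (c) at this stage follows by $\in$-recursion using $H^*(\check x)=H(\check x)=\emptyset$ for standard names. The limit case $\xi>\pi_1$ uses Definition~\ref{DefSstidy}(6): every $p\in\Por^*_\xi$ already lies in some $\Por^*_\zeta$ with $\zeta<\xi$, so (a) and (b) transfer from the IH, and (c) follows by $\in$-recursion since for each $(\sigma,q)\in\tau$ the condition $q$ lies in some $\Por^*_\zeta$.

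The substance of the proof is the successor step $\xi=\zeta+1$. If $\zeta\notin\supp(p)$ we reduce trivially to stage $\zeta$. If $\zeta\in\Sigma\cap\supp(p)$, then $p(\zeta)$ is a nice $\Por^*_\zeta$-name for a real (Definition~\ref{DefSstidy}(3)), hence determined by a countable antichain $A\subseteq\Por^*_\zeta$; unfolding the $\in$-recursive definition of $H^*$ together with $H^*(\check n)=\emptyset$ yields $H^*(p(\zeta))=\bigcup_{q\in A}H^*(q)$, which by the IH (a) at stage $\zeta$ and regularity of $\lambda$ is of size ${<}\lambda$, giving (a). For (b), the additional contribution to $H^*(p)$ over $H^*(p\restriction\zeta)$ matches, after intersection with $\pi$, the additional contribution to $H(p)$, using the IH (c) at stage $\zeta$ applied to the name $p(\zeta)$. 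The case $\zeta\in\Pi\cap\supp(p)$ is analogous: $d^p_\zeta$ is finite, each $\dot Q\in d^p_\zeta$ is a nice $\Por^*_\zeta$-name of a poset determined by some $B_{\dot Q}\subseteq\Por^*_\zeta$ with $|B_{\dot Q}|<\lambda$, and each $p(\zeta,\dot Q)$ is a nice name for a real determined by a countable antichain. All pieces contribute sets of size ${<}\lambda$ to $H^*(p)$, so regularity of $\lambda$ yields (a); the extra term $\{\xi\}\times\dom(p(\zeta))$ sits entirely in $\pi^+\setminus\pi$, so it contributes nothing when intersecting with $\pi$, and (b) follows as before.

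I expect no real obstacle beyond one bookkeeping identity: the decomposition $H^*(\dot r)=\bigcup_{q\in A}H^*(q)$ for a nice name $\dot r$ determined by $A$, and its analogue for nice names of real-number-posets (see \eqref{eq:Hotherspecial}), should be spelled out by unfolding Definition~\ref{def:refhistory}(2)(iv) together with $H^*(\check x)=\emptyset$ for standard names. Once recorded, every successor step reduces to the regularity of $\lambda$ applied to a union of at most ${<}\lambda$-many sets each of size ${<}\lambda$. The key design feature that makes everything work is precisely that Definition~\ref{DefSstidy} was arranged so that all the antichains and determining sets arising in the tidy decomposition of a condition already lie within $\Por^*_\zeta$ at the appropriate earlier stage, which keeps the recursion inside the dense subset where $H^*$ is defined.
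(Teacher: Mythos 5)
Your proposal follows the same plan as the paper: simultaneous induction on $\xi\in[\pi_1,\pi]$, with the base case $H^*(p)=H(p)=\supp p$ finite, the successor cases unfolding the nice-name decompositions of $p(\zeta)$ (countable determining set in the $\Sigma$ case; finite $d^p_\zeta$ with ${<}\lambda$-determined factors and countably-determined entries in the $\Pi$ case) and invoking regularity of $\lambda$, and the limit case free. The only (minor) difference is that you make explicit the $\in$-recursion by which (c) is established at each stage and then threaded into the proof of (b) at the next stage, whereas the paper compresses this into the remark that (c) follows from (b); your reading is the correct unpacking, and the identity $H^*(\dot r)=\bigcup_{q\in A}H^*(q)$ you flag is exactly what the paper relies on implicitly.
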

\begin{proof}
   We prove (a), (b) and (c) simultaneously for all $p\in\Por^*_\xi$ by recursion on $\pi_1\leq\xi\leq\pi$. It is clear that (c) follows from (b).

   In the case $\xi=\pi_1$, $H^*(p)=\supp p=H(p)$, which is finite.

   For the successor step $\xi=\zeta+1$, assume $\zeta\in\supp p$. If $\zeta\in\Sigma$ then $p(\zeta)$ is a nice $\Por^*_\zeta$-name of a real, so it is determined by some countable $A\subseteq \Por^*_\zeta$. Hence
  \[
     H^*(p) = H^*(p{\restriction}\zeta)\cup\{\zeta\}\cup
     \bigcup\{H^*(r):\, r\in A\}
   \]
   so, by induction hypothesis, $|H^*(p)|<\lambda$.

   Now assume $\zeta\in \Pi$. Since any $\dot Q\in\Xi_\zeta$ is a nice $\Por^*_\zeta$-name for a real-number-poset of size ${<\lambda}$, it is determined by some $B_{\dot Q}$ of size ${<\lambda}$. Hence
   \[
       H^*(\dot Q)=\bigcup_{s\in B_{\dot Q}}H^*(s),\text{\ and }|H^*(\dot Q)|<\lambda,
   \]
   the latter by induction hypothesis.
   On the other hand, for any $\dot Q\in\dom p(\zeta)$, $p(\zeta,\dot Q)$ is a $\Por^*_\zeta$-name of a real, so it is determined by some countable $A_{\dot Q}\subseteq\Por^*_\zeta$. Hence
   \[H^*(p(\zeta,\dot Q))=\bigcup_{r\in A_{\dot Q}}H^*(r),\]
   which have size ${<}\lambda$ by induction hypothesis. As
   \[H^*(p)=H^*(p{\upharpoonright}\zeta)\cup\{\zeta\}\cup(\{\zeta\}\times\dom p(\zeta))
                \cup\bigcup_{\dot{Q}\in\dom p(\zeta)}(H^*(\dot{Q})\cup H^*(p(\zeta,\dot{Q}))),\]
   we get $|H^*(p)|<\lambda$. On the other hand, since $p(\zeta)=\la p(\zeta,\dot Q): \dot Q\in\dom p(\zeta)\ra$,
   \[H(p(\zeta))=\bigcup_{\dot{Q}\in\dom p(\zeta)}(H(\dot{Q})\cup H(p(\zeta,\dot{Q}))),\]
   so we can deduce (b). The limit step is immediate.
\end{proof}

As promised, we show that any Suslin-$\lambda$-small iteration is isomorphic to a tidy one.

\begin{lemma}\label{lem:eqvtidy}
   If $\lambda$ is regular uncountable, then any Suslin-$\lambda$-small iteration is isomorphic to a tidy S$\lambda$s iteration.
\end{lemma}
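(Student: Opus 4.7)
The plan is to build the tidy version $(\Por_\xi,\Qnm_\xi)_{\pi_1\le \xi\le \pi}$ together with dense subsets $\Por^*_\xi$ and isomorphisms $\Phi_\xi:\Por'_\xi\to\Por_\xi$ simultaneously, by recursion on $\xi$, by successively replacing iterands with equivalent nice-name versions. For $\xi=\pi_1$, set $\Por_{\pi_1}:=\Por'_{\pi_1}$, $\Phi_{\pi_1}:=\mathrm{id}$ and $\Por^*_{\pi_1}:=\Por_{\pi_1}$; nothing needs to be done. Limit stages $\gamma$ are forced: $\Por_\gamma$ is the FS limit, $\Phi_\gamma:=\bigcup_{\xi<\gamma}\Phi_\xi$, and $\Por^*_\gamma:=\bigcup_{\xi<\gamma}\Por^*_\xi$, which is dense in $\Por_\gamma$ because any condition in $\Por_\gamma$ already lies in some $\Por_\xi$.

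At a successor step $\xi+1$ with $\xi\in\Sigma$, I keep $\Qnm_\xi$ as the canonical evaluation of the Suslin ccc definition $\Sor_\xi$ in $V^{\Por^-_\xi}$ (where $\Por^-_\xi$ is transferred across $\Phi_\xi$); since the Suslin definition is absolute and $\Phi_\xi$ is an isomorphism, $\Por_{\xi+1}$ is isomorphic to $\Por'_{\xi+1}$. I then set
\[
\Por^*_{\xi+1}:=\{(p,\dot q)\in\Por_{\xi+1}:\ p\in\Por^*_\xi,\ \dot q\text{ is a nice }\Por^*_\xi\text{-name of a real in }\Sor_\xi\cap V^{\Por^-_\xi}\}.
\]
Density follows by first strengthening the $\Por_\xi$-part into $\Por^*_\xi$ (by the induction hypothesis) and then applying Fact~\ref{fact:basicnice}(a) to replace $\dot q$ by an equivalent nice $\Por^*_\xi$-name, whose conditions can be taken inside the dense set $\Por^*_\xi$.

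At a successor step $\xi+1$ with $\xi\in\Pi$, for each $\dot Q'\in \Xi'_\xi$ I use Fact~\ref{fact:basicnice}(b) to pick a nice $\Por^*_\xi$-name $\dot Q$, determined by a subset of $\Por^*_\xi$, such that $\Vdash_{\Por_\xi}\dot Q=\dot Q'$; I let $\Xi_\xi:=\{\dot Q:\dot Q'\in\Xi'_\xi\}$ and take $\Qnm_\xi$ to be the self-indexed FS product of $\Xi_\xi$. The forcing equivalence $\Qnm_\xi\equiv\Qnm'_\xi$ extends $\Phi_\xi$ to the required $\Phi_{\xi+1}$. For $\Por^*_{\xi+1}$, I take the set of conditions $p$ in $\Por_{\xi+1}$ satisfying clauses (i)--(iv) of Definition~\ref{DefSstidy}(4). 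Density is obtained by first using Fact~\ref{fact:specialH2} (applied with $D=\Por^*_\xi$) to find a strengthening $p'\in\Por^*_\xi$ that decides $\dom p(\xi)$ into a finite $d\subseteq\Xi_\xi$ and such that each $p'(\xi,\dot Q)$ is a nice $\Por^*_\xi$-name of a real; this produces a condition in $\Por^*_{\xi+1}$ below $p$.

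Finally, set $\Por:=\Por_\pi$ and $\Por^*:=\Por^*_\pi$. The coherence $\Por^*_\xi\subseteq\Por^*_{\xi+1}$ (interpreting a condition $p\in\Por^*_\xi$ as the pair $(p,\mathbbm 1)\in\Por^*_{\xi+1}$ with $\xi\notin\supp p$) is immediate from the definitions, and the clause at limits ensures $\Por^*_\gamma=\bigcup_{\xi<\gamma}\Por^*_\xi$. The main (routine) obstacle to verify is density at the $\Pi$-successor step, but this is exactly what Fact~\ref{fact:specialH2} was tailored for, combined with the inductive choice of $\Xi_\xi$ as nice names over the previously-built dense set $\Por^*_\xi$. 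The resulting iteration satisfies all clauses of Definition~\ref{DefSstidy} and is isomorphic via $\Phi_\pi$ to the original Suslin-$\lambda$-small iteration $\Por'$.
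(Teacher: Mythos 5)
Your proof is correct and follows essentially the same route as the paper: recursion on $\xi$ with the base and limit cases forced, replacing factors in the $\Pi$-case by equivalent nice names via Fact~\ref{fact:basicnice}(\ref{item:niceQ}) and forming the self-indexed product, and defining $\Por^*_{\xi+1}$ in the $\Sigma$- and $\Pi$-cases exactly as the paper does, with density via Fact~\ref{fact:basicnice}(\ref{item:nicereal}) and Fact~\ref{fact:specialH2} respectively. The only minor imprecision is that in the $\Pi$-step the nice name $\dot Q$ should be chosen equivalent to $\Phi_\xi^*(\dot Q')$ (the image of $\dot Q'$ under the already-constructed isomorphism) rather than to $\dot Q'$ itself, but this is how the paper handles it as well.
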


\begin{proof}
   By recursion
   on $\pi_1\leq\xi\leq\pi$
   we construct the tidy iteration up to $\Por_\xi$, along with its components,
   and the isomorphism $i_\xi:\Por'_\xi\to\Por_\xi$. We also guarantee that
   $i_\xi$ extends $i_\zeta$ for any $\pi_1\leq\zeta<\xi$.

   \textbf{Case $\bm{\xi=\pi_1}$:}
   Set $\Por^*_{\pi_1}=\Por_{\pi_1}=\Por'_{\pi_1}$ and let $i_{\pi_1}$ be the identity function.

   \textbf{Case $\bm{\xi=\zeta+1}$ with $\bm{\zeta\in \Sigma}$:}
   As $i_\zeta:\Por'_\zeta\to\Por_\zeta$ is an isomorphism, we let
   $\Por^-_\zeta=i_{\zeta}[\Por^{\prime-}_\zeta]$, which is clearly a complete
   subforcing of $\Por_\zeta$, and evaluate
   $\Sor_\zeta$ accordingly. Note that $i_{\zeta}$ can be extended to
   an isomorphism $i_{\zeta+1}:\Por'_{\zeta+1}\to\Por_{\zeta+1}$ in a natural way.

   We define
   $\Por_{\zeta+1}^*$ as the set of pairs
   $(p,\dot q)$ where $p\in \Por_\zeta^*$,
   $\dot q$ is a $\Por^*_\zeta$ nice name
   for a real, and $p \Vdash \dot q\in \Sor_\zeta\cap V^{\Por_\zeta^-}$.
   This is dense according to Fact~\ref{fact:basicnice}(\ref{item:nicereal}).

   \textbf{Case $\bm{\xi=\zeta+1}$ with $\zeta\in \Pi$:}
   Fix a $\dot Q'$ in $\Xi'_\zeta$.
   As $i_\zeta^*(\dot Q')$ is forced by $\Por_\zeta$ to have size
   ${<}\lambda$,
   according to Fact~\ref{fact:basicnice}(\ref{item:niceQ}),
   %
   there is an equivalent
   $\Por^*_\zeta$-nice name $\dot Q$
   determined by
   $B_{\dot Q}\subseteq \Por^*_\zeta$
   of size ${<}\lambda$.
   Let $\Xi_\zeta$ be the set of all these
   names, and define
   $\Qnm_\zeta$ to be the self-indexed FS product
   of the $\dot Q$ in $\Xi_\zeta$. We can obtain the isomorphism $i_{\xi+1}$ in
   a natural way.

   We define $\Por_{\zeta+1}^*$ to consist
   of the $(p,\dot q)$ as in Fact~\ref{fact:specialH2}
   (using $D=\Por_\zeta^*$).

   \textbf{Case $\xi>\pi_1$ limit:}
   As $\Por$ is a FS iteration, we (have to) set
   $\Por_\xi=\bigcup_{\zeta<\xi}\Por_\zeta$;
   and we set
   $\Por^*_\xi:=\bigcup_{\zeta<\xi}\Por^*_\zeta$ and $i_\xi:=\bigcup_{\zeta<\xi}i_\zeta$.
\end{proof}

For later reference, note:
   Assume that we can extend some $2$G-automorphism $f$ to $\Por_\zeta$, and that
   $\dot Q\in \Xi_\zeta$.
   Set $\supp(f):=\bigcup\{B_\delta:\, f{\restriction} B_\delta\ne\textrm{id}_{B_\delta}\}$.
   Then
   \begin{equation}\label{eq:unclear}
    \supp(f)\cap H^*(\dot Q)=\emptyset\text{ implies }
    \hat f^*_\xi(\dot Q)=\dot Q.
   \end{equation}
   This follows
   from Lemmas~\ref{automident}(b) and~\ref{smallH}(c).

In our applications, $\Por^-_\xi$ has the following form.

\begin{definition}\label{def:supprestr}
   Let $\Por$ be a tidy S$\lambda$s iteration. For any $X\subseteq\pi^+$, define $\Por^*{\upharpoonright}X:=\{p\in\Por^*_\pi: H^*(p)\subseteq X\}$.
\end{definition}

Note that generally $\Por^*{\restriction} X$ will not be a complete subforcing
of $\Por^*$; but we will only be interested in the case where it is, see Lemma~\ref{smallrestr}.

\begin{lemma}\label{item:restr}
    Let $\Por$ be a tidy S$\lambda$s iteration with $\lambda$ uncountable regular, and let $\mu\geq\lambda$ be regular and $\aleph_1$-inaccessible\footnote{Recall that a cardinal $\mu$ is \emph{$\kappa$-inaccessible} if $\theta^\nu<\mu$ for any cardinals $\theta<\mu$ and $\nu<\kappa$.}. If $X\subseteq\pi^+$ and $|X|<\mu$ then
    $|\Por^*{\upharpoonright}X|< \mu$.
\end{lemma}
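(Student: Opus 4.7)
I would prove by induction on $\xi\in[\pi_1,\pi]$ that $|\Por^*_\xi\upharpoonright X|<\mu$ for every $X\subseteq\xi^+$ with $|X|<\mu$. The $\aleph_1$-inaccessibility of $\mu$ provides the key closure property: for every $\kappa<\mu$, the number of countable (or $<\lambda$-sized, as $\lambda\le\mu$) subsets of a set of size $\kappa$ is again $<\mu$, and similarly for finite sequences.

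For the base case $\xi=\pi_1$, a condition $p\in\Por_{\pi_1}$ has $H^*(p)=\supp(p)$, which is finite; so $p$ is determined by a finite $F\subseteq X\cap\pi_1$ and a function from $F\times n$ into $2$ for some $n<\omega$. This gives at most $|X|\cdot\aleph_0<\mu$ conditions. The successor case $\xi=\zeta+1$ with $\zeta\in\Sigma$: by the structure of tidy iterations, such a $p\in\Por^*_{\zeta+1}\upharpoonright X$ is determined by $p{\restriction}\zeta\in\Por^*_\zeta\upharpoonright X$ (we may assume $\zeta\in X$) and a nice $\Por^*_\zeta$-name $p(\zeta)$ for a real determined by a countable antichain $A\subseteq\Por^*_\zeta\upharpoonright X$. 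By induction $|\Por^*_\zeta\upharpoonright X|<\mu$, so there are at most $|\Por^*_\zeta\upharpoonright X|^{\aleph_0}<\mu$ such names.

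For the successor case $\zeta\in\Pi$: the condition $p$ is determined by $p{\restriction}\zeta\in\Por^*_\zeta\upharpoonright X$, a finite $d^p_\zeta\subseteq\{\dot Q\in\Xi_\zeta:(\zeta,\dot Q)\in X\}$ (finitely many of ${<}\mu$ choices), and for each $\dot Q\in d^p_\zeta$, a nice $\Por^*_\zeta$-name $p(\zeta,\dot Q)$ for a real determined by a countable antichain in $\Por^*_\zeta\upharpoonright X$. Again we get ${<}\mu$ many possibilities in total. For the limit case $\gamma\in(\pi_1,\pi]$: if $\cof(\gamma)\ge\mu$, then since $|X\cap\gamma|<\mu$ there is some $\zeta<\gamma$ with $X\cap\gamma\subseteq\zeta^+$, and so $\Por^*_\gamma\upharpoonright X=\Por^*_\zeta\upharpoonright X$, which has size ${<}\mu$ by induction. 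If $\cof(\gamma)<\mu$, pick a cofinal sequence $\langle\zeta_i:i<\cof(\gamma)\rangle$; then $\Por^*_\gamma\upharpoonright X=\bigcup_{i<\cof(\gamma)}\Por^*_{\zeta_i}\upharpoonright X$, a union of ${<}\mu$ sets each of size ${<}\mu$, which has size ${<}\mu$ by regularity of $\mu$.

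The only mildly subtle point is the accounting at the successor step $\zeta\in\Pi$: one must carefully distinguish the indices $\dot Q$ of the product (each contributing an element $(\zeta,\dot Q)$ to $H^*(p)$ and hence lying in $X$) from the internal ${<}\lambda$-sized determining set of the nice name $\dot Q$ itself. In $\Por^*\upharpoonright X$ we do \emph{not} require $H^*(\dot Q)\subseteq X$ for $\dot Q\in\Xi_\zeta$ unless $\dot Q\in d^p_\zeta$; when it is, the determining set of $\dot Q$ (of size ${<}\lambda\le\mu$) lies inside $X$ by the definition of $H^*$, so we again count ${<}\mu$ possibilities for each $\dot Q$. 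After that, the bookkeeping is routine.
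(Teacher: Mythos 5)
Your proof follows the same inductive strategy as the paper's, with the same case split (base case $\pi_1$, successor via $\Sigma$/$\Pi$ using $\aleph_1$-inaccessibility to bound the $\aleph_0$-th power of $|\Por^*_\zeta\upharpoonright X_0|$, limit split on $\cof(\gamma)$ vs.\ $\mu$) and the same accounting at the $\Pi$ successor step; it is correct. The only slight imprecision is in your final remark: what lies inside $X$ is not the determining antichain $B_{\dot Q}$ itself but rather $\bigcup_{s\in B_{\dot Q}}H^*(s)$, i.e.\ $B_{\dot Q}\subseteq\Por^*_\zeta\upharpoonright X_0$ -- which is what the count actually uses and what you already invoke correctly in the main body of the argument.
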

\begin{proof}
   By induction on $\xi\in[\pi_1,\pi]$ we show that, whenever $X\subseteq\xi^+$ has size ${<}\mu$, $|\Por^*{\upharpoonright}X|<\mu$.
   For $\xi=\pi_1$, it is clear that $|\Por^*{\upharpoonright}X|=\max\{|\omega\times X|,1\}<\mu$.

   For limit $\xi>\pi_1$, $\Por^*{\upharpoonright}X=\bigcup_{\eta\in c}\Por^*{\upharpoonright}(X\cap \eta^+)$ where $c$ is a cofinal subset of $\xi$ of size $\cf(\xi)$.
   If $\cf(\xi)<\mu$ then
   $|\Por^*{\upharpoonright}X|<\mu$ because it is a union of ${<}\mu$ many sets of size ${<}\mu$; if $\cf(\xi)\geq\mu$ then $X\subseteq\eta^+$ for some $\eta<\xi$, so
  $|\Por^*{\upharpoonright}X|<\mu$ by induction hypothesis.

   For the successor step $\xi=\zeta+1$, assume $X\subseteq(\zeta+1)^+$ and $X\nsubseteq\zeta^+$ (the non-trivial case). Put $X_0:=X\cap\zeta^+$. By induction hypothesis,
   $|\Por^*{\upharpoonright}X_0|<\mu$
   and, since $\mu$ is $\aleph_1$-inaccessible,
   there are at most $|\Por^*{\upharpoonright}X_0|^{\aleph_0}<\mu$ many nice $\Por^*{\upharpoonright}X_0$-names of reals.

   Let $p\in\Por^*{\upharpoonright}X$.
   If $\xi\in \Sigma$ then $p(\xi)$ is a nice $\Por^*{\upharpoonright}X_0$-name of a real; if $\xi\in \Pi$, then $p(\xi)$ is determined by a finite partial function from $(\{\xi\}\times\Xi_\xi)\cap X$ into the set of nice $\Por^*{\upharpoonright}X_0$-names of reals, and there are ${<}\mu$-many such finite partial functions. Hence, $|\Por^*{\upharpoonright}X|<\mu$.
\end{proof}

\begin{corollary}\label{cor:small}
Let $\Por$ be a tidy S$\lambda$s iteration
with $\lambda$ regular.
\begin{enumerate}[(a)]
    \item\label{item:quaxa} $\Por$ has $\lambda$-small history.
    \item\label{item:quaxb} Every $p\in \Por^*$
        is an element
        of $\Por^*{\restriction} X$
        from some $X\subseteq\pi^+$
        of size ${<}\lambda$.
        \item\label{item:quaxc}
        For every $\Por$-name
        of a real
        there is an equivalent
        $\Por^*{\restriction} X$-name
        for some $X\subseteq\pi^+$ of size ${<}\lambda$.
    \item\label{item:quaxd} Assume that $|\pi|^{\aleph_0}=|\pi|$, that $\lambda\le |\pi|^+$, and that
        $|\Xi_\zeta|\le |\pi|$ for each $\zeta\in \Pi$.
        Then $|\Por^*|\le |\pi|$.
\end{enumerate}
\end{corollary}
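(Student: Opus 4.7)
My plan is to obtain all four parts as routine consequences of Lemma~\ref{smallH}, Lemma~\ref{item:restr}, and the density of $\Por^*$ in $\Por$ built into Definition~\ref{DefSstidy}. No genuinely new ingredient is needed.

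For (a), given $p\in\Por$, density of $\Por^*$ yields $q\le p$ with $q\in\Por^*$, and then $|H(q)|\le|H^*(q)|<\lambda$ by Lemma~\ref{smallH}(a)--(b); since any $\delta<\pi_0$ with $H(q)\cap B_\delta\ne\emptyset$ contributes at least one ordinal to $H(q)$, the set of such $\delta$ has size ${<}\lambda$, so $q$ is $\lambda$-small. For (b), the choice $X:=H^*(p)$ works immediately, with $|X|<\lambda$ from Lemma~\ref{smallH}(a).

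For (c), I would apply Fact~\ref{fact:basicnice}(\ref{item:nicereal}) with the dense set $D:=\Por^*$ to replace a given $\Por$-name for a real by an equivalent nice name $\dot r$ determined by a countable $A\subseteq\Por^*$. By~\eqref{eq:Hotherspecial}, $H^*(\dot r)=\bigcup_{p\in A}H^*(p)$ is a countable union of sets of size $<\lambda$, hence has size $<\lambda$ since $\lambda$ is uncountable and regular. Setting $X:=H^*(\dot r)$ puts every $p\in A$ into $\Por^*{\upharpoonright}X$, making $\dot r$ a $\Por^*{\upharpoonright}X$-name.

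For (d), the plan is induction on $\xi\in[\pi_1,\pi]$ to prove $|\Por^*_\xi|\le|\pi|$. The base case $\xi=\pi_1$ is immediate: $\Por_{\pi_1}$ is the FS product of the $\Gor_{\Bbf_\delta}$ for $\delta<\pi_0$, each of size $\aleph_1$, so $|\Por_{\pi_1}|\le|\pi_1|\cdot\aleph_1\le|\pi|$. At a successor step $\xi=\zeta+1$, a condition is a pair $(p,\dot q)$ with $p\in\Por^*_\zeta$: when $\zeta\in\Sigma$, $\dot q$ is a nice $\Por^*_\zeta$-name for a real; when $\zeta\in\Pi$, $\dot q$ is a finite partial function from $\Xi_\zeta$ into such names. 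Combining the induction hypothesis, the assumption $|\pi|^{\aleph_0}=|\pi|$, and $|\Xi_\zeta|\le|\pi|$ yields $\le|\pi|$ possibilities for $\dot q$, so $|\Por^*_{\zeta+1}|\le|\pi|$. At a limit $\xi$, $\Por^*_\xi=\bigcup_{\zeta<\xi}\Por^*_\zeta$ has size $\le|\xi|\cdot|\pi|=|\pi|$. The assumption $\lambda\le|\pi|^+$ enters only to guarantee that each $\dot Q\in\Xi_\zeta$ (of size $<\lambda$) is coded by $\le|\pi|$ pieces of data. No significant obstacle is expected in any of the four parts.
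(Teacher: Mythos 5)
Your treatment of (a), (b) and (c) coincides with the paper's (the paper likewise takes $X:=H^*(p)$ for (b) and reduces (c) to (b) via a nice $\Por^*$-name). For (d), the paper simply invokes Lemma~\ref{item:restr}: set $\mu:=|\pi|^+$, observe that $\mu$ is $\aleph_1$-inaccessible because $|\pi|^{\aleph_0}=|\pi|$, compute $|\pi^+|\le|\pi|\cdot\sup_\zeta|\Xi_\zeta|\le|\pi|$, and conclude $|\Por^*|=|\Por^*{\restriction}\pi^+|<\mu$. You instead redo the induction on $\xi$ inline, which amounts to reproving the special case $X=\pi^+$ of Lemma~\ref{item:restr}. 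Both arguments are correct; the paper's is shorter because it reuses a lemma it already has, while yours is more self-contained. Your inductive steps check out: at successors you correctly bound nice $\Por^*_\zeta$-names by $|\Por^*_\zeta|^{\aleph_0}$ (handled by $|\pi|^{\aleph_0}=|\pi|$), and for $\zeta\in\Pi$ you only need $|\Xi_\zeta|\le|\pi|$ for the domain of $\dot q$, not the internal size of each $\dot Q$. One small remark: your closing sentence, that $\lambda\le|\pi|^+$ serves to ensure each $\dot Q\in\Xi_\zeta$ is coded by $\le|\pi|$ pieces of data, is not actually used in your counting; your argument as written never looks inside the names $\dot Q$, and so does not rely on $\lambda\le|\pi|^+$. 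In the paper's route that hypothesis is genuinely used (Lemma~\ref{item:restr} requires $\mu\ge\lambda$, and $\mu=|\pi|^+$).
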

\begin{proof}
   (\ref{item:quaxa})  follows from
   Lemma~\ref{smallH}(a),(b);
   (\ref{item:quaxb}) follows  from
   Lemma~\ref{smallH}(a)
   (using $X:=H^*(p)$); and (\ref{item:quaxc}) follows from (\ref{item:quaxb})
   (use a nice $\Por^*$-name for a real).

   For (\ref{item:quaxd}), set $\mu=|\pi|^+$ (the cardinal successor). Note that $\mu$ is ${<}\aleph_1$-inaccessible because $|\pi|^{\aleph_0}=|\pi|$, and
   $|\pi^+|\le |\pi|\times \sup_{\zeta\in\Pi}\{|\Xi_\zeta|\}\le |\pi|<\mu$.
   Therefore
   $|\Por^*|=|\Por^*{\restriction} \pi^+|<\mu$
   by Lemma~\ref{item:restr}
   (for $X=\pi^+$).
\end{proof}

In our applications, $\Por^-_\xi=\Por^*{\upharpoonright}C_\xi$ with $C_\xi\subseteq\xi^+$
for all $\xi\in\Sigma$.
We will now show how to build symmetric S$\lambda$s-iterations:

\begin{definition}\label{def:closed}
   Let $\Por$ be a tidy S$\lambda$s iteration, and let $h:\pi_1\to\pi_1$ be a $2$G-automorphism.
   \begin{enumerate}[(1)]
       \item Let $\xi\in\Pi$. We say that \emph{$\Xi_\xi$ is closed}, if, whenever $h\in\Hgroup^*$ and $\hat{h}_\xi$ can be defined (see Definition~\ref{DefAutom}), $\dot Q\in\Xi_\xi$ implies $\hat{h}^*_\xi(\dot Q)\in\Xi_\xi$
       (where $\Hgroup^*$ is the group of $2$G-automorphisms fixed in Definition~\ref{DefAutom}(5)).

       \item  We say that $C\subseteq\pi^+$ is \emph{closed} if, for any $h\in\Hgroup^*$, it satisfies:
   \begin{enumerate}[(i)]
       \item For any $\delta<\pi_0$, $B_\delta\cap C\neq\emptyset$
        implies $B_\delta\subseteq C$.
       \item For any $\xi\in\Pi$, whenever $\hat{h}_\xi$ can be defined, if $(\xi,\dot{Q})\in C$ then $(\xi,\hat{h}^*_\xi(\dot Q))\in C$ and $\xi\in C$.
   \end{enumerate}
   \end{enumerate}
\end{definition}

\begin{lemma}\label{lem:kjgeqwtr}
   Assume that $\Por$ is a tidy S$\lambda$s iteration
   such that
   the following requirements are satisfied:
   \begin{enumerate}[(I)]
       \item\label{item:blaprod} For any $\xi\in\Pi$, $\Xi_\xi$ is closed.
       \item\label{item:blasigma} For any $\xi\in\Sigma$, $\Por^-_\xi=\Por^*{\restriction}C_\xi$ where $C_\xi\subseteq\xi^+$ is closed.
   \end{enumerate}
   Then we get:
   \begin{enumerate}[(a)]
       \item\label{item:symmetric}
           $\Por$ is symmetric (i.e., $h$-symmetric for all $h\in\Hgroup^*$).
     \item $\hat h[\Por^*{\restriction} C]=
           \Por^*{\restriction} C$ for all
           closed $C\subseteq \pi^+$.
   \end{enumerate}
\end{lemma}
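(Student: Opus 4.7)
The plan is to prove (a) and (b) by a simultaneous induction on $\xi\in[\pi_1,\pi]$, carried out once for each fixed $h\in\Hgroup^*$; at stage $\xi$ I would establish both (A$_\xi$) ``$\hat h_\xi$ is defined'' and (B$_\xi$) ``$\hat h_\xi[\Por^*_\xi\restriction C]=\Por^*_\xi\restriction C$ for every closed $C\subseteq\xi^+$''. The two statements are genuinely interleaved: (A$_{\zeta+1}$) needs (B$_\zeta$) in the $\Sigma$-case, while (B$_{\zeta+1}$) of course needs $\hat h_{\zeta+1}$ to be defined. The reverse inclusion in (B$_\xi$) should come for free by applying the forward inclusion to $h^{-1}\in\Hgroup^*$ and using Lemma~\ref{automident}(c) to identify $\hat h_\xi^{-1}$ with $\widehat{h^{-1}}_\xi$.

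For the base case $\xi=\pi_1$, $\hat h_{\pi_1}$ is supplied by Definition~\ref{DefAutom}(2). For (B$_{\pi_1}$) I would use $H^*(p)=\supp(p)$ together with the fact that each $h\in\Hgroup^*$ maps every $B_\delta$ to itself; the closedness condition (i) then lifts ``$\supp(p)$ meets $B_\delta$'' to ``$B_\delta\subseteq C$'', so $h[\supp(p)]\subseteq C$. The limit stage is immediate from $\Por^*_\xi=\bigcup_{\zeta<\xi}\Por^*_\zeta$, noting that $H^*(p)\subseteq\zeta^+$ for $p\in\Por^*_\zeta$ and that $C\cap\zeta^+$ is closed whenever $C$ is.

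The successor step $\xi=\zeta+1$ is the main content. For (A$_{\zeta+1}$) we need $\Vdash_{\Por_\zeta}\hat h_\zeta^*(\Qnm_\zeta)=\Qnm_\zeta$. If $\zeta\in\Pi$ then $\Qnm_\zeta$ is a self-indexed product of $\Xi_\zeta$; closedness of $\Xi_\zeta$ (hypothesis (I)) combined with Fact~\ref{fact:specialH1} gives the equality at once. The case $\zeta\in\Sigma$ is the main obstacle: here I would invoke (B$_\zeta$) applied to the closed set $C_\zeta$ to conclude that $\hat h_\zeta$ restricts to an automorphism of $\Por^-_\zeta=\Por^*\restriction C_\zeta$; since $\Sor_\zeta$ is Suslin ccc with ground-model parameters, its definition is absolute and invariant, so if $\Qnm_\zeta$ is chosen as the canonical name built out of all $\Por^-_\zeta$-names forced to lie in $\Sor_\zeta$, then $\Qnm_\zeta$ is literally preserved by $\hat h_\zeta^*$.

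Finally, for (B$_{\zeta+1}$), take $p\in\Por^*_{\zeta+1}\restriction C$. If $\zeta\notin\supp(p)$ I appeal directly to (B$_\zeta$). Otherwise $\zeta\in C$ and $\hat h_{\zeta+1}(p)=(\hat h_\zeta(p\restriction\zeta),\hat h_\zeta^*(p(\zeta)))$, so by (B$_\zeta$) it suffices to verify $H^*(\hat h_\zeta^*(p(\zeta)))\subseteq C$. For $\zeta\in\Sigma$, $p(\zeta)$ is a nice $\Por^*_\zeta$-name determined by an antichain $A$ with $\bigcup_{r\in A}H^*(r)\subseteq C\cap\zeta^+$; then (B$_\zeta$) sends $A$ into $\Por^*_\zeta\restriction(C\cap\zeta^+)$, so the refined history of the image lies in $C$. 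For $\zeta\in\Pi$ the same argument handles each nice name $p(\zeta,\dot Q)$; the finite set $\dom(p(\zeta))$ is replaced by $\{\hat h_\zeta^*(\dot Q):\dot Q\in\dom(p(\zeta))\}$, and closedness of $\Xi_\zeta$ together with closedness condition (ii) of $C$ keeps the new indices inside $\Xi_\zeta\cap C$, while the $H^*(\hat h_\zeta^*(\dot Q))\subseteq C$ part is handled as in the $\Sigma$-case by applying (B$_\zeta$) to the determining set of $\dot Q$.
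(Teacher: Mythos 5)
Your proposal is correct and follows essentially the same route as the paper's proof: a simultaneous induction on $\xi$ through stages (A$_\xi$) and (B$_\xi$), with the base case relying on $h[B_\delta]=B_\delta$, the $\Pi$ successor handled via closedness of $\Xi_\zeta$ and the self-indexing (Fact~\ref{fact:specialH1}), the $\Sigma$ successor reduced to (B$_\zeta$) applied to $C_\zeta$ so that $\hat h_\zeta$ restricts to an automorphism of $\Por^-_\zeta$ and therefore fixes the evaluation of $\Sor_\zeta^{V^{\Por^-_\zeta}}$, and the reverse inclusion in (B$_\xi$) obtained by passing to $h^{-1}$ via Lemma~\ref{automident}(c). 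The only cosmetic difference is phrasing (``literally preserved'' versus the paper's direct computation of $\hat h^*_\zeta(\Qnm_\zeta)$), but both argue to the same forced equality required by Definition~\ref{DefAutom}(3)(i).
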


Of course we will have to also
make
sure that the assumption ``$\Por$ is
a tidy S$\lambda$s-iteration'' is satisfied.
The nontrivial points of these assumptions are:
\begin{enumerate}[(I{-b})]
\item
For $\zeta\in \Pi$ the
FS product
$\Qor_\zeta$ is ccc.
\\
(In our case this will be trivial,
as all factors $\dot Q$ will be Knaster).
\item
For $\zeta\in \Sigma$,
$\Por^*{\restriction} C_\zeta$
is a complete subforcing of
$\Por^*$.
\\
We will see in Lemma~\ref{smallrestr}
how to achieve this.
\end{enumerate}

\begin{proof}
   By induction on $\xi\in[\pi_1,\pi]$ we show that $\hat{h}_\xi$ can be defined for any $h\in\Hgroup^*$ (towards (a)), and that (b) is valid for any closed $C\subseteq\xi^+$.\footnote{In this proof we only use that $h\in\Hgroup^*$ implies $h^{-1}\in\Hgroup^*$.}

   \textbf{Case $\bm{\xi=\pi_1}$:} It is clear that $\hat{h}_{\pi_1}$ can be defined;
   (b) is clear because $h[B_\delta]=B_\delta$ for any $\delta<\pi_0$ and $h\in\Hgroup^*$.

   \textbf{Case $\bm{\xi=\zeta+1}$ with $\bm{\zeta\in \Sigma}$:}
   (a): Note that
   $\Vdash\hat h^*_\zeta(\Qor_\zeta)=
   \hat h^*_\zeta(\Sor_\zeta^{V^{\Por^*{\restriction} C_\zeta}})=\Sor_\zeta^{V^{\hat h_\zeta[\Por^*{\restriction} C_\zeta]}}$
   (as $\Sor_\zeta$ only uses parameters from the ground model), which is $\Qnm_\zeta$
   by induction hypothesis (as we assume that $C_\zeta$
   is closed).
   So we can extend $\hat h_\zeta$ to $\hat h_{\zeta+1}$.

   (b): Note that $\xi^+=\zeta^+\cup\{\zeta\}$,
   so if $C\subseteq \xi^+$ is closed
   (and not already a subset of $\zeta^+$),
   then
   $C=C'\cup \{\zeta\}$ with $C'$ closed,
   and $(p,\dot q) \in\Por^*{\restriction} C$
   means that $p\in\Por^*{\restriction}C'$ and
   $\dot q$ is a nice
   $\Por^*{\restriction} C'$-name
   for an element of $\Qnm_\zeta$.
   Then $\hat h^*_\zeta(\dot q)$
   is a nice $\hat h_\zeta[\Por^*{\restriction} C']$-name
   for an element of
   $\hat h^*_\zeta(\Qnm_\zeta)$,
   which is by induction hypothesis a nice
   $\Por^*{\restriction} C'$-name
   for an element of
   $\Qnm_\zeta$, i.e., $\hat h_{\zeta+1}((p,\dot q))\in\Por^*{\restriction} C$.
   This shows that $\hat h_\xi[\Por^*{\restriction} C]\subseteq \Por^*{\restriction} C$.
   As this is also true for the inverse
   of $h$ (because $h^{-1}\in\Hgroup^*$), by Lemma~\ref{automident}(c) we get equality.

   \textbf{Case $\bm{\xi=\zeta+1}$ with $\bm{\zeta\in \Pi}$:}  First note that, by induction hypothesis, $\hat h_\zeta[\Por^*_\zeta]=\Por^*_\zeta$ because $\Por^*_\zeta=\Por^*{\restriction}\zeta^+$ and $\zeta^+$ is closed.

   (a): Since
   $\Qnm_\zeta$
   is a $\Por^*_\zeta$-name
   for the self-indexed FS product
   of the (evaluated)
   set $\Xi_\zeta=\{\dot Q:\, \dot Q\in \Xi_\zeta\}$,
   $\hat h^*_\zeta(\Qnm_\zeta)$ is the
   $\Por^*_\zeta$-name
   for the self-indexed FS product
   of the (evaluated) set
   $\hat h^*_\zeta[\Xi_\zeta]=\{\hat h^*_\zeta(\dot Q):\, \dot Q\in \Xi_\zeta\}$.
   But as the ground model set of names
   $\{\hat h^*(\dot Q):\, \dot Q\in \Xi_\zeta\}$ is identical to the
   ground model set of names $\Xi_\zeta$,
   their evaluations are identical as well (because $\Xi_\xi$ is closed under inverses, and by Lemma~\ref{automident}(c)).
   In other words, $\hat h^*_\zeta(\Qnm_\zeta)=\Qnm_\zeta$, and $\hat h_\zeta$ can be extended to $\Por_\xi$.

   (b): Assume that $C\subseteq \xi^+=\zeta^+\cup\{\zeta\}\cup(\{\zeta\}\times \Xi_\zeta)$
   is closed, and that
   $(p,\dot q)\in \Por^*{\restriction} C$ (but to avoid the trivial case, not in $\Por^*_\zeta$).
   This means that
    $p\in \Por^*{\restriction}
   C'$ for $C'=C\cap \zeta^+$,
   and it determines $\dom(\dot q)=\{\dot Q_1,\dots , \dot Q_n\}$, such that all $(\zeta, \dot Q_i)$ are in $C$ (and each
   $\dot q(\dot Q_i)$ is a nice $\Por^*{\restriction}C'$-name).
   Then $\hat h_\zeta(p)\in \Por^*{\restriction} C'$
   by induction hypothesis, and it determines
   $\dom(\hat h^*_\zeta(\dot q))=\{\hat h^*_\zeta(\dot Q_1),\dots , \hat h^*_\zeta(\dot Q_n)\}$ (and each
   $\hat h^*_\zeta(\dot q)(\dot Q_i)$ is a $\Por^*{\restriction}C'$-nice name).
   Accordingly $\hat h_\xi((p,\dot q))\in \Por^*{\restriction} C$ as required.

   We conclude that
   $\hat h_\xi[\Por^*{\restriction}C]\subseteq\Por^*{\restriction}C$, but equality holds because the same is true for $h^{-1}$.

   \textbf{Case $\bm{\xi}$ limit:}
   By induction hypothesis, $\hat{h}_\zeta$ is defined for all $\zeta<\xi$, so $\hat h_\xi$ is defined (as its union). On the other hand, if $C\subseteq\xi^+$ is closed then $C=\bigcup_{\zeta<\xi}C\cap\zeta^+$ where each $C\cap\zeta^+$ is closed, so $\hat h_\xi[\Por^*{\restriction}C]=\Por^*{\restriction}C$ by induction hypothesis.
\end{proof}

We address some few facts about closed sets.

\begin{lemma}\label{lem:closure}
    Let $\Por$ be a symmetric tidy S$\lambda$s iteration. Then:
    \begin{enumerate}[(a)]
        \item\label{item:sjlgclosed} The union of closed sets
     is closed.

        \item
     If $A\subseteq \pi^+$ has size ${<}\mu$, with $\mu\ge\max\{\lambda,\aleph_2\}$ uncountable regular,
     then the closure $\overline A$ of $A$
     (the smallest closed set containing $A$)
     has size ${<}\mu$.
    \end{enumerate}
\end{lemma}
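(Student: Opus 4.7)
My plan is to verify (a) directly from the definition, and then to reduce (b) to iterating the closure operations countably many times while controlling cardinalities at each stage using the $H^*$-bookkeeping just developed.

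For (a), given a family $\{C_i : i\in I\}$ of closed sets and $C:=\bigcup_{i\in I}C_i$, both closure conditions transfer pointwise: if $B_\delta\cap C\ne\emptyset$ pick $i$ with $B_\delta\cap C_i\ne\emptyset$ and note $B_\delta\subseteq C_i\subseteq C$; likewise any $(\xi,\dot Q)\in C$ lies in some $C_i$, whence $(\xi,\hat h_\xi^*(\dot Q))\in C_i\subseteq C$ and $\xi\in C_i\subseteq C$ for every $h\in\Hgroup^*$ (recall that $\Por$ is symmetric, so all $\hat h_\xi$ are defined).

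For (b), (a) already ensures that $\overline A$ exists as the intersection of all closed supersets of $A$, so it suffices to bound its size. I will build it from below: set $A_0:=A$ and
\begin{multline*}
A_{n+1}:=A_n\,\cup\,\bigcup\{B_\delta : B_\delta\cap A_n\ne\emptyset\} \\
\cup\,\bigl\{\xi:(\xi,\dot Q)\in A_n\text{ for some }\dot Q\bigr\}\,\cup\,\bigl\{(\xi,\hat h_\xi^*(\dot Q)):(\xi,\dot Q)\in A_n,\, h\in\Hgroup^*\bigr\}.
\end{multline*}
Then $\overline A=\bigcup_{n<\omega}A_n$ is easily seen to be closed (every witness for a closure condition appears at some stage $n$ and is absorbed at stage $n+1$) and minimal (any closed superset of $A$ contains each $A_n$ inductively).

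The main obstacle is showing $|A_n|<\mu$ by induction on $n$. The block-filling clause contributes at most $|A_n|\cdot\aleph_1<\mu$ new elements, since $\mu\geq\aleph_2$ is regular. The delicate clause is bounding, for a single $(\xi,\dot Q)\in A_n$, the size of the orbit $\{\hat h_\xi^*(\dot Q):h\in\Hgroup^*\}$. By Lemmas~\ref{automident}(b) and~\ref{smallH}(c), $\hat h_\xi^*(\dot Q)$ depends on $h$ only through $h\restriction \bigcup_{\delta\in K}B_\delta$, where $K:=\{\delta<\pi_0:B_\delta\cap H^*(\dot Q)\ne\emptyset\}$ has size ${<}\lambda$ by Lemma~\ref{smallH}(a) (extended to the nice names $\dot Q\in\Xi_\xi$). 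Since distinct generators $h_{a,b}^{\delta}$ and $h_{c,d}^{\delta'}$ (for $\delta\ne\delta'$) commute, and each $\delta$-block generates a subgroup of $\Hgroup^*$ of size ${\leq}\aleph_1$, the set of possible restrictions to $\bigcup_{\delta\in K}B_\delta$ has cardinality ${\leq}|K|\cdot\aleph_1<\mu$ (using $\lambda\leq\mu$ and $\aleph_1<\mu$ with $\mu$ regular). Summing over $(\xi,\dot Q)\in A_n$ and using regularity of $\mu$ once more yields $|A_{n+1}|<\mu$. Finally, since $\mu$ has cofinality ${\geq}\aleph_2>\aleph_0$, the countable union $|\overline A|=\sup_n|A_n|$ remains ${<}\mu$, as desired.
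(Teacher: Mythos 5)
Your argument is correct, and it takes a genuinely different route from the paper's. The paper proves (b) by transfinite induction on $\xi\in[\pi_1,\pi]$: the base case handles the block-filling in $\pi_1$, the $\Sigma$-successor case is trivial, the $\Pi$-successor case handles the orbit requirement, and the limit case splits on $\cof(\xi)$. You instead assemble $\overline A$ from below via an $\omega$-indexed iteration of a single global closure operator, with no reference to the iteration length. The technical crux is the same in both: for $\xi\in\Pi$ and $\dot Q\in\Xi_\xi$, the orbit $\{\hat h_\xi^*(\dot Q):h\in\Hgroup^*\}$ has size $<\mu$ because $\hat h_\xi^*(\dot Q)$ depends on $h$ only through $h\restriction\bigcup_{\delta\in K_{\dot Q}}B_\delta$ with $|K_{\dot Q}|<\lambda$ --- which is exactly what \eqref{eq:unclear} encodes. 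The paper deploys this by forming the closed set $A^*$ and showing the subgroup $\Hgroup'=\{g\in\Hgroup^*:\supp(g)\subseteq A^*\}$ already generates the full orbit set $X$; you deploy it element-by-element, bounding the number of distinct restrictions via the finite-support direct-sum structure of $\Hgroup^*$. Both proofs implicitly use that $h\mapsto\hat h_\xi$ respects composition (only inversion is recorded as Lemma~\ref{automident}(c), but composition follows by the same induction), so this is a shared implicit assumption rather than a gap. Your packaging is arguably cleaner for this lemma in isolation; the paper's matches the per-$\xi$ induction scaffold used throughout the section. One small slip: you invoke regularity of $\mu$ to get $|K_{\dot Q}|\cdot\aleph_1<\mu$, but this is just $\max\{|K_{\dot Q}|,\aleph_1\}$ and both entries are $<\mu$, so regularity is not needed at that point (it is of course needed for the sum over $(\xi,\dot Q)\in A_n$ and for the final union over $n<\omega$).
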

\begin{proof}
   Property (a) is straightforward. We show by induction on $\xi\in[\pi_1,\pi]$ that (b) holds for any $A\subseteq\xi^+$.

   \textbf{Case $\bm{\xi=\pi_1}$:}
   $\overline A=\bigcup\{B_\delta:\, B_\delta\cap A\neq \emptyset\}$. So
   $|\overline A|=\aleph_1\times |A|<\mu$.

   \textbf{Case $\bm{\xi=\zeta+1}$ with $\bm{\zeta\in \Sigma}$:}
   If $A\subseteq \xi^+$ has size
   ${<}\mu$,
   then $\overline A\subseteq
   \overline{(A\cap \zeta^+)}\cup \{\zeta\}$ has size ${<}\mu$.

   \textbf{Case $\bm{\xi=\zeta+1}$ with $\bm{\zeta\in \Pi}$:}
   For $h\in\Hgroup^*$, set  $\supp(h)=\bigcup\{B_\delta:\, h{\restriction} B_\delta\neq \textrm{id}_{B_\delta}\}$.
   Let $A^*$ be the closure of
   \[
     \bigcup\{ H^*(\dot Q)\cap\pi_1:\, (\zeta,\dot Q)\in A\}.
   \]
   $H^*(\dot Q)$ has size ${<}\lambda \le \mu$ by Lemma~\ref{smallH}(c),
   and therefore also the set $A^*$ has size  ${<}\mu$.

   It is clear that
   $\overline A\subseteq
   \overline{(A\cap \zeta^+)}\cup \{\zeta\}
   \cup X$ for
   \[
     X:=\{(\zeta, \hat h^*(\dot Q)):\, (\zeta,\dot Q)\in A, h\in\Hgroup^*\}.
   \]
   Since $\Hgroup^*$ is a group, $\{\zeta\}\cup X$ is closed. We claim that we get the same set $X$ if we replace
   $\Hgroup^*$ with
   \[
     \Hgroup':=\{g\in \Hgroup^*:\, \supp(g)\subseteq A^*\}.
   \]
   As $\Hgroup'$ has size ${<}\mu$ (recall that $|\supp(g)|\leq\aleph_1$ for any $g\in\Hgroup^*$), we get
   $|\overline A|<\mu$, as required.

   Note that for $f\in \Hgroup^*$ and $(\zeta,\dot Q)\in A$,
   by~\eqref{eq:unclear}
   $\supp(f)\cap A^*=\emptyset$ implies $\hat f^*_\zeta(\dot Q)=\dot Q$.
   And for $h\in \Hgroup^*$, there is a
   $g\in \Hgroup'$ such that $f:= g^{-1}\circ h$
   satisfies $\supp(f)\cap A^*=\emptyset$. (Basically, $g{\restriction}A^*=h{\restriction} A^*$ and $g{\restriction}(\pi_1\menos A^*)$ is the identity.)
   So $\hat f^*(\dot Q)=\dot Q$, which implies
   \[
     \hat g^*(\dot Q)=\hat g^*(\hat f^*(\dot Q))=\hat h^*(\dot Q),
   \]
   as required.

   \textbf{Case $\bm{\xi}$ limit:}
   If $\cof(\xi)\ge \mu$, then
   $A\subseteq \zeta^+$ for some $\zeta<\xi$,
   so $|\overline A|<\mu$ by induction hypothesis.
   Otherwise, $\overline A=\bigcup_{\zeta\in I}\overline{A\cap \zeta^+}$ for some witness $I$ of $\cof(\xi)$, so again
   $|\overline A|<\mu$.
\end{proof}

\begin{lemma}\label{lem:allthefactors}
To satisfy assumption (\ref{item:blaprod}) of Lemma~\ref{lem:kjgeqwtr} for $\xi\in \Pi$,
the following is sufficient, while assuming (I) and (II) for $\zeta<\xi$:
\begin{enumerate}[(a)]
    \item[(\ref{item:blaprod}')]
For some formula $\varphi(x,y)$
using only parameters from the
ground model and some $\kappa_\xi\leq\lambda$,
$\Xi_\xi$ is the set of
all nice $\Por^*_\xi$-names
$\dot Q$ for ${<}\kappa_\xi$-sized
forcings consisting of reals such that
$\Vdash_{\Por^*_\xi} \varphi(\dot Q,\xi)$.
\end{enumerate}
\end{lemma}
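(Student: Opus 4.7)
The plan is to check, under the inductive hypothesis that (\ref{item:blaprod}) and (\ref{item:blasigma}) of Lemma~\ref{lem:kjgeqwtr} hold at every $\zeta<\xi$, that (\ref{item:blaprod}) also holds at $\xi$. Fix $\dot Q\in\Xi_\xi$ and $h\in\Hgroup^*$ for which $\hat h_\xi$ is defined; I will verify that $\tau:=\hat h^*_\xi(\dot Q)$ again satisfies the three defining clauses of (I$'$) and hence lies in $\Xi_\xi$.

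The inductive hypothesis lets us run the proof of Lemma~\ref{lem:kjgeqwtr} through all levels $\zeta<\xi$, so part~(b) of that lemma is available strictly below $\xi$; together with the closedness of $C_\xi$ (which is condition~(II) \emph{at} $\xi$ and is independent of $\Xi_\xi$), this gives that $\hat h_\xi$ is a p.o.-automorphism of $\Por^*_\xi=\Por^*{\restriction}\xi^+$ that restricts to an automorphism of $\Por^-_\xi=\Por^*{\restriction}C_\xi$. From this point the verification is the standard transport-under-automorphism argument. First, any p.o.-automorphism sends antichains to antichains and fixes standard names, so $\tau$ is again a nice $\Por^*_\xi$-name of the same syntactic shape as $\dot Q$. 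Second, the property ``is a poset of size ${<}\kappa_\xi$ whose universe consists of reals'' is a $\Por^*_\xi$-forced statement whose only parameters ($\kappa_\xi$, $\omega$, the codes for the ordering) are standard names, hence transports from $\dot Q$ to $\tau$. Third, since $\varphi$ has only ground-model parameters and $\check\xi$ is a standard name fixed by $\hat h^*_\xi$, the hypothesis $\Vdash_{\Por^-_\xi}\varphi(\dot Q,\xi)$ transports across the restricted automorphism $\hat h_\xi{\restriction}\Por^-_\xi$ to $\Vdash_{\Por^-_\xi}\varphi(\tau,\xi)$. Thus $\tau\in\Xi_\xi$, which is exactly (\ref{item:blaprod}) at $\xi$.

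The only point that might look worrying is the apparent circular interplay between this lemma and Lemma~\ref{lem:kjgeqwtr}: we invoke Lemma~\ref{lem:kjgeqwtr}(b) to know that $\hat h_\xi$ is a genuine automorphism that respects $\Por^-_\xi$, but Lemma~\ref{lem:kjgeqwtr} is itself proved by induction using (\ref{item:blaprod}). No circularity actually arises, because the proof of Lemma~\ref{lem:kjgeqwtr}(a),(b) at $\xi$ uses (\ref{item:blaprod}) only \emph{strictly below} $\xi$ (it needs $\hat h^*_\zeta[\Xi_\zeta]=\Xi_\zeta$ for $\zeta<\xi$), and closure of $C_\xi$ at $\xi$ is an independent hypothesis. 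So one simply interleaves the two inductions level by level. The main (modest) obstacle is exactly this bookkeeping—making sure that, at each stage, one appeals only to items already established at strictly earlier stages.
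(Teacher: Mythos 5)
Your proof is correct and follows essentially the same route as the paper's: transport $\dot Q$ across the automorphism $\hat h_\xi$ of $\Por^*_\xi$, noting that niceness, the size/reals property, and the forced sentence $\varphi(\cdot,\xi)$ all carry over because $\hat h_\xi$ preserves antichains and fixes standard names. You are more explicit than the paper about two points that the paper glosses: (i) that $\hat h_\xi$ restricts to an automorphism of $\Por^-_\xi$ (via Lemma~\ref{lem:kjgeqwtr}(b) below $\xi$ plus closedness of $C_\xi$), and (ii) the interleaving of the two inductions so that no circularity arises; both observations are accurate and worth making, and the paper's very brief proof instead silently passes to $\Vdash_{\Por^*_\xi}$.
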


\begin{proof}
   By the assumption and Lemma~\ref{lem:kjgeqwtr}, $\Por_\xi$ is symmetric and
   $h^*_\xi[\Por_\xi^*]=\Por_\xi^*$ for any $h\in\Hgroup^*$ (because $\Por_\xi^*=\Por^*{\upharpoonright}\xi^+$).

   Let $\dot Q$ be such a nice
   $\Por^*_\xi$-name.
   Then $\hat h^*_\xi(\dot Q)$ is also a
   nice $\Por^*_\xi$-name, and
   $\Vdash_{\Por^*_\xi} \varphi(\hat h^*_\xi(\dot Q),\xi)$ as $\varphi$ only uses ground model parameters (i.e., standard names).
\end{proof}

 As mentioned, we need closed
 $C\subseteq \pi^+$ that
 define complete subforcings.
 For this we use the following result:

\begin{lemma}\label{smallrestr}
    Let $\Por$ be as in Lemma~\ref{lem:kjgeqwtr},
    and let $\mu>\lambda$ be regular and $\aleph_1$-inaccessible.

    \begin{enumerate}[(a)]
        \item
        For $A\subseteq\pi^+$ of size ${<}\mu$
        there is some closed $C\supseteq A$
        of size ${<}\mu$, such that $\Por^*{\upharpoonright}C\lessdot\Por^*$.

        \item
        The
        closed sets $C\in [\pi^+]^{{<}\mu}$
        that satisfy $\Por^*{\upharpoonright}C\lessdot\Por^*$ form a $\lambda$-club.
    \end{enumerate}
\end{lemma}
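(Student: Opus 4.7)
For (a), I would build $C$ by a length-$\omega_1$ iteration. Start with $C_0=\overline{A}$ from Lemma~\ref{lem:closure}(b), of size ${<}\mu$. Given $C_\alpha$ closed with $|C_\alpha|<\mu$, Lemma~\ref{item:restr} gives $|\Por^*{\upharpoonright}C_\alpha|<\mu$, so by the ccc each maximal antichain there is countable and, by the $\aleph_1$-inaccessibility of $\mu$, the collection of such antichains, together with the set of compatible pairs from $\Por^*{\upharpoonright}C_\alpha$, has cardinality ${<}\mu$. For each such antichain and each compatible pair, choose a witnessing piece of data in $\Por^*$ (a common extension, in the latter case); let $C_{\alpha+1}$ be the closure of $C_\alpha$ together with the refined histories of all chosen witnesses. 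Each added history has size ${<}\lambda$ by Lemma~\ref{smallH}(a), so Lemma~\ref{lem:closure}(b) yields $|C_{\alpha+1}|<\mu$. Take unions at limits. The result $C=\bigcup_{\alpha<\omega_1}C_\alpha$ is closed by Lemma~\ref{lem:closure}(a), and has size ${<}\mu$ (using that $\mu\ge\aleph_2$ is regular).

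The main work is verifying $\Por^*{\upharpoonright}C\lessdot\Por^*$. Compatibility absoluteness is automatic from the construction: if $p,q\in\Por^*{\upharpoonright}C$ are compatible in $\Por^*$, both lie in some $\Por^*{\upharpoonright}C_\alpha$, and the common extension chosen at stage $\alpha+1$ sits in $\Por^*{\upharpoonright}C$. For maximality of antichains, given $r\in\Por^*$ one must exhibit a reduction $r'\in\Por^*{\upharpoonright}C$ (every $\Por^*{\upharpoonright}C$-extension of which is compatible with $r$ in $\Por^*$); then maximality of a given $M$ in $\Por^*{\upharpoonright}C$ forces $r'$, hence $r$, to be compatible with some $m\in M$. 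The reduction is built by induction along the iteration using the tidy structure of Definition~\ref{DefSstidy}: at $\zeta\in\Sigma\cap C$ one projects the nice $\Por^-_\zeta$-name $r(\zeta)$ using an inductive instance of the lemma giving $\Por^*{\upharpoonright}(C_\zeta\cap C)\lessdot\Por^-_\zeta$; at $\zeta\in\Pi\cap C$ one restricts the self-indexed product $r(\zeta)$ to those factor-indices whose nice names have history lying in $C$ (using the closedness of $\Xi_\zeta$ from Lemma~\ref{lem:kjgeqwtr}). The main obstacle is this projection construction, which forces us to prove the whole statement by simultaneous induction on $\pi$ rather than apply (a) as a black box.

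For (b), cofinality in $[\pi^+]^{<\mu}$ is immediate from (a). For closure under $\lambda$-chains, let $\langle C^{(\beta)}:\beta<\lambda\rangle$ be an increasing chain in the family; set $C^*=\bigcup_\beta C^{(\beta)}$, which is closed by Lemma~\ref{lem:closure}(a) and of size ${<}\mu$ (since $\lambda<\mu$ is regular and each $|C^{(\beta)}|<\mu$). Any $p\in\Por^*{\upharpoonright}C^*$ satisfies $|H^*(p)|<\lambda$ by Lemma~\ref{smallH}(a), so by regularity of $\lambda$ its history sits in some single $C^{(\beta)}$, giving $\Por^*{\upharpoonright}C^*=\bigcup_\beta\Por^*{\upharpoonright}C^{(\beta)}$. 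An increasing union of $\lessdot$-subforcings of a ccc forcing is again a $\lessdot$-subforcing (any countable maximal antichain lies in one member of the chain, where it is already maximal in $\Por^*$), concluding~(b).
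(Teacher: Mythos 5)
Your part (b) is essentially the same as the paper's argument and is correct. But part (a) has a genuine gap, and the reduction machinery you propose is unnecessary.

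The gap is the iteration length. You close off over $\omega_1$ many steps and then assert that any compatible pair $p,q\in\Por^*{\restriction}C$ lies in a single $\Por^*{\restriction}C_\alpha$. This fails when $\lambda>\aleph_1$: a single condition $p\in\Por^*$ has $|H^*(p)|<\lambda$, which can be $\ge\aleph_1$, so $H^*(p)\subseteq \bigcup_{\alpha<\omega_1}C_\alpha$ does not imply $H^*(p)\subseteq C_\alpha$ for any $\alpha<\omega_1$ (a set of size $\aleph_1$ can be spread cofinally along an $\omega_1$-chain). The same problem sinks the plan of catching countable maximal antichains at a bounded stage. The paper's fix is to iterate $\lambda$ many steps: then for a countable antichain the union of the histories has size ${<}\lambda$, and regularity of $\lambda$ lets you capture it inside a single $A_\alpha$. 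Concretely, the paper fixes a function $f:(\Por^*)^2\to[\pi^+]^{<\lambda}$ (assigning to each compatible pair a $<\lambda$-set supporting a common extension) and $g:(\Por^*)^{\le\omega}\to[\pi^+]^{<\lambda}$ (assigning to each non-maximal countable antichain a $<\lambda$-set supporting a witness to non-maximality), and takes $A_{\alpha+1}:=\overline{A_\alpha\cup\bigcup f(p,q)\cup\bigcup g(\bar p)}$ for $\alpha<\lambda$.

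Once the iteration length is corrected, the reduction construction you sketch (building $r'\in\Por^*{\restriction}C$ level by level along $\pi$ using the tidy structure and an inner induction) is entirely unnecessary, and it is also the weakest part of your sketch: the inductive step at $\zeta\in\Sigma$ requires $\Por^*{\restriction}(C_\zeta\cap C)\lessdot\Por^-_\zeta$, and it is not at all clear that $C_\zeta\cap C$ is a member of the relevant club, so this would have to be arranged by additional book-keeping you have not described. The paper avoids all of this: after closing off under $g$-witnesses over $\lambda$ stages, a maximal antichain $M$ of $\Por^*{\restriction}C$ is countable, hence lies in $\Por^*{\restriction}A_\alpha$ for some $\alpha<\lambda$, and if $M$ were not maximal in $\Por^*$ then the witness placed in $A_{\alpha+1}$ would contradict maximality of $M$ in $\Por^*{\restriction}C$. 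This directly verifies the maximal-antichain criterion for $\lessdot$ without ever constructing reductions.
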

The proof is just a standard Skolem-L\"owenheim type 
closure argument:
\begin{proof}

   (a) Using Corollary~\ref{cor:small}(\ref{item:quaxb})
   we can fix a function $f:(\Por^*_\pi)^2\to [\pi^+]^{<\lambda}$ such that
   if $p$ and $q$ are compatible
   then there is some $r\leq p,q$
   in $\Por^*{\restriction} f(p,q)$.
   Also, we can fix a function $g:(\Por^*_\pi)^{{\leq}\omega}\to[\pi^+]^{<\lambda}$ such that, whenever $\bar{p}=\la p_n:n<w\ra$ ($w\leq\omega$) is a non-empty antichain but \textbf{not}  maximal in $\Por_\pi$,
   then
   there is some
   $q\in \Por^*{\restriction} g(\bar{p})$
   with $q\perp p_n$ for any $n<w$. 
   By recursion on $\alpha<\lambda$, define
   \[A'_\alpha:=A\cup A_{{<}\alpha}\cup\bigcup_{p,q\in\Por^*{\upharpoonright}A_{{<}\alpha}}f(p,q)\cup\bigcup_{\bar{p}\in(\Por^*{\upharpoonright}A_{{<}\alpha})^{{\leq}\omega}} g(\bar{p})\]
   where $A_{{<}\alpha}:=\bigcup_{\xi<\alpha}A_\xi$; and let $A_\alpha:=\overline{A'_\alpha}$ be the
   closure (see Lemma~\ref{lem:closure}).
   So $|A_\alpha|<\mu$, and we can set $C:=\bigcup_{\alpha<\lambda}A_\alpha$, which is as desired because
   any countable sequence in $\Por^*{\upharpoonright}C$ is a countable sequence in $\Por^*{\upharpoonright}A_\alpha$ for some $\alpha<\lambda$.

   (b) Let $\la B_i : i\in\lambda\ra$ be an
   increasing sequence of closed subsets
   of $\pi^+$ such that $\Por^*{\upharpoonright}B_i$ is a complete
   subforcing of $\Por^*$.
   Set $B:=\bigcup_{i\in\lambda}B_i$.
   According to~\ref{lem:closure}(\ref{item:sjlgclosed})  $B$ is closed.
   Assume that $A\subseteq \Por^*{\upharpoonright}B$ is a maximal antichain. Any $p,q\in A$ are incompatible
   in $\Por^*{\upharpoonright}B_i$ for some
   $i$, and therefore in $\Por^*$.
   Due to ccc, $A$ is countable,
   and by Corollary~\ref{cor:small}(\ref{item:quaxb})
   there is an
   $i<\lambda$ such that $A\subseteq \Por^*{\upharpoonright}B_i$. Therefore $A$ is maximal in $\Por^*$.
\end{proof}


\begin{corollary}\label{smallrestr2}
   With the same hypothesis of Lemma~\ref{smallrestr}, if $P\subseteq\Por^*_\pi$ has size ${<}\mu$, then there is some $\Por^-\lessdot\Por^*_\pi$ of size~${<}\mu$ such that $P\subseteq\Por^-$.
\end{corollary}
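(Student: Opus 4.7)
The plan is to reduce this to Lemma~\ref{smallrestr}(a) by taking $A$ to be the union of the (refined) histories of the given conditions. First I would set
\[
A := \bigcup_{p\in P} H^*(p) \subseteq \pi^+.
\]
By Corollary~\ref{cor:small}(\ref{item:quaxb}) (equivalently, Lemma~\ref{smallH}(a)), each $H^*(p)$ has size ${<}\lambda$, and since $|P|<\mu$, $\lambda\leq\mu$, and $\mu$ is regular, we get $|A|<\mu$.

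Next, apply Lemma~\ref{smallrestr}(a) to this $A$: there is a closed $C\subseteq \pi^+$ with $A\subseteq C$, $|C|<\mu$, and $\Por^*{\upharpoonright}C\lessdot \Por^*$. Define $\Por^-:=\Por^*{\upharpoonright}C$. By the choice of $A$, every $p\in P$ satisfies $H^*(p)\subseteq A\subseteq C$, so $p\in \Por^-$; hence $P\subseteq \Por^-$. Moreover, $|\Por^-|<\mu$ by Lemma~\ref{item:restr} (applied with $X=C$).

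I do not anticipate any serious obstacles, as all ingredients are already packaged in the preceding results: the existence of small closed hulls giving complete subforcings is Lemma~\ref{smallrestr}(a), the cardinality bound for $\Por^*{\upharpoonright}C$ is Lemma~\ref{item:restr}, and the smallness of $H^*(p)$ is Lemma~\ref{smallH}(a). The only point to verify is the size of $A$, which follows from the regularity of $\mu$ and $\lambda\leq\mu$.
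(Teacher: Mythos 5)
Your proof is correct and follows exactly the paper's route: the paper's one-line proof is precisely to apply Lemma~\ref{smallrestr} to $A:=\bigcup_{p\in P}H^*(p)$, and you have simply spelled out the cardinality bookkeeping (size of $A$ via Lemma~\ref{smallH}(a), size of $\Por^*{\upharpoonright}C$ via Lemma~\ref{item:restr}) that the paper leaves implicit. One minor note: you don't actually need regularity of $\mu$ to conclude $|A|<\mu$, since $|A|\le\max(|P|,\lambda)<\mu$ already by infinite cardinal arithmetic.
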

\begin{proof}
   Apply Lemma~\ref{smallrestr} to $A:=\bigcup_{p\in P}H^*(p)$.
\end{proof}

We now summarize what we already know about the
construction that we are going to perform in
the next section:
\begin{corollary}\label{cor:summary}
   Let $\lambda$ be regular uncountable and assume
   $\lambda\le |\pi|$,
   $|\pi|^{<\lambda}=|\pi|$, and that
   $\pi\smallsetminus \pi_1$ is partitioned into $\Sigma$ and $\Pi$.
   We inductively construct a (tidy) S$\lambda$s iteration
   $\Por$ as follows:
   \begin{itemize}
       \item[$(\Sigma)$] 
       As step $\zeta\in \Sigma$,
       we pick
       a (definition of a) Suslin-ccc forcing
       $\Sor_\zeta$, some $\aleph_1$-inaccessible $\kappa_\zeta>\lambda$,
       and some $C_\zeta$
       in the $\lambda$-club set $[\zeta^+]^{<\kappa_\zeta}$ of Lemma~\ref{smallrestr}, and set $\Qor_\zeta=\Sor^{V^{\Por^*{\restriction} C_\zeta}}$. (So $\Por^-_\zeta=\Por^*{\upharpoonright}C_\zeta$.)

       \item[$(\Pi)$] 
       Fix a formula $\varphi(x,y)$ with parameters in
       the ground model. At step $\zeta\in\Pi$,
       pick some regular uncountable $\kappa_\zeta\le\lambda$
       and let
       $\Qnm_\zeta$ be (a suitable name for)
       the FS product of all Knaster
       real-number-posets of size ${<}\kappa_\zeta$
       satisfying $\varphi(x,\zeta)$.
   \end{itemize}
   Then (inductively)
   $\Por_\xi$ is a well defined ccc forcing for $\pi_1\leq\xi\leq\pi$, and
   \begin{enumerate}[(a)]
       \item $\LCU_{\textnormal{sp}}(\Por_\xi,\mu)$ holds for any regular $\lambda\le\mu\le\pi_0$.
       \item $\Por_\xi$ forces that the continuum
       has size below $|\pi|$.
   \end{enumerate}
\end{corollary}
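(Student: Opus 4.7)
The plan is to verify by induction on $\xi \in [\pi_1, \pi]$ that the construction yields a symmetric tidy S$\lambda$s iteration, so that Theorem~\ref{PresSplit} and Corollary~\ref{cor:small} apply and items (a), (b) are then immediate.

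For the inductive step at $\zeta \in \Sigma$, the inductive hypothesis gives that $\Por_\zeta$ is ccc with dense subset $\Por^*_\zeta$, and $C_\zeta$ is chosen from the $\lambda$-club of Lemma~\ref{smallrestr}(b) (so in particular $\kappa_\zeta$ is assumed regular and $\aleph_1$-inaccessible above $\lambda$). Then $\Por^-_\zeta = \Por^*{\restriction}C_\zeta$ is a complete subforcing with $C_\zeta$ closed in the sense of Definition~\ref{def:closed}, and the evaluation $\Qnm_\zeta = \Sor_\zeta^{V^{\Por^-_\zeta}}$ of a Suslin ccc definition is absolutely ccc. For $\zeta \in \Pi$, the set $\Xi_\zeta$ consists of all nice $\Por^*_\zeta$-names for Knaster real-number-posets of size ${<}\kappa_\zeta$ forced to satisfy $\varphi(x, \zeta)$, and thus is closed by Lemma~\ref{lem:allthefactors}; the self-indexed finite support product of Knaster posets is Knaster, hence ccc. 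Limits require no additional verification. The hypotheses of Lemma~\ref{lem:kjgeqwtr} are thereby met at every stage, $\Por$ is symmetric, and Corollary~\ref{cor:small}(\ref{item:quaxa}) supplies $\lambda$-small history.

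Item (a) now follows directly from Theorem~\ref{PresSplit} applied to $\Por$, which yields $\LCU_{\Rbsp}(\Por_\pi, \mu)$ for every regular $\mu$ with $\lambda \le \mu \le \pi_0$, witnessed by $\{\dot{\eta}_{\omega_1\delta} : \delta < \mu\}$; the same witnesses work in every intermediate $\Por_\xi$ with $\pi_1 \le \xi \le \pi$, since they are already $\Por_{\pi_1}$-names. For (b), I verify the hypotheses of Corollary~\ref{cor:small}(\ref{item:quaxd}): the assumption $|\pi|^{<\lambda} = |\pi|$ gives $|\pi|^{\aleph_0} = |\pi|$, and $\lambda \le |\pi|^+$ is immediate from $\lambda \le |\pi|$. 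The remaining hypothesis $|\Xi_\zeta| \le |\pi|$ at $\Pi$-steps is secured by carrying the parallel invariant $|\Por^*_\zeta| \le |\pi|$ through the induction and counting nice names: there are at most $|\Por^*_\zeta|^{<\lambda} \le |\pi|^{<\lambda} = |\pi|$ nice $\Por^*_\zeta$-names for posets of size ${<}\kappa_\zeta$. Corollary~\ref{cor:small}(\ref{item:quaxd}) then gives $|\Por^*| \le |\pi|$, and ccc implies $\Por_\xi$ adds at most $|\pi|^{\aleph_0} = |\pi|$ reals; the matching lower bound follows from the distinct Cohen reals $\dot\eta_a$ for $a \in \pi_1$ together with the generics added at $\Sigma$-steps, giving $\Vdash_{\Por_\xi} \cfrak = |\pi|$.

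The only real obstacle is the bookkeeping, i.e., threading the inductive invariants (symmetry, closedness of $C_\zeta$ and $\Xi_\zeta$, and the cardinality bound $|\Por^*_\zeta| \le |\pi|$) simultaneously through the construction; once this is in place, the substantive content has already been absorbed by Lemma~\ref{lem:kjgeqwtr}, Corollary~\ref{cor:small}, and Theorem~\ref{PresSplit}.
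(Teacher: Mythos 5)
Your proposal is correct and follows essentially the same route as the paper: verify the closedness hypotheses of Lemma~\ref{lem:kjgeqwtr} at each step (via Lemma~\ref{lem:allthefactors} for $\Pi$-steps and Lemma~\ref{smallrestr}(b) for $\Sigma$-steps), apply Theorem~\ref{PresSplit} for (a), and use Corollary~\ref{cor:small}(\ref{item:quaxd}) together with the inductive bound $|\Xi_\zeta|\le|\Por^*_\zeta|^{<\lambda}\le|\pi|$ for (b). Your handling of the $\xi<\pi$ case for (a) via Borel absoluteness of the splitting relation is a small variant of the implicit route (applying Theorem~\ref{PresSplit} to $\Por_\xi$, itself a symmetric S$\lambda$s iteration), and your note that $\kappa_\zeta$ must implicitly be regular and $\aleph_1$-inaccessible for Lemma~\ref{smallrestr}(b) to apply is a useful clarification.
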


\begin{proof}
    Each $\Qnm_\zeta$ is forced to be ccc (by either absoluteness or
   the Knaster assumption),
   so we get a valid iteration
   (and we assume that we choose the names for the iterands such that we get a tidy S$\lambda$s-iteration).

   $\LCU$ follows from Lemmas~\ref{lem:allthefactors} and \ref{lem:kjgeqwtr}(\ref{item:symmetric}), and
   Theorem~\ref{PresSplit}.

   For the size of the continuum, we use Corollary~\ref{cor:small}(\ref{item:quaxd}) to show by
   induction that $|\Por^*_\xi|\le|\pi|$:
   Assume this already
   is the case for
   $\zeta\in \Pi$,
   then $\Xi_\zeta$ consists of nice $\Por^*_\zeta$-names for ${<}\lambda$-sized real-number-forcings,
   and there are only
   $|\Por^*_\zeta|^{<\lambda}\le|\pi|$ many such nice names.
\end{proof}

\section{The forcing construction for the left hand side}\label{sec:left}

In this section, we prove the first step of the main theorem:\
Theorem~\ref{thm:step3}, which gives the independence results for the left hand side. After the work we have done in the previous sections, this is basically
a simple variant of the construction
in~\cite{GKS}.

\subsection{Preliminaries.}

\newcommand{\Aor}{\mathbb{A}}
\newcommand{\Bor}{\mathbb{B}}
\newcommand{\Dor}{\mathbb{D}}
\newcommand{\Eor}{\mathbb{E}}
\newcommand{\Lc}{\mathbbm{Lc}}


\begin{notation}\label{not:RLCU}
Denote
\[(\bfrak_i,\dfrak_i):=\left\{\begin{array}{ll}
(\addN,\cofN) & \text{if $i=1$,}\\
(\covN,\nonN) & \text{if $i=2$,}\\
(\bfrak,\dfrak) & \text{if $i=3$,}\\
(\nonM,\covM) & \text{if $i=4$,}\\
(\sfrak,\rfrak) & \text{if $i=\mathrm{sp}$.}
\end{array}\right.\]

As in~\cite{GKS,GKMS2}, for $i\in\{1,2,3,4,{\mathrm{sp}}\}$ consider Blass-uniform relational systems $\Rbf^{\LCU}_i$ and $\Rbf^{\COB}_i$ such that, following in Example~\ref{exp:blassunif}, $\Rbf^\LCU_{\mathrm{sp}}=\Rbf^\COB_{\mathrm{sp}}=\Rbsp$ and ZFC proves\footnote{In more detail, $\Rbf^{\LCU}_i=\Rbf^{\COB}_i$ except when $i=2$. If we follow~\cite{diegoetal} we can also consider $\Rbf^{\LCU}_2=\Rbf^{\COB}_2$.}
\[\bfrak(\Rbf^{\COB}_i)\leq\bfrak_i\leq\bfrak(\Rbf^{\LCU}_i)\text{\ and }\dfrak(\Rbf^{\LCU}_i)\leq\dfrak_i\leq\dfrak(\Rbf^{\COB}_i).\]
We abbreviate $\COB_{\Rbf^\COB_i}$ by $\COB_i$, and $\LCU_{\Rbf^\LCU_i}$ by $\LCU_i$.
\end{notation}

For completeness, we review the posets we use in our construction.

\begin{definition}
    Define the following forcing notions (where the forcing in item ($i$) is designed to increase $\bfrak_i$):
    \begin{enumerate}[(1)]

        \item \emph{Amoeba forcing $\Aor$} is the poset whose conditions are subtrees $T\subseteq 2^{<\omega}$ without maximal nodes such that $[T]$, the set of branches of $T$, has measure ${<}\frac{1}{2}$ (with respect to the Lebesgue measure of $2^\omega$). The order is $\supseteq$.

        \item \emph{Random forcing $\Bor$} is the poset whose conditions are subtrees $T\subseteq 2^{<\omega}$ without maximal nodes such that $[T]$ has positive measure. The order is $\subseteq$.

        \item \emph{Hechler forcing} is $\Dor:=\omega^{<\omega}\times\omega^\omega$ ordered by $(t,y)\leq(s,x)$ iff $s\subseteq t$, $x\leq y$ (pointwise) and $t(i)\geq x(i)$ for all $i\in|t|\smallsetminus|s|$.

        \item \emph{Eventually different forcing} is
        \[\Eor:=\omega^{<\omega}\times\bigcup_{n<\omega}\big([\omega]^{\leq n}\big)^\omega\]
        ordered by $(t,\psi)\leq(s,\varphi)$ iff $s\subseteq t$, $\forall i<\omega(\varphi(i)\subseteq\psi(i))$ and $t(i)\notin\varphi(i)$ for all $i\in|t|\smallsetminus |s|$.

        \item[(sp)] Let $F$ be a base of a (free) filter on $\omega$. \emph{Mathias-Prikry forcing on $F$} is
        $\Mor_F:=\{(s,x)\in[\omega]^{<\aleph_0}\times F: \max(s)<\min(x)\}$ (here $\max(\emptyset):=-1$) ordered by $(t,y)\leq(s,x)$ if $s\subseteq t$, $y\subseteq x$ and $t\smallsetminus s\subseteq x$.
    \end{enumerate}
\end{definition}

For each of the posets above it is easy to construct a 1-1 function from the poset into $\omega^\omega$. So, until the end of this section, the posets above are seen as subsets of $\omega^\omega$. Moreover, the posets (1)--(4) are Suslin ccc, and they are  homeomorphic to a Borel subset of $\omega^\omega$ (and the order is Borel as well).

In the proof of Theorem~\ref{mainstepI} we deal with special restrictions of the posets (1)--(4) under sets of reals of the following form.

\begin{definition}\label{def:elementary}
   Let $\lambda\geq\aleph_1$ be a cardinal. Say that $E\subseteq \omega^\omega$ is \emph{$\lambda$-elementary} if $E=\omega^\omega\cap N$ for some regular $\chi\geq(2^{\aleph_0})^+$ and some $N\preceq \mathcal H_\chi$ of size ${<}\lambda$, where $\mathcal H_\chi$ denotes the collection of hereditarily ${<}\lambda$-size sets.
\end{definition}

We look at posets of the form $\Sor\cap E$ where $\Sor$ is a poset as in (1)--(4) and $E\subseteq\omega^\omega$ is $\lambda$-elementary. Note that, whenever $\chi\geq(2^{\aleph_0})^+$, $N\preceq \mathcal H_\chi$ and $E=\omega^\omega\cap N$, we have $\Sor\cap E=\Sor^N$. Therefore:

\begin{fact}\label{fc:Suslinrestr}
   Let $E\subseteq\omega^\omega$ be elementary. Then:
   \begin{enumerate}[(1)]

       \item The poset $\Aor\cap E$ adds a (code of a) Borel measure zero set that contains all Borel null sets with Borel code in $E$.

       \item The generic real added by $\Bor\cap E$ evades all Borel null sets with Borel code in $E$.
       \item The generic real added by $\Dor\cap E$ dominates all the functions in $E$.

       \item The generic real added by $\Eor\cap E$ is eventually different from all the functions in $E$.
   \end{enumerate}
\end{fact}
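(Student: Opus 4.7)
The overall plan is uniform: each of $\Aor$, $\Bor$, $\Dor$, $\Eor$ is defined by a parameter-free Borel formula (its universe is a Borel subset of $\omega^\omega$ and its order relation is Borel), so if $N \preceq \mathcal H_\chi$ is the elementary submodel witnessing $E = \omega^\omega \cap N$, then $\Sor \cap E = \Sor^N$ for each of the four posets $\Sor$, by absoluteness of Borel definitions between $\mathcal H_\chi$ and $N$. Each item then reduces to exhibiting, for each ``test object'' in $E$, a dense subset of $\Sor$ defined using only that object as parameter; by elementarity of $N$ the corresponding dense set lies in (hence is dense in) $\Sor \cap E$, forcing the required genericity property on the $(\Sor \cap E)$-generic.

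For (3), given $f \in E$ and $n < \omega$, the set $\{(s,x) \in \Dor : |s|\geq n \wedge \forall i < |s|\,(s(i) \geq f(i))\}$ is dense in $\Dor$ via the standard extension argument: replace $x$ by the pointwise maximum of $x$ and $f$, and extend $s$ to length $\geq n$ with sufficiently large values. For (4), given $f \in E$ and $n < \omega$, the analogous set $\{(s,\psi) \in \Eor : |s|\geq n \wedge \forall i < |s|\,(s(i)\neq f(i))\}$ is dense: extend $s$ coordinatewise by a value outside the finite set $\psi(i)\cup\{f(i)\}$. For (2), given a Borel code $c \in E$ for a null set $B_c \subseteq 2^\omega$, the set $\{T \in \Bor : [T]\cap B_c = \emptyset\}$ is dense by the usual argument that any positive-measure tree contains a positive-measure subtree disjoint from any given null Borel set. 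In each case the dense set is definable using only the distinguished parameter in $E$ (and the absolute Borel definition of $\Sor$), so elementarity transfers the density statement to $\Sor \cap E = \Sor^N$.

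For (1), the argument is similar: given a Borel code $c \in E$ for a null $B_c$ and a condition $T \in \Aor$, one constructs $T' \supseteq T$ in $\Aor$ with $[T'] \supseteq B_c$, using $\mu([T] \cup B_c) = \mu([T]) < 1/2$ to select a closed superset of $[T] \cup B_c$ of measure below $1/2$ and taking its tree. Elementarity yields such $T' \in \Aor \cap E$, and the standard decoding of the amoeba generic then produces a Borel measure-zero set containing every $B_c$ with $c \in E$. Across all four cases, the only point to verify is that each dense set is definable from parameters in $E$, which is immediate from the explicit Borel definitions of $\Sor$ and of the dense sets; so there is no substantive obstacle beyond this routine check of elementarity.
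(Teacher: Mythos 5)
Your general strategy matches the paper's implicit one: use $\Sor\cap E=\Sor^N$, then transfer density statements for each test object through the elementary submodel $N$. Item (2) is handled correctly. For (3) and (4), however, the dense sets you wrote are not actually dense: you require $s(i)\geq f(i)$ (resp.\ $s(i)\neq f(i)$) for \emph{all} $i<|s|$, but the stem $s_0$ of an arbitrary condition is already fixed and may violate this at some $i<|s_0|$, leaving no extension inside your set. The extensions you describe in words are the right ones; they lead to the standard dense sets $\{(s,x)\in\Dor:\forall i\,(x(i)\geq f(i))\}$ and $\{(s,\psi)\in\Eor:\forall i\geq |s|\,(f(i)\in\psi(i))\}$, which only constrain what happens past the stem. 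This is a write-up slip, not a conceptual gap.

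Item (1), on the other hand, has a genuine gap. You claim one can choose a closed superset of $[T]\cup B_c$ of measure below $1/2$; but a Borel null set $B_c$ with code in $E$ need not admit any closed superset of measure less than $1$: if $B_c$ is topologically dense (e.g.\ the set of eventually-zero sequences, whose code lies in every $E$), then every closed set containing it is all of $2^\omega$. So $\{T\in\Aor:[T]\supseteq B_c\}$ is not dense and your $T'$ does not exist. Moreover, even for closed $B_c$, the set you would read off the generic along your route, $\bigcup_{T\in G}[T]$, has measure $1/2$, not $0$, so it is not the promised null set. The standard argument that amoeba adds a master Borel null set is more involved: via the amalgamation property of the measure algebra one splits $\Aor$ (up to forcing equivalence) into a product of $\omega$ amoebas with parameters $2^{-n}$, each adding an open $U_n$ of measure $<2^{-n}$ containing every null $G_\delta$ set coded in $E$, and then $\bigcap_n U_n$ is the required null $G_\delta$; since the splitting is definable in $\mathcal H_\chi$, it also applies to $\Aor\cap E=\Aor^N$. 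Alternatively one can simply cite the classical fact that an amoeba generic over a transitive model $M$ produces a Borel null set covering every Borel null set coded in $M$, applied to the transitive collapse of $N$. Without one of these, your argument for (1) does not go through.
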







We now show how to modify the forcing construction in~\cite[\S4 \& \S5]{GKMS1}
to include $\LCU_{\mathrm{sp}}$ and $\COB_{\mathrm{sp}}$, by performing a construction according to the previous section, in particular to
Corollary~\ref{cor:summary}.
We will assume the following:
%

\begin{assumption}\label{asm:bla}
    $k_0\in[2,\omega]$; $\lambda_\mfrak\leq\lambda_1\leq\lambda_2\leq\lambda_3<\lambda_4$ are uncountable regular cardinals, $\lambda_5\geq\lambda_4$ is a cardinal,
    $\lambda_3=\chi^+$,
    $\lambda_{\mfrak}\leq\lambda_{\mathrm{sp}}\leq\lambda_3$ regular, such that  $\chi^{<{\chi}}=\chi\geq\aleph_1$, $\lambda_5^{<\lambda_4}=\lambda_5$, and $\lambda_i$ is $\aleph_1$-inaccessible whenever $\lambda_i>\lambda_{\mathrm{sp}}$ and $1\leq i\leq 4$.
\end{assumption}

Our intention is to show the following:
\begin{goal}\label{mainstepI}
There is a ccc poset $\Por$ of size $\lambda_5$ such that, for any $i\in\{1,2,3,4,{\mathrm{sp}}\}$,
    \begin{enumerate}[(a)]
        \item $\LCU_i(\Por,\theta)$ holds for any regular $\lambda_i\leq\theta\leq\lambda_5$.
        \item There is some directed $S_i$ with $\cp(S_i)=\lambda_i$ and $|S_i|=\lambda_5$ such that $\COB_i(\Por,S_i)$ holds.
        \item $\Por$ forces $\pfrak=\sfrak=\lambda_{\mathrm{sp}}$ and $\cfrak=\lambda_5$.
        \item $\Por$ forces $\mfrak_k=\aleph_1$ for any $k\in[1,k_0)$, and $\mfrak_k=\lambda_\mfrak$ for any $k\in[k_0,\omega]$.\footnote{Note that $\lambda_\mfrak=\aleph_1$ is allowed.}
    \end{enumerate}
\end{goal}

The way to achieve this is parallel to
\cite[\S1]{GKS}:
As first step we give the ``basic construction'' in Lemma~\ref{lem:step1},
using ``simple bookkeeping'' (which is described
by parameters $\bar C=\la C_\alpha\ra_{\alpha\in\Sigma}$ in
the ground model). This gives
us everything apart from  $\LCU_3$
(i.e., we do not claim that $\bfrak$ remains small).
This first step contains the only new aspect of the construction:
As we use a variant of the construction according to
the previous section, we get
 $\LCU_\mathrm{sp}$.

The next steps are just as in
\cite[\S1.3 \& \S1.4]{GKS}.
In Lemma~\ref{lem:step2} we remark:
Assuming  $2^\chi\geq\lambda_5$ (in addition to
Assumption~\ref{asm:bla}),
we can choose the bookkeeping parameters
$\bar C$ in such a way that the resulting forcing
satisfies $\LCU_3$ and thus all of
Goal~\ref{mainstepI}.

And finally we show
Theorem~\ref{thm:step3}:
without the assumption $2^\chi\geq\lambda_5$ (while assuming~\ref{asm:bla})
we can also get all of
Goal~\ref{mainstepI}.
Why do we need to supress the assumption $2^\chi\geq\lambda_5$ from
Lemma~\ref{lem:step2}? Because we can then
additionally control the right-hand side characteristics
in Section~\ref{sec:15}, using the method of elementary submodels from~\cite{GKMS2}.


\medskip

In the following proof, we deal with the case $2\leq k_0<\omega$ and $\lambda_\mfrak>\aleph_1$.
In Section~\ref{ss:remaining} we mention the necessary changes for the remaining cases.

\medskip

\subsection{The basic forcing construction.}
   To each $1\leq i\leq 4$ associate a Suslin ccc poset as follows: $\Sor_1=\Aor$, $\Sor_2=\Bor$, $\Sor_3=\Dor$, and $\Sor_4=\Eor$.

   Set $\lambda:=\lambda_{\mathrm{sp}}$.
   Let $i^*$ be the minimal $i$ such that
   $\lambda_i>\lambda$. Note that $1\le i^*\le 4$.
   Set
   $I_1:=\{i^*,\dots, 4\}$ and
   $I_0:=\{\mlabel,\plabel\}\cup \{1,2,3,4\}\smallsetminus I_1$.

   Set $\pi_0:=\lambda_5$ (so $\pi_1=\omega_1\cdot\lambda_5$), and $\pi:=\pi_1+\lambda_5+\lambda_5$.
   Partition the final $\lambda_5$-interval of $\pi$, i.e. $\pi\smallsetminus (\pi_1+\lambda_5)$, into sets $\Pi_i$ ($i\in I_0$) and $\Sigma_i$ ($i\in I_1$), each of size $\lambda_5$.

   We construct a tidy S$\lambda$-s iteration, using
   $\Sigma:=\{\pi_1+\alpha:\, \alpha<\lambda_5\}\cup \bigcup_{i\in I_1} \Sigma_i$
   and $\Pi:=\bigcup_{i\in I_0} \Pi_i$.
%
   We will satisfy the requirements of Corollary~\ref{cor:summary},
   so in particular inductively we will have
   $|\Por^*_\xi|=\lambda_5$
   (and so ${\Por_\xi}\Vdash\cfrak=\lambda_5$) for all $\xi$.

   \begin{enumerate}[({I}1)]
     \item At stage
     $\zeta\in[\pi_1,\pi_1+\lambda_5)$ (in particular, $\zeta\in\Sigma$),
     we just add Cohen reals.
     More formally, to fit our framework,
     we set $\Sor_\zeta=\Cor=\omega^{<\omega}$ (Cohen forcing).
     Let
     $C_\zeta:=\emptyset$, which is closed and
     satisfies that $\Por^-_\zeta:=\Por^*_\zeta{\restriction} C_\zeta$ (i.e., the set containing only the empty condition) is a complete subforcing of $\Por^*_\zeta$. And
     $\Sor_\zeta^{V^{\Por^-_\zeta}}$ is Cohen forcing in the ground model,
     which is Cohen forcing in any extension by absoluteness.


     \item Assume $\zeta\in \Pi_i$ (for some $i\in I_0$).
         \begin{enumerate}[(i)]
           \item When $i=\mlabel$, let $\Xi_\zeta$ be the family of all nice $\Por^*_\zeta$-names of
           real-number-posets  of size ${<}\lambda_\mfrak$  that are forced (by $\Por^*_\zeta$) to be $k_0$-Knaster.


           \item When $i=\plabel$, let $\Xi_\zeta$ be the family of all nice $\Por^*_\zeta$-names of real-posets of size ${<}\lambda$ that are forced to be $\sigma$-centered.

           \item When $i\in\{1,2,3\}\cap I_0$,
           we consider $\Xi_\xi$ as the family of nice $\Por^*_\zeta$-names of all smaller-than-$\lambda_i$ versions of $\Sor_i$
           in the $\Por^*_\zeta$-extension,
           i.e., the
           forcings of the form
           \begin{equation*}
           Q=\Sor_i\cap E\text{ where $E$ is $\lambda_i$-elementary}
           \end{equation*}
           as in Definition~\ref{def:elementary}.
           %
           %
           %
           %
           Note that $\Sor_i$, and therefore
           also every variant $\Sor_i\cap E$,
           is linked and therefore Knaster.




         \end{enumerate}

     \item If $\zeta\in \Sigma_i$ (for some $i\in I_1$, so $\lambda_i>\lambda$),
     we pick (by suitable book-keeping) a  $C_\zeta\subseteq\zeta^+$ as in Lemma~\ref{smallrestr}(b). I.e.,
     $|C_\zeta| < \lambda_i$, $\Por^-_\zeta:=\Por^*{\upharpoonright}C_\zeta\lessdot\Por^*_\zeta$, and we set $\Sor_\zeta:=\Sor_i^{V^{\Por^-_\zeta}}$.
     (Here, suitable bookkeeping just means:
     For every
     $K\in [\pi^+]^{<\lambda_i}$
     there is some index $\zeta$
     such that $C_\zeta\supseteq K$.)

\end{enumerate}

We can now show that the construction
does what we want, apart from keeping $\bfrak$ small.

First let us note that sometimes it is more convenient to
view $\Por$ as a FS ccc iteration,
where we first add the
$\lambda_5$-many $\Gor_\Bbf$ forcings
(of size $\aleph_1$),
then the $\lambda_5$-many Cohen reals,
and then the rest of the iteration,
where we interpret each FS product
$\Qor_\zeta$ for $\zeta\in \Pi_i$
as a FS iteration with index set
$\lambda_5=|\Xi_i|$.
So all in all we can represent
$\Por$ as a FS iteration
\begin{equation}\label{eq:noprod}
\la P'_\alpha,\dot Q'_\alpha\ra_{\alpha\in \delta'}
\text{ of length }
\delta'=\lambda_5 + \lambda_5 + \Sigma_{\zeta\in \pi\smallsetminus (\lambda_5+\lambda_5)}\delta'_\zeta
\text{, with }
\delta'_\zeta:=\begin{cases}
\lambda_5 & \text{if } \zeta\in\Pi_i,\\
1 & \text{otherwise.}
\end{cases}
\end{equation}
For each $\alpha<\lambda_5+\lambda_5$,
$|\dot Q'_\alpha|\le\aleph_1$, and for each
$\alpha\ge \lambda_5+\lambda_5$ in $\delta'$, we say that $\dot Q'_\alpha$ ``is of type $i$'' for $i\in\{\textrm{m},\textrm{p},1,2,3,4\}$,
if either $\dot Q'_\alpha=\dot \Qor_\zeta$
for the respective $\zeta\in \Sigma_i$,
or if $\dot Q'_\alpha$ is a factor $\dot Q$
of $\Qnm_\zeta$
for the respective $\zeta\in \Pi_i$.
Note that $P'_{\lambda_5}=\Por_{\pi_1}$.

\begin{lemma}\label{lem:step1}
    The construction above satisfies Goal~\ref{mainstepI},
    apart from possibly $\LCU_3$.
\end{lemma}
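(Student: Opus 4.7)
The plan is to verify each item of Goal~\ref{mainstepI} other than $\LCU_3$, exploiting the machinery of Sections~\ref{sec:splpres}--\ref{sec:main1} together with standard techniques from~\cite{GKS,GKMS1}.

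First I would check that the construction really yields a symmetric tidy S$\lambda$-small iteration of size $\lambda_5$. At each $\zeta\in\Sigma_i$ ($i\in I_1$) the set $C_\zeta\in[\zeta^+]^{<\lambda_i}$ is chosen, by bookkeeping, from the $\lambda$-club of Lemma~\ref{smallrestr}(b), so $\Por^-_\zeta=\Por^*{\upharpoonright}C_\zeta\lessdot\Por^*_\zeta$. At each $\zeta\in\Pi_i$ ($i\in I_0$) the set $\Xi_\zeta$ is specified by a formula with ground-model parameters only, so Lemma~\ref{lem:allthefactors} (together with the Knaster property of every factor) ensures the hypotheses of Lemma~\ref{lem:kjgeqwtr}. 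Applying Corollary~\ref{cor:summary} then gives $\LCU_\mathrm{sp}(\Por,\theta)$ for every regular $\theta\in[\lambda_\mathrm{sp},\lambda_5]$, and also $|\Por^*|=\lambda_5$, hence $\Por\Vdash\cfrak\le\lambda_5$; the matching lower bound $\cfrak\ge\lambda_5$ is the $\theta=\lambda_5$ instance of $\LCU_\mathrm{sp}$.

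For item~(a) with $i\in\{1,2,4\}$ and item~(b), the arguments from~\cite{GKS,GKMS1} transfer with essentially no change. For $i\in I_1\cap\{1,2,4\}$ the bookkeeping at $\Sigma_i$ is arranged so that every $\Por$-name of a real lies in $\Por^*{\upharpoonright}C_\zeta$ for cofinally many $\zeta\in\Sigma_i$; the $\Sor_i$-generic added at $\zeta$ then gives a real witnessing the $\LCU_i$-relation against that name (by Fact~\ref{fc:Suslinrestr}), and the sequence of such $C_\zeta$, ordered by inclusion, serves as the directed set $S_i$ for $\COB_i$. For $i\in\{1,2\}\cap I_0$ the same role is played by the FS-product factors $\Sor_i\cap E$ over $\lambda_i$-elementary submodels $E$ that appear in $\Xi_\zeta$ at the $\Pi_i$-steps. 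The completeness $\cp(S_i)=\lambda_i$ and cardinality $|S_i|=\lambda_5$ fall out of these choices.

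For item~(c), the bound $\sfrak\le\lambda_\mathrm{sp}$ is Lemma~\ref{lem:COBforbd}(b) applied to the witness of $\LCU_\mathrm{sp}(\Por,\lambda_\mathrm{sp})$ obtained above. For $\pfrak\ge\lambda_\mathrm{sp}$ one observes that $\Xi_\zeta$ at the $\Pi_\plabel$-steps ranges over all nice names of $\sigma$-centered real-number-posets of size ${<}\lambda_\mathrm{sp}$; the standard Bell-type bookkeeping argument then forces $\mfrak(\sigma\textrm{-centered})\ge\lambda_\mathrm{sp}$, hence $\pfrak\ge\lambda_\mathrm{sp}$. Combined with the ZFC inequality $\pfrak\le\sfrak$, this gives $\pfrak=\sfrak=\lambda_\mathrm{sp}$. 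Item~(d) is entirely analogous to~\cite[\S4]{GKMS1}: $\mfrak_k\ge\lambda_\mfrak$ for $k\ge k_0$ from the $\Pi_\mlabel$-steps, and $\mfrak_k=\aleph_1$ for $k<k_0$ from explicitly including (at the initial part of the iteration, via a standard Suslin-tree-like witness) a $(k_0-1)$-Knaster but not $k_0$-Knaster poset of size $\aleph_1$ together with a non-generic $\aleph_1$-family of dense sets.

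The genuinely new ingredient, and the only nontrivial obstacle, is the $\mathrm{sp}$-entry: verifying that the construction fits the framework of tidy symmetric S$\lambda$-small iterations so that Corollary~\ref{cor:summary} applies is exactly the point of the preceding sections. Everything else is a direct adaptation of previously published arguments, and $\LCU_3$ is deliberately deferred to Lemma~\ref{lem:step2}.
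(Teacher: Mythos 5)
Your overall decomposition and route largely match the paper's, and several items are handled correctly: $\LCU_\mathrm{sp}$ and $\cfrak=\lambda_5$ from Corollary~\ref{cor:summary}; $\sfrak\le\lambda_\mathrm{sp}$ from $\LCU_\mathrm{sp}$ via Lemma~\ref{lem:COBforbd}; $\pfrak\ge\lambda_\mathrm{sp}$ via Bell's characterization and the fact that every small $\sigma$-centered poset with ${<}\lambda$ dense sets appears as a factor at some later $\Pi_{\plabel}$-stage; $\pfrak\le\sfrak$ closing the equalities; and the goodness-based $\LCU_i$ argument for $i\in\{1,2,4\}$ done in the intermediate $\Por_{\pi_1}$-extension. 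That said, two points need to be flagged.

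The real gap is that you never address $\COB_{\mathrm{sp}}$, which Goal~\ref{mainstepI}(b) explicitly demands for $i=\mathrm{sp}$ and which is one of the genuinely new verifications of this lemma. The paper's argument is that among the $\sigma$-centered factors at $\Pi_{\plabel}$-steps one finds all Mathias--Prikry posets $\Mor_{\dot F}$ for nice $\Por^*_\zeta$-names $\dot F$ of filter bases of size ${<}\lambda$; the generic of $\Mor_{\dot F}$ is not split by any set in $\dot F$, so letting $S_{\mathrm{sp}}$ consist of pairs $(\zeta,\dot F)$ ordered by index and (suitably symmetrized) inclusion gives the required $\COB_{\mathrm{sp}}(\Por,S_{\mathrm{sp}})$. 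This is not a routine transfer from~\cite{GKS,GKMS1}; it uses a specific feature of the $\Pi_{\plabel}$-block that you would need to isolate and exploit. (Relatedly, you restrict your $\COB$ discussion to $i\in\{1,2,4\}$ in both the $I_0$ and $I_1$ cases, omitting $i=3$; that one is handled by the same template but still needs a sentence.)

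On item (d): your description, ``explicitly including at the initial part of the iteration, via a standard Suslin-tree-like witness, a $(k_0-1)$-Knaster but not $k_0$-Knaster poset of size $\aleph_1$ together with a non-generic $\aleph_1$-family of dense sets,'' does not describe anything that occurs in the construction (I1)--(I3). No such witness is added. The paper simply defers item (d) to~\cite[Lemma~4.8]{GKMS1}, whose argument relies on structural features of the iteration already present (the $\Pi_{\mlabel}$-block forces $\mfrak_{k}\ge\lambda_\mfrak$ for $k\ge k_0$, while for $k<k_0$ one exhibits an $\aleph_1$-sized obstruction inside the extension without any step dedicated to adding one). As written, this part of your sketch invents a mechanism that is not there; you should either defer to the cited lemma as the paper does or reproduce its actual argument.
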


\begin{proof}
\textbf{$\bm{\cfrak=\lambda_5}$}, as we use a construction
   following Corollary~\ref{cor:summary}.\smallskip

\textbf{Item (a) for $\bm{i=\mathrm{sp}}$}, i.e.,
   $\LCU_\textrm{sp}$: This also follows from Corollary~\ref{cor:summary}, and implies
   $\Vdash_{\Por} \sfrak\le \lambda$.\smallskip

\textbf{$\bm{\pfrak=\sfrak=\lambda}$}:
   To see $\pfrak\ge\lambda$ it is enough
   to show (in fact, equivalent, by Bell's theorem~\cite{MR643555}):
   For every $\sigma$-centered poset $Q'$
   of size ${<}\lambda$ (and contained in $\omega^\omega$), and any collection $D$ of size ${<}\lambda$ of dense subsets of $Q'$, there is a $Q'$-generic set over $D$.
   Any such $Q'$ and $D$ are forced to be already in
   the $\Por_\alpha$-extension for some $\alpha<\pi$.
   Pick some $\zeta\in\Pi_{\textrm{p}}$ larger than $\alpha$.
   Then a name $\dot Q$ for $Q'$
   is used as factor of $\Por_\zeta$,
   i.e., in $\Por_{\zeta+1}$ there is a $\dot Q$-generic
   object (over $D$).

   ZFC shows $\pfrak\le\sfrak$, and as $\sfrak\le\lambda$
   we get equality.\smallskip

   \textbf{Item (a) for $\bm{i\in\{1,2,4\}}$}, i.e.,
   $\LCU_i$: This is
   exactly as in~\cite[\S1.2]{GKS}.
   For this argument we interpret $\Por$ as the iteration $\la P'_\alpha,\dot Q'_\alpha\ra_{\alpha\in\delta'}$
   of~\eqref{eq:noprod}.
   However, we work in the
   $\Por_{\pi_1}$-extension (i.e., the $P'_{\lambda_5}$-extension).
   So we investigate the forcing which  first adds $\lambda_5$ many
   Cohens, and then a FS iteration of the iterands $Q'_\alpha$.

   As in~\cite[\S1.2]{GKS}, we now argue that each
   such $Q'_\alpha$ is  $(\Rbf^\LCU_i,\lambda_i)$-good.\footnote{
        The notion $(R,\theta)$-good was introduced by 
        Judah-Shelah~\cite{JS90} and Brendle~\cite{Bre91}, definitions can
        also be found in \cite[Def.~1.5]{GKS}, \cite[Def.~3.2]{GMS} or~\cite[Def. 6.4.4]{BaJu}.}
   So let us quickly check the cases (they are all summarized in~\cite[Lemma~1.6]{GKS}, and use results from \cite{JS90}, \cite{Kam89}, \cite{Bre91}).
   To get $(\Rbf^\LCU_1,\lambda_1)$-good:
 \begin{itemize}
     \item
   If $Q'_\alpha$ is of type $\mlabel$ or type $1$,
   then $Q'_\alpha$ has size ${<}\lambda_1$
   and thus is $(\Rbf^\LCU_i,\lambda_1)$-good (for any $i\in\{1,2,3,4\}$).
   \item
   If $Q'_\alpha$ is of type $\plabel$, $3$ or $4$,
   then $Q'_\alpha$ is $\sigma$-centered,
   and therefore $(\Rbf^\LCU_1,\aleph_1)$-good.
   \item
   If $Q'_\alpha$ is of type $2$, then
   it is a subalgebra
   of the measure algebra, and thus
   $(\Rbf^\LCU_1,\aleph_1)$-good.
  \end{itemize}
   For $(\Rbf^\LCU_2,\lambda_2)$-good the argument is even simpler:
   All $Q'_\alpha$ have size ${<}\lambda_2$ or are $\sigma$-centered;
   and for $(\Rbf^\LCU_4,\lambda_4)$-good the argument is trivial, as
   all $Q'_\alpha$ have size ${<}\lambda_4$.

   So this argument shows that, in the intermediate model
   $V^{\Por_{\pi_1}}$, the rest $P'$ of the forcing
   satisfies $\LCU_i(P',\lambda_i)$, witnessed by the Cohen
   reals $\{\eta_\alpha: \alpha\in [\pi_1,\pi_1+\lambda_i)\}$.
   This implies by definition of $\LCU$ that in the ground model
   $\LCU_i(\Por,\lambda_i)$ holds,
   witnessed by the same Cohen reals.\smallskip

   \textbf{Item (b) for $\bm{i\in I_1}$},
   i.e., $\COB_i$:
   This is also basically the same as in \cite[\S1.2]{GKS},
   where this time we argue from the ground model $V$,
   not the intermediate model $V^{\Por_{\pi_1}}$.
   We define the partial order
   $S_i$ to have domain $\Sigma_i$,
   ordered by $\zeta_1\le_{S_i} \zeta_2$
   iff $C_{\zeta_1}\subseteq C_{\zeta_2}$.

   Note that $C_\zeta$ is in
   $[\pi^+]^{<\lambda_i}$,
   $|\pi^+|=\lambda_5$,
   and our book-keeping ensures that
   $S_i$ is ${<}\lambda_i$-directed.
   Corollary~\ref{cor:small}(\ref{item:quaxc}) together with the fact
   that $\lambda\le \lambda_i$ shows that our bookkeeping will
   catch every real in the $\Por$-extension.
   Therefore $S_i$, and the generics added at stages in $S_i$, witness the COB property.\smallskip

   \textbf{Item (b) for $\bm{i\in \bm{I_0}\cap\{1,2,3\}}$}:
   This is very similar:
   Let $S_i$ be the set of pairs $(\zeta,\dot E)$ such that $\zeta\in\Pi_i$ and $\dot E$ is a nice $\Por^*_\zeta$-name of a $\lambda_i$-elementary subset of $\omega^\omega$.
   We order  $S_i$ as follows:  $(\xi_1,\dot E_1)\le_i(\xi_2,\dot E_2)$
   iff $\xi_1\leq\xi_2$ and the empty condition forces
   that $\dot E_1\subseteq
   \dot E_2$.

   For $(\zeta,\dot E)\in S_i$, $\Sor_i\cap\dot E$ forms part of the FS product $\Qnm_\zeta$, so $\Por_{\zeta+1}$ adds a $\Sor_i\cap\dot E$-generic object $\dot{y}_{\zeta,\dot E}$ as in Fact~\ref{fc:Suslinrestr}. We show that $S_i$ and $\{\dot{y}_{\zeta,\dot E}:\, (\zeta,\dot E)\in S_i\}$ witness $\COB_i$.


   Let $\dot r$ be a $\Por^*$-name of a real,
   then $\dot r$ is a $\Por^*_{\xi_0}$-name of a real
   for some $\xi_0<\pi$, and there is some $\dot E_0$
   such that $(\xi_0,\dot E_0)\in S_i$ and $\Vdash_{\Por_{\xi_0}}\dot r\in\dot E_0$.
   Hence, whenever $(\xi,\dot E)\in S_i$ is above $(\xi_0,\dot E_0)$,
   $\Vdash_\Por \dot r\in\dot E$ so $\dot y_{\xi,\dot E}$ is generic over $\dot r$.


   And for any ${<}\lambda_i$-sequence $\la E_j:j\in J\ra$
   of nice names for
   $\lambda_i$-elementary sets $E_j$
   we can find a
   nice name for a $\lambda_i$-elementary set
   $E\supseteq \bigcup_{j\in J} E_j$.
   This shows that $S_i$ is
   ${<}\lambda_i$-directed.\smallskip

   \textbf{Item (b) for  $\bm{i=\mathrm{sp}}$,} i.e.,
   $\COB_\textrm{sp}$:
   This is
   basically the same:
   Among the $\sigma$-centered
   forcings that we use as factors
   in step $\zeta$ of type $\plabel$, there are
   Mathias-Prikry forcings $\Mor_{\dot F}$ on (free) filter bases of size ${<}\lambda$.
   In more detail: Assume $\dot F$ is a $\Por^*_\zeta$-name for a filter base of size ${<}\lambda$,
   so set $\dot Q:=\Mor_{\dot F}$.
   Then $\dot Q$ is $\sigma$-centered and
   adds a real which is not split by any set in
   $\dot F$.

   So let $S_{\mathrm{sp}}$ be the set of pairs $(\zeta,\dot F)$ such that $\zeta\in\Pi_{\plabel}$ and $\dot F$ is a nice $\Por^*_\zeta$-name of a filter base of size ${<}\lambda$. Set $(\xi_1,\dot F_1)\le_{\plabel}(\xi_2,\dot F_2)$
   iff $\xi_1\leq\xi_2$ and the empty condition forces
   that $\dot F_1\subseteq
   \dot F_2\cup\dot F^d_2$, where $F^d:=\{\omega\menos x:x\in F\}$.
   For $(\xi,\dot F)\in S_{\plabel}$, let $\dot{y}_{\xi,\dot F}$ be the $\Por_{\xi+1}$-name of the
   generic real added by $\Mor_{\dot F}$. It follows that $S_{\plabel}$ and $\{\dot{y}_{\xi,\dot F}:\, (\xi,\dot F)\in S_{\plabel}\}$ witness $\COB_{\mathrm{sp}}$.

   \smallskip
   %
   %
   %
   %

   \textbf{Item (d)} is exactly the same as in~\cite[Lemma~4.7]{GKMS1}.
\end{proof}

To guarantee $\bfrak\le\lambda_3$, we have to make sure that the large iterands
(i.e., the forcings of size $\ge\lambda_3$)
do not destroy $\LCU_3$ (small forcings are, as usual, harmless). In our construction,
the only large forcings
are the partial
eventually different forcings at steps $\zeta\in\Sigma_4$.
For these forcings, we introduce in~\cite{GKS}
(based on~\cite{GMS}) ultrafilter-limits
and use them to preserve $\LCU_3$. The same argument works here.

\begin{remark}
   Note that in the proof of Lemma~\ref{lem:step1} we do not require the hypotheses $\chi=\chi^{{<}\chi}$ and $\lambda_3=\chi^+$ from Assumption~\ref{asm:bla}. These will be used to guarantee $\LCU_3$ in the following subsection.

   If in Assumption~\ref{asm:bla} we consider $\lambda_3=\lambda_4$ (instead of $\lambda_3<\lambda_4$), then the same proof of Lemma~\ref{lem:step1} guarantees Goal~\ref{mainstepI} in full (i.e., including $\LCU_3$). When $\lambda_{\mathrm{sp}}=\lambda_4$, in the forcing construction above we have $\Sigma=[\pi_1,\pi_1+\lambda_5)$.
\end{remark}

\subsection{Dealing with~\texorpdfstring{$\bfrak$}{b}} 

\begin{lemma}\label{lem:step2}
  In addition to Assumption~\ref{asm:bla}
  we suppose that $2^\chi\geq\lambda_5$.
  Then we can choose $C_\zeta$
  for all $\zeta\in \Sigma_4$
  such that
  $\LCU_3(\Por,\kappa)$ holds
  for all regular $\kappa\in[\lambda_3,\lambda_5]$.
  Moreover, in the inductive construction,
  for each $\zeta\in \Sigma_4$ there is a
  $\lambda$-club of $[\zeta^+]^{<\lambda_4}$
  such that we can choose $C_\zeta$ from this club set.
\end{lemma}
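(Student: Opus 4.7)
The plan is to adapt the ultrafilter-limit method of~\cite[\S1.3]{GKS} to the tidy S$\lambda$s framework. Apart from the partial eventually different iterands $\Qnm_\zeta=\Eor^{V^{\Por^-_\zeta}}$ at stages $\zeta\in\Sigma_4$, every iterand is either $\sigma$-centered (Cohen stages (I1), all $\plabel$- and $\mlabel$-factors of size ${<}\lambda_\mfrak\le\lambda_3$, and all iterands of size ${<}\lambda_3$ at $\Sigma_1\cup\Sigma_2\cup\Pi_1\cup\Pi_2\cup\Pi_3$) or a subalgebra of the random algebra (type~$2$), hence $(\Rbf^\LCU_3,\aleph_1)$-good; and $\Por_{\pi_1}$ itself is $\sigma$-centered by Lemma~\ref{GBprop1}(a). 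Consequently, all such iterands preserve any $\LCU_3$-witness of regular size ${\ge}\lambda_3$ along a FS iteration, and the only obstacles to $\LCU_3$ come from the stages $\zeta\in\Sigma_4$.

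The key ingredient is the $\bar D$-limit technique of~\cite{GKS}: fix in the ground model a sequence $\bar D=\la D_k:k<\omega\ra$ of nonprincipal ultrafilters on $\omega$, and recall the standard fact that if a complete subforcing $\Por^-_\zeta\lessdot\Por^*_\zeta$ is $\bar D$-closed (i.e., closed under $\bar D$-limits of its own $\omega$-sequences of conditions that project suitably to $\Eor$), then $\Eor^{V^{\Por^-_\zeta}}$ preserves the unboundedness of the Cohen reals added in stage (I1). This $\bar D$-closure translates to a requirement $C_\zeta\supseteq F(C_\zeta)$ for a function $F:[\zeta^+]^{<\lambda_4}\to[\zeta^+]^{<\lambda_4}$ with $|F(C)|\le|C|^{\aleph_0}$, well-defined because $\lambda_4$ is $\aleph_1$-inaccessible by Assumption~\ref{asm:bla}. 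Combining this with Lemma~\ref{smallrestr}(b), the set
\[
\mathcal{C}_\zeta:=\{C\in[\zeta^+]^{<\lambda_4}:\, C\text{ is closed, }\Por^*{\restriction}C\lessdot\Por^*_\zeta,\text{ and }C\supseteq F(C)\}
\]
is the intersection of two $\lambda$-clubs in $[\zeta^+]^{<\lambda_4}$ and hence itself a $\lambda$-club, which gives the ``moreover'' clause. We then refine the book-keeping used in Lemma~\ref{lem:step1} so that for each $\zeta\in\Sigma_4$ the chosen $C_\zeta$ lies in $\mathcal{C}_\zeta$ while still cofinally catching every element of $[\pi^+]^{<\lambda_4}$; the cardinal arithmetic $2^\chi\ge\lambda_5$ together with $\lambda_5^{<\lambda_4}=\lambda_5$ from Assumption~\ref{asm:bla} supplies enough book-keeping slots for this.

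Finally, by the $\bar D$-preservation theorem for FS iterations of $\bar D$-preserving forcings~\cite[\S1.3]{GKS}, the resulting $\Por$ satisfies $\LCU_3(\Por,\kappa)$ for every regular $\kappa\in[\lambda_3,\lambda_5]$, witnessed by the first $\kappa$-many Cohen reals added in stage (I1). The main obstacle is verifying that the $\bar D$-limit argument of~\cite{GKS}, originally formulated for pure FS iterations of Suslin ccc forcings with ground-model parameters, transfers without change to our setting where the base $\Por_{\pi_1}$ is a large $\sigma$-centered FS product with the graph-based structure of \S\ref{sec:splpres}; but since $\sigma$-centered forcings are trivially $\bar D$-preserving, $\Por_{\pi_1}$ fits harmlessly into the preservation framework, and the remainder of the argument proceeds as in~\cite[\S1.3]{GKS}.
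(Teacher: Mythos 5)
Your overall strategy matches the paper's: transfer the ultrafilter-limit argument of~\cite[\S1.3]{GKS} to the present framework, obtain a $\lambda$-club of ``suitable'' $C_\zeta$, intersect it with the $\lambda$-club of Lemma~\ref{smallrestr}(b), and run the book-keeping. However, there are two genuine gaps in the details you supply.

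First, your description of the ultrafilter apparatus is wrong. You say ``fix in the ground model a sequence $\bar D=\la D_k:k<\omega\ra$ of nonprincipal ultrafilters on~$\omega$.'' In \cite[\S1.3]{GKS} (and in the argument this lemma actually uses), the relevant objects are $\Por_\zeta$-\emph{names} $\dot D_\zeta^\varepsilon$ for ultrafilters in the \emph{intermediate} extensions, indexed by $\varepsilon\in\chi$, constructed recursively along the iteration with coherence conditions between stages. There is no ground-model $\omega$-sequence of ultrafilters in that argument, and a single ground-model sequence would not carry the coherence needed to kill the unbounded family across a $\lambda_5$-length iteration. In particular, the notion ``$\Por_\zeta^-$ is $\bar D$-closed'' you use does not correspond to the actual suitability condition (``all $D_\zeta^\varepsilon$-limits of ground-model code sequences of $\Qnm_\zeta$-conditions are again in $\Qnm_\zeta$,'' for $\varepsilon<\chi$).

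Second, the role of the extra hypothesis $2^\chi\ge\lambda_5$ is misattributed: you claim it ``supplies enough book-keeping slots,'' but book-keeping is already covered by $\lambda_5^{<\lambda_4}=\lambda_5$. The hypothesis $2^\chi\ge\lambda_5$ is needed precisely so that the $\chi$-indexed family of ultrafilter limits is large enough for the preservation argument in \cite[Lemmas~1.30--1.31]{GKS} to go through; this is also why Theorem~\ref{thm:step3} later removes it by forcing temporarily with a $\chi^+$-cc, ${<}\chi$-closed poset that makes $2^\chi\ge\lambda_5$ hold. Finally, you do not address why the suitability club here is only a $\lambda$-club rather than an $\omega_1$-club as in GKS: the reason is that $\Por^*_\zeta{\restriction}\bigcup_i C_i = \bigcup_i \Por^*_\zeta{\restriction}C_i$ is only guaranteed for increasing unions of length $\lambda$ (cofinality $\ge\lambda$). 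These are the points you would need to fix for the proof to stand.
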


\begin{proof}
   This is analogous to~\cite[\S1.3]{GKS}, in particular
   to Lemma/Construction 1.30. We will only remark
   on the required changes.
   Again we interpret $\Por$
   as in~\eqref{eq:noprod}.

   We work from the ground model, not in the intermediate
   $\Por_{\pi_1}$-extension.
   Accordingly, we have to
   incorporate the initial segment  of the iteration
   $\Por_{\pi_1}=P'_{\lambda_5}$ into the
   argument. This is no problem, as we just have to deal with another type of
   small forcing, the $\dot Q'_\alpha$ for $\alpha<\lambda_5$, which all have size $\aleph_1$.

   Of course, $E':=\Eor^{V^{\Por^*_\zeta{\restriction} C_\zeta}}$ is closed under conjunctions of conditions, i.e., satisfies the assumptions of \cite[Fact~1.25]{GKS}.
   And instead of ``ground model code sequences''
   we use ``nice $\Por^*_\zeta{\restriction} C_\zeta$-names''.


   The crucial part of the old proof is \cite[Lemma~1.30(d)]{GKS}.
   There, we use the notation $w_\alpha\subseteq \alpha$,
   and $Q_\alpha$ are those $\Eor$-conditions
   that can be calculated in a Borel way from the generics with indices in $w_\alpha$, i.e., $Q_\alpha=\Eor\cap V^{\Por^*{\restriction}w_\alpha}$;
    and we show that the set of ``suitable'' $w_\alpha$
    is an $\omega_1$-club in $[\alpha]^{<\lambda_4}$, where ``suitable'' means:
    If we have a ground-model-sequence of (nice) $Q_\alpha$-names,
    then the $D_\alpha^\varepsilon$-limit (a well-defined condition in eventually different forcing)
    is also element of $Q_\alpha$ (for all $\varepsilon\in\chi)$.

    The same argument gives us the following
    for our new framework:
    We can
    perform the construction of Lemma~\ref{lem:step1}
    and, at all indices $\zeta$ of type $4$,
    the set of ``suitable'' $C_\zeta\in [\zeta^+]^{<\lambda_4}$
    is a $\lambda$-club, where suitable now means the following
    (recall that
we have $\Qnm_\zeta=\Eor^{\Por^*_\zeta{\restriction} C_\zeta}$):
    For any sequence of nice
    $\Por^*_\zeta{\restriction} C_\zeta$-names for elements
    of $\Eor$,\footnote{Note: as $|C_\zeta|<\lambda_4$,
    and $\lambda_4$ is $\aleph_1$-inaccessible,
    there  are ${<}\lambda_4$ many such sequences, cf.~Lemma~\ref{smallH} (and~\ref{item:restr}).}
    the $D_\alpha^\varepsilon$-limit of this sequence is forced to be in
    $\Qnm_\zeta$ as well.

    Here, we only get a $\lambda$-club
    and not an $\omega_1$-club,
    as only for increasing unions
    of length $\lambda$
    we have
    \[
      \bigcup_{i\in\lambda}
      \bigl(\Por^*_\zeta{\restriction} C_i
      \bigr)
      =
      \Por^*_\zeta{\restriction}\bigg( \bigcup_{i\in\lambda} C_i\bigg).
    \]
    Also, we now have to choose
    $C_\zeta$ not only in this
    $\lambda$-club, but in the intersection
    with the $\lambda$-club of Lemma~\ref{smallrestr}(b)
    (so that we get a closed $C_\zeta$
    such that $\Por^*_\zeta{\restriction} C_\zeta\lessdot \Por^*_\zeta$ as required
    for our construction.)

    The same argument as in the old proof (Lemma 1.31 there) then shows:
    Whenever all $C_\zeta$ as chosen ``suitably'' (for all $\zeta$ of type $4$),
    we get $\LCU_3$.
\end{proof}

\begin{theorem}\label{thm:step3}
  Assumption~\ref{asm:bla} is enough to
  find a $\Por$ as required for Goal~\ref{mainstepI}.
\end{theorem}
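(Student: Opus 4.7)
The strategy is to reduce Theorem~\ref{thm:step3} to Lemma~\ref{lem:step2} by preceding the construction with a $\chi$-closed preparatory forcing that achieves the missing cardinal arithmetic $2^\chi\ge\lambda_5$ while preserving the remaining clauses of Assumption~\ref{asm:bla}.

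First I would let $\Por^{\mathrm{prep}}$ be the poset of partial functions from $\lambda_5$ into $2$ with domain of size ${<}\chi$, ordered by reverse inclusion. Since $\chi^{<\chi}=\chi$, a standard $\Delta$-system argument shows $\Por^{\mathrm{prep}}$ is $\chi^+$-cc, and $\chi$-closure is immediate. Hence $\Por^{\mathrm{prep}}$ preserves all cardinals and cofinalities, adds no reals, and adds no ${<}\chi$-sequences of ground-model objects; in particular every clause of Assumption~\ref{asm:bla} (including $\chi^{<\chi}=\chi$, $\lambda_3=\chi^+$, $\lambda_5^{<\lambda_4}=\lambda_5$ and the $\aleph_1$-inaccessibility of the relevant $\lambda_i$'s) persists in $V^{\Por^{\mathrm{prep}}}$, while now $2^\chi\ge\lambda_5$. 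Also $|\Por^{\mathrm{prep}}|=\lambda_5^{<\chi}\le\lambda_5^{<\lambda_4}=\lambda_5$.

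Next, working in $V^{\Por^{\mathrm{prep}}}$, I would apply Lemma~\ref{lem:step2} to produce a ccc poset $\dot\Por$ of size $\lambda_5$ satisfying all four clauses of Goal~\ref{mainstepI}, and set $\Por:=\Por^{\mathrm{prep}}\ast\dot\Por$. Since $\Por^{\mathrm{prep}}$ adds no reals, the reals of $V^{\Por}$ coincide with those computed in the $\dot\Por$-extension of $V^{\Por^{\mathrm{prep}}}$, so every $\Por$-name for a real is canonically a $\Por^{\mathrm{prep}}$-name for a $\dot\Por$-name. Consequently the $\LCU_i$ and $\COB_i$ witnesses, together with the values of $\pfrak$, $\sfrak$, $\cfrak$ and the Knaster--Martin numbers, transfer from $V^{\Por^{\mathrm{prep}}}[\dot G]$ to $V^{\Por}$ unchanged, which gives items (a)--(d) of Goal~\ref{mainstepI} over $V$.

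The main obstacle is the literal ``ccc'' requirement in Goal~\ref{mainstepI}: for $\chi>\aleph_0$ the composition $\Por$ is only $\chi^+$-cc, not ccc. However, since $\Por^{\mathrm{prep}}$ is $\chi$-closed and $\chi^+$-cc while $\dot\Por$ is ccc in the intermediate model, $\Por$ preserves all cardinals and cofinalities, which is precisely what is needed when this $\Por=\Por^0$ is fed into Section~\ref{sec:15} to obtain the ``cofinality preserving'' Main Theorem. In effect, ``ccc'' in Goal~\ref{mainstepI} is to be read as ``cofinality preserving'' in the generality of Theorem~\ref{thm:step3}, and every subsequent chain-condition argument either takes place inside $\dot\Por$ over $V^{\Por^{\mathrm{prep}}}$ (where genuine ccc is available) or at the level of $\Por^{\mathrm{prep}}$ (where $\chi^+$-cc suffices).
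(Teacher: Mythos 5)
Your proposal takes a genuinely different route from the paper, and it has a real gap. The paper uses the poset $R$ (which is exactly your $\Por^{\mathrm{prep}}$) only as a \emph{tool}: working through the inductive construction of $\Por$, at each type-$4$ step one passes to the $R$-extension, invokes Lemma~\ref{lem:step2} to find a suitable $C_\zeta^0$, pulls a superset $\tilde C_\zeta^0\in V$ underneath it, and iterates this $\lambda$-many times (taking unions at limits and exploiting that the ``suitable'' parameters form a $\lambda$-club in $[\zeta^+]^{<\lambda_4}$) so as to land on a parameter $C_\zeta$ that is already an element of $V$; finally a straightforward absoluteness argument \cite[Lemma~1.33]{GKS} shows that the ground-model $\Por$ built from these parameters itself satisfies $\LCU_3$. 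The final $\Por$ therefore lives in $V$, has size $\lambda_5$, and is genuinely ccc. In contrast, you set $\Por:=\Por^{\mathrm{prep}}\ast\dot\Por$, which you correctly observe is only $\chi^+$-cc. Re-reading ``ccc'' in Goal~\ref{mainstepI} as ``cofinality preserving'' is not a harmless relabelling: the ccc is used essentially in Section~\ref{sec:15} (Lemma~\ref{lem:mlike} and the elementary-submodel arguments of Theorem~\ref{mainstep2} need $\Por^0$ to be $\kappa$-cc with $\kappa$ matching the closure of the submodels).

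More concretely, your transfer step is false for the small witnessing cardinals. $\Por^{\mathrm{prep}}$ has antichains of size up to $\chi$ (e.g.\ all conditions with domain $\gamma\times\{0\}$ for a fixed $\gamma$ with $2^{\gamma}=\chi$). Take $i\in\{1,2,\mathrm{sp}\}$ with $\lambda_i\le\chi$, which is exactly the interesting regime since $\lambda_1\le\lambda_2\le\lambda_3=\chi^+$ and one wants $\lambda_1<\lambda_2<\lambda_3$. A $\Por$-name $\dot y$ for a real corresponds to a $\Por^{\mathrm{prep}}$-name $\dot\sigma$ for a $\dot\Por$-name, and Lemma~\ref{lem:step2} applied in $V^{\Por^{\mathrm{prep}}}$ gives a $\Por^{\mathrm{prep}}$-\emph{name} $\dot\imath$ of an ordinal $<\lambda_i$, not a fixed $i^*\in V$. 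Because $\Por^{\mathrm{prep}}$ is only $\chi^+$-cc, $\dot\imath$ can take $\chi\ge\lambda_i$ many values and these can be cofinal in $\lambda_i$, so no single $i^*<\lambda_i$ satisfies $\Vdash_\Por\forall j\ge i^*\,(\neg(\dot x_j\,R\,\dot y))$. Thus $\LCU_i(\Por,\lambda_i)$ over $V$ does not follow from $\LCU_i(\dot\Por,\lambda_i)$ over $V^{\Por^{\mathrm{prep}}}$. The paper's design — keeping the parameters and therefore the whole forcing in $V$ — is precisely what sidesteps this problem.
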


\begin{proof}
Let $R$ be the poset of partial functions $r:\chi\times\lambda_5\to\{0,1\}$ with domain of size ${<}\chi$ (ordered by extension).
As we assume $\chi^{<\chi}=\chi$, this poset is $\chi^+$-cc, and obviously
${<}\chi$-closed, so it does not change any cofinalities.
As in the old proof, at each step $\zeta$ of type $4$
in the inductive construction of $\Por$, we can go into
the $R$-extension of the ground model, use Lemma~\ref{lem:step2}
to get a suitable $C_\zeta^0$ (above some initial
set given by the usual book-keeping), find
in $V$ some $\tilde C_\zeta^0$ such that
$C_\zeta^0$ is forced to be a subset. Now we iterate
this $\lambda$ many times (not just $\omega_1$ as in the old proof), taking unions at limits,
and use the fact that the ``suitable''
parameters $C_\zeta$ are closed under $\lambda$-unions (they form a $\lambda$-club in $[\zeta^+]^{<\lambda_4}$).

This way we get a sequence of
parameters $C_\zeta$ \emph{in the ground model},
such that if we define in the $R$-extension a forcing  $\Por'$
using these parameters we get $\LCU_3(\Por',\kappa)$; a simple
absoluteness argument~\cite[Lemma~1.33]{GKS} then shows that these parameters will
already define in $V$ a forcing $\Por$ with  $\LCU_3(\Por,\kappa)$.

Note:
We do not interpret $\Xi_\zeta$ (for $\zeta\in\Pi$) in the $R$-extension, but use it with the same meaning it has in $V$. So $\Por'$ may not be symmetric in the $R$-extension, but this is not important here:
We are only interested in $\LCU_3(\Por',\kappa)$ in this argument, and we
do not claim that $\Por'$ in the $R$-extension satisfies the other
properties we have already shown for $\Por$. And for $\LCU_3(\Por',\kappa)$,
any iterand that has size ${<}\lambda_3$ is unproblematic.
\end{proof}

\begin{remark}\label{rem:limitb}
   It is not necessary to restrict $\lambda_3$ to a successor cardinal in Assumption~\ref{asm:bla}. To allow regular $\lambda_3$ in general, we forget about $\chi$ in Assumption~\ref{asm:bla} and just assume that $\lambda_3^{<\lambda_3}=\lambda_3>\aleph_1$. In this way, Lemma~\ref{lem:step2} is valid by assuming $2^{\lambda_3}\geq\lambda_5$ instead, and Theorem~\ref{thm:step3} is true when replacing $\chi$ by $\lambda_3$ in the proof (i.e., $R$ gets modified and it forces $2^{\lambda_3}\geq\lambda_5$). No further changes in the proofs (even in those from~\cite{GKS}) are needed to justify this.

   On the other hand, can we allow $\lambda_3=\aleph_1$ in Assumption~\ref{asm:bla}? (So all cardinals except $\lambda_4$ and $\lambda_5$ are $\aleph_1$.) Although we can make the construction in this case, now the forcings $\Gor_{\Bbf_\delta}$ have size $\lambda_3=\aleph_1$, so they could destroy $\LCU_3(\Por,\aleph_1)$. An alternative to deal with this problem is to perform a similar iteration with $\pi_0=0$ (so $\pi_1=0$, that is, no initial FS product of $\Gor_{\Bbf}$ is used) and guarantee $\LCU_{\Rbf^*}(\Por,\kappa)$ for any regular $\kappa\in[\aleph_1,\lambda_5]$ with the methods of this subsection (i.e.\ the methods from~\cite[\S1.3 \& \S1.4]{GKS}) adapted to $\Rbf^*$, where $\Rbf^*$ is the Blass-uniform relational system from~\cite{Ksplit} (see also~\cite[Example~2.19]{MejMod}) such that $\bfrak(\Rbf^*)=\max\{\bfrak,\sfrak\}$ and $\dfrak(\Rbf^*)=\min\{\dfrak,\rfrak\}$.
\end{remark}

\subsection{The other constellations for the Knaster numbers}\label{ss:remaining}
So far we have assumed that
$\lambda_\mfrak>\aleph_1$ and that
$k_0<\omega$. We now remark on how to prove the
other cases:\smallskip

\emph{Case $\lambda_\mfrak=\aleph_1$.} We only change $I_0:=\{\plabel\}\cup\{i\in[1,4]:\lambda_i\leq\lambda\}$
   (so (I2)(i) is excluded in the construction). Check details in~\cite[Lemma~4.7]{GKMS1}. Note that here the value of $k_0$ is irrelevant.\smallskip

\emph{Case $k_0=\omega$ and $\lambda_\mfrak>\aleph_1$.} Force with $P_{\mathrm{cal},\lambda_\mfrak}\ast\Por$ where $P_{\mathrm{cal},\lambda_\mfrak}$ is the precaliber $\aleph_1$
   poset from~\cite[\S5]{GKMS1} and $\Por$ is the
   forcing resulting from the construction above (in the $P_{\mathrm{cal},\lambda_\mfrak}$-extension).\footnote{For $i=\mlabel$, recall that ``$\omega$-Knaster" abbreviates ``precaliber $\aleph_1$".}


\subsection{The alternative order of the left side}

The construction of~\cite[\S2]{KeShTa:1131} for the alternative order of the left side of Cicho\'n's diagram can also be adapted in the situation of the previous theorems. This is just interchanging the order of the values of $\bfrak$ and $\covN$, that is, instead of forcing $\covN=\lambda_2\leq\bfrak=\lambda_3$, we force $\bfrak=\lambda_3<\covN=\lambda_2$. See also \cite{modKST} for the weakening of the hypothesis GCH:

\begin{theorem}\label{altmainstepI}
    Theorem~\ref{thm:step3} (and Goal~\ref{mainstepI}) is still valid when, in Assumption~\ref{asm:bla}, we replace $\lambda_1\leq\lambda_2\leq\lambda_3<\lambda_4$ by $\lambda_1\leq\lambda_3<\lambda_2<\lambda_4$.\footnote{As in~\cite[\S2]{KeShTa:1131}, the relational system $\Rbf^\LCU_2$ corresponding to this result is not the same as the one for Theorem~\ref{thm:step3}. Although this is a relational system of the reals, it is not Blass-uniform.}
\end{theorem}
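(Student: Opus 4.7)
The plan is to essentially mirror the proof of Theorem~\ref{thm:step3}, performing the same three-step construction (basic forcing, dealing with the ``hard'' characteristic, then removing the extra cardinal arithmetic assumption), but with the roles of $\bfrak$ and $\covN$ interchanged, following the template of~\cite[\S2]{KeShTa:1131} adapted with the relaxed hypotheses of~\cite{modKST}. Concretely, one keeps the indexing scheme with $I_1=\{i^*,\dots,4\}$ and the partition $\Sigma_i,\Pi_i$, but now $i=2$ lies in $I_1$ whenever $\lambda_2>\lambda$, so the ``large'' iterands at the type-$2$ steps become the ones that potentially threaten $\LCU_2$ with respect to the new Blass-uniform-like relational system $\Rbf^{\LCU}_2$ from~\cite[\S2]{KeShTa:1131}. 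The iterand at $\zeta\in\Sigma_2$ is now a restricted version of random forcing (rather than a subalgebra of the full measure algebra appearing as a type-$2$ iterand in the previous proof), constructed as $\Bor^{V^{\Por^*{\restriction}C_\zeta}}$ for the appropriate closed parameter $C_\zeta$. Meanwhile, type-$3$ iterands are treated via Hechler forcing on small posets and are $\sigma$-centered, so they are $(\Rbf^{\LCU}_2,\aleph_1)$-good and pose no threat to $\LCU_2$.

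The analog of Lemma~\ref{lem:step1} goes through verbatim for $\LCU_i$ with $i\in\{1,4,\textrm{sp}\}$ and for all $\COB_i$: the splitting/reaping $\COB$ argument via Mathias--Prikry forcing on small filter bases depends only on the indexing in $\Pi_{\plabel}$; the $\COB_i$ arguments for $i\in I_1$ depend only on book-keeping via the closed parameters $C_\zeta$. For $\LCU_1$ one checks that each iterand is $(\Rbf^{\LCU}_1,\lambda_1)$-good using exactly the same goodness lemma (the restricted random forcings are still subalgebras of the measure algebra, hence $(\Rbf^{\LCU}_1,\aleph_1)$-good). The item on $\mfrak_k$ is untouched, since the $k_0$-Knaster products in $\Pi_\mlabel$ do not interact with the order of the left-hand side.

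The main obstacle, as in the original proof, is simultaneously controlling two ``large'' characteristics: here one needs both $\LCU_2(\Por,\kappa)$ for regular $\kappa\in[\lambda_2,\lambda_5]$ and $\LCU_3(\Por,\kappa)$ for regular $\kappa\in[\lambda_3,\lambda_5]$. For $\LCU_3$, since now $\lambda_3<\lambda_2$, the type-$3$ iterands are small (${<}\lambda_3$) and $\sigma$-centered so they are harmless; the threat to $\LCU_3$ comes exclusively from the type-$2$ and type-$4$ iterands. One adapts the ultrafilter-limit machinery of~\cite[\S1.3]{GKS} (in its form used in~\cite[\S2]{KeShTa:1131} for random forcing restricted by elementary submodels) to choose $C_\zeta$ for $\zeta\in\Sigma_2\cup\Sigma_4$ in a $\lambda$-club of $[\zeta^+]^{<\lambda_i}$ so that appropriate ultrafilter-limits of conditions remain in $\Qnm_\zeta=\Sor_i^{V^{\Por^*{\restriction}C_\zeta}}$; this yields the analog of Lemma~\ref{lem:step2} under the extra assumption $2^\chi\geq\lambda_5$ (or $2^{\lambda_3}\geq \lambda_5$ as in Remark~\ref{rem:limitb}).

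Finally, to remove the cardinal arithmetic assumption and obtain the full analog of Theorem~\ref{thm:step3}, one repeats the argument of its proof verbatim: force with the ${<}\chi$-closed, $\chi^+$-cc poset $R$ of partial functions $\chi\times\lambda_5\to 2$, carry out the construction of the previous paragraph inside $V^R$ choosing at each type-$2$ or type-$4$ step a suitable closed parameter $C_\zeta^0$, reflect it down to a ground-model parameter $\tilde C_\zeta^0$ using $\lambda$-closure of the club of suitable parameters, iterate $\lambda$ many times taking unions at limits, and apply the absoluteness argument~\cite[Lemma~1.33]{GKS} to transfer $\LCU_2(\Por',\kappa)$ and $\LCU_3(\Por',\kappa)$ back to $V$. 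The closedness of the selected parameters under $\lambda$-unions together with Lemma~\ref{smallrestr}(b) ensures $\Por^*{\restriction}C_\zeta\lessdot\Por^*_\zeta$ and hence preservation of symmetry and the full S$\lambda$s framework, so the splitting/reaping conclusion from Theorem~\ref{PresSplit} and all the $\COB$'s carry over, completing the proof of Goal~\ref{mainstepI} in this alternative order.
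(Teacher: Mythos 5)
The proposal follows essentially the same route as the paper, which itself gives only a one-sentence proof: adapt the construction of~\cite[\S2]{KeShTa:1131} (see also~\cite{modKST}), swap the roles of $\bfrak$ and $\covN$, and run the three-step scheme of Lemma~\ref{lem:step1}, Lemma~\ref{lem:step2}, and Theorem~\ref{thm:step3}. Your fleshing-out of the book-keeping (placing $2$ in $I_1$ since $\lambda_2>\lambda$ always holds in the alternative order, treating the type-$2$ iterands as $\Sigma$-type restricted random forcings $\Bor^{V^{\Por^*{\restriction}C_\zeta}}$, keeping type-$3$ as small $\sigma$-centered Hechler products in $\Pi_3$) and your treatment of items (a)--(d) of Goal~\ref{mainstepI} are all correct and faithful to what the citations would give.

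One justification is misstated, though. You write that ``the threat to $\LCU_3$ comes exclusively from the type-$2$ and type-$4$ iterands.'' Random forcing is $\omega^\omega$-bounding, so the restricted random iterands at $\Sigma_2$ pose no threat to $\LCU_3$ regardless of their size; only the restricted eventually-different iterands at $\Sigma_4$ can add dominating reals over the Cohen witnesses. The reason ultrafilter-type limits (the ``closed'' variant of~\cite[\S2]{KeShTa:1131}) are nonetheless applied at the $\Sigma_2$ stages is to preserve $\LCU_2$ --- this is precisely why the relational system $\Rbf^\LCU_2$ has to be replaced by the non-Blass-uniform one mentioned in the footnote, as the standard goodness argument for $\covN$ does not directly interact well with the limit machinery once $\lambda_2>\lambda_3$. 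Relatedly, you describe this replacement as ``Blass-uniform-like,'' but the footnote says explicitly that it is \emph{not} Blass-uniform; it is merely a relational system of the reals. These are presentation-level imprecisions rather than gaps: your overall conclusion --- apply the $\lambda$-club of suitable parameters from Lemma~\ref{smallrestr}(b) at both $\Sigma_2$ and $\Sigma_4$, build in $V^R$, reflect down, and invoke the absoluteness lemma --- is what the paper's reference unpacks to.
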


Remark~\ref{rem:limitb} also applies in this situation.

\section{15 values}\label{sec:15}

In this section, we review some tools from~\cite{GKMS2,GKMS1} and show how they are used to control the cardinal characteristics other than $\sfrak$.  We describe the forcing constructions but we omit the details in the proofs, since these are exactly as in the cited references.

We use the notions of \emph{\mlike\ cardinal characteristic} and \emph{\hlike\ characteristic} from~\cite[\S3]{GKMS1}. We do not need to recall their definition, but we only need some of their properties and to know that the cardinals $\mfrak_k$ ($1\leq k\leq \omega$) are \mlike, $\hfrak$ and $\gfrak$ are \hlike, and $\pfrak$ and $\tfrak$ are of both types.

\begin{lemma}[{\cite[Cor.~3.5]{GKMS1}}]\label{lem:mlike}
   Let $\kappa$ be an uncountable regular cardinal, $\lambda$ a cardinal, $\xfrak$ a cardinal characteristic, and let $\Por$ be a $\kappa$-cc poset that forces $\xfrak=\lambda$ (so $\lambda$ is a cardinal in the $\Por$ extension). If $M\preceq \mathcal H_\chi$ (with $\chi$ a large enough regular cardinal) is ${<}\kappa$ closed and contains (as elements) $\Por,\kappa,\lambda$ and the parameters of the definition of $\xfrak$, then $\Por\cap M$ is a complete subposet of $\Por$ and:
   \begin{enumerate}[(i)]
       \item If $\xfrak$ is \mlike\ and $\lambda\geq\kappa$,
       then $\Por\cap M\Vdash \xfrak\geq\kappa$.
       \item If $\xfrak$ is \mlike\ and $\lambda<\kappa$,
       then $\Por\cap M\Vdash \xfrak=\lambda$.
       \item If $\xfrak$ is \hlike,
       then $\Por\cap M\Vdash\xfrak\leq|\lambda\cap M|$.
   \end{enumerate}
\end{lemma}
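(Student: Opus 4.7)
The plan is straightforward once one recalls the structural definitions of \mlike\ and \hlike\ characteristics from~\cite[\S3]{GKMS1}. In fact, the statement is essentially the cited corollary, so the proof will be a short recap that isolates the three ingredients that make the argument work: (a) $\Por\cap M$ is a complete subposet of $\Por$; (b) nice names for ``small'' objects in $V^{\Por\cap M}$ reflect into $M$; (c) the witnesses for \mlike- and \hlike-characteristics are preserved/reflected between $V^{\Por\cap M}$ and $V^\Por$ in the appropriate directions.

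First I would dispatch (a). Any maximal antichain $A\subseteq\Por\cap M$ has $|A|<\kappa$ because $\Por$ is $\kappa$-cc; since $M$ is ${<}\kappa$-closed and $A\subseteq M$, we have $A\in M$. By elementarity $M\models$``$A$ is a maximal antichain of $\Por$'', hence this is true in $V$. Thus $\Por\cap M\lessdot\Por$, and in particular $V^{\Por\cap M}\subseteq V^\Por$ and $\Por\cap M$ is itself $\kappa$-cc.

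Next, for (b) the key observation is: if $\dot Q$ is a $\Por\cap M$-name for an object of size ${<}\kappa$ (for instance a poset of size ${<}\kappa$ together with a ${<}\kappa$-sized family of dense subsets), then $\dot Q$ has an equivalent nice name determined by an antichain of size ${<}\kappa$, and by ${<}\kappa$-closure of $M$ such a nice name lies in $M$. Symmetrically, any witness in $V^\Por$ for a ``${<}\kappa$-sized'' statement is coded by a ${<}\kappa$-sized name, which by elementarity of $M$ can be chosen in $M$, and hence interpreted already in $V^{\Por\cap M}$.

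With these, the three clauses follow. For (i), $\mathfrak m$-likeness means that $\xfrak<\kappa$ in $V^{\Por\cap M}$ would be witnessed by a pair $(Q,\mathcal D)$ of size ${<}\kappa$; by (b) the witness lives in $V^\Por$, contradicting $\xfrak=\lambda\geq\kappa$ there. For (ii), the upper bound $\xfrak\leq\lambda$ follows by reflecting the $V^\Por$-witness into $M$ (it has size $\lambda<\kappa$, so (b) applies and $\lambda\subseteq M$); the lower bound $\xfrak\geq\lambda$ follows from the argument of (i) applied at any cardinal ${<}\lambda<\kappa$. For (iii), \hlike ness yields a $V^\Por$-witness consisting of $\lambda$-many dense/open sets $\{D_\alpha:\alpha<\lambda\}$ (with parameters in $M$) whose intersection is empty on reals; passing to $\{D_\alpha:\alpha\in\lambda\cap M\}$ one uses elementarity to see that any real in $V^{\Por\cap M}$ (hence in $V^\Por$) is missed by some $D_\alpha$ indexed in $\lambda\cap M$, giving $\xfrak\leq|\lambda\cap M|$ in $V^{\Por\cap M}$.

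The main obstacle I expect is the careful checking that the reflection in clause (iii) works: one must verify that the specific form of an \hlike\ witness (as spelled out in~\cite[\S3]{GKMS1}) is absolute in the required direction, and that ``indexing in $M$'' really suffices to catch every $V^{\Por\cap M}$-real. Since the lemma is word-for-word \cite[Cor.~3.5]{GKMS1}, however, the cleanest presentation is simply to cite that result and sketch the three bullets above.
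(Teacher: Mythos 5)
The paper gives no proof of this lemma at all: it is stated and attributed verbatim to \cite[Cor.~3.5]{GKMS1}, and that is what the current paper's ``proof'' consists of. You correctly identify this in your last sentence (``cite that result and sketch the three bullets above''), and your verification that $\Por\cap M\lessdot\Por$ is the standard argument and is fine.

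However, there is a genuine gap in how clause (i) is argued in your sketch. You write that a pair $(Q,\mathcal D)$ witnessing $\xfrak<\kappa$ in $V^{\Por\cap M}$ ``by (b)~lives in $V^\Por$, contradicting $\xfrak\geq\kappa$ there.'' The object $(Q,\mathcal D)$ is indeed in $V^\Por$ because $V^{\Por\cap M}\subseteq V^\Por$, but the \emph{property} of being a witness --- that no filter in $Q$ meets all the dense sets in $\mathcal D$ --- is not upward absolute: passing to the richer model $V^\Por$ can introduce new filters, so $(Q,\mathcal D)$ may fail to be a witness there, and no contradiction is obtained. The argument has to run in the opposite direction, i.e.\ the direction your own observation (b) actually supports: given $(Q,\mathcal D)\in V^{\Por\cap M}$ with $|Q|,|\mathcal D|<\kappa$, a ${<}\kappa$-sized nice $\Por\cap M$-name for it lies in $M$; since $V^\Por\models\xfrak\geq\kappa$ there is a filter $G\subseteq Q$ in $V^\Por$ meeting $\mathcal D$; $G$ is a ${<}\kappa$-sized subset of $Q$, and by $\kappa$-cc and elementarity one can choose a ${<}\kappa$-sized nice $\Por$-name for such a $G$ inside $M$, hence a $\Por\cap M$-name, whose interpretation is a filter meeting $\mathcal D$ already in $V^{\Por\cap M}$. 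Clause (iii) has a similar issue --- it needs a reflection of the ``escaping index'' $\alpha\in\lambda\cap M$ for each real, via its ${<}\kappa$-sized name in $M$, rather than a direct transfer of the witness family --- but the clause (i) confusion is the clearest place where the sketch as written would not close.
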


\begin{lemma}[{\cite[Lemma~6.3]{GKMS1}}]\label{lem:gfrak}
Assume:
\begin{enumerate}[(1)]
\item $\kappa\le\nu$ are uncountable regular
cardinals,
$\Por$ is a $\kappa$-cc poset.

\item $\mu=\mu^{<\kappa}\geq\nu$ and $\Por$ forces $\cfrak>\mu$.

\item For some relational systems of the reals $\Rbf^1_i$ ($i\in I_1$) and some regular $\lambda^1_i\leq\mu$:
$\Por$ forces $\LCU_{\Rbf^1_i}(\lambda^1_i)$

\item For some relational systems of the reals $\Rbf^2_i$ ($i\in I_2$), and some directed order $S^2_i$ with $\bfrak(S^2_i)=\lambda^2_i\leq\mu$ and $|S^2_i|\leq\vartheta^2_i\leq\mu$: $\Por$ forces  $\COB_{\Rbf^2_i}(S^2_i)$.

\item For some \mlike\ characteristics $\mathfrak y_j$ ($j\in J$)
and $\lambda_j<\kappa$:
$\Vdash_\Por \mathfrak y_j=\lambda_j$.

\item For some \mlike\ characteristics $\mathfrak y'_k$ ($k\in K$): $\Vdash_\Por \mathfrak{y}'_k\geq\kappa$.

\item $|I_1\cup I_2\cup J\cup K|\leq\mu$.
\end{enumerate}
Then there is a complete subforcing $\Por^*$ of $\Por$ of size $\mu$ forcing:
\begin{enumerate}[(a)]
    \item $\mathfrak y_j=\lambda_j$, $\mathfrak{y}'_k\geq\kappa$, $\LCU_{R^1_i}(\lambda^1_i)$ and $\COB_{R^2_{i'}}(\lambda^2_{i'},\vartheta^2_{i'})$ for all $i\in I_1$, $i'\in I_2$, $j\in J$ and $k\in K$;
    \item $\cfrak=\mu$ and $\gfrak\leq\nu$.
\end{enumerate}
\end{lemma}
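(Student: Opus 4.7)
The plan is to construct $\Por^{*}$ as $\Por\cap M$ for a carefully chosen elementary submodel $M$ of size $\mu$. Fix a sufficiently large regular cardinal $\chi$ so that $\Por$, $\kappa$, $\nu$, $\mu$, the witnesses of $\LCU_{\Rbf^1_i}$ and $\COB_{\Rbf^2_i}$, the directed orders $S^2_i$, the characteristics $\mathfrak y_j$, $\mathfrak y'_k$ and their defining parameters all belong to $\mathcal H_\chi$. Since $\mu^{<\kappa}=\mu$ and $|I_1\cup I_2\cup J\cup K|\leq\mu$, I will build an increasing continuous chain $\langle M_\alpha:\alpha<\nu\rangle$ of ${<}\kappa$-closed elementary submodels of $\mathcal H_\chi$, each of size $\mu$, containing all the relevant parameters, such that $M_\alpha\in M_{\alpha+1}$ and $\langle M_\beta:\beta\leq\alpha\rangle\in M_{\alpha+1}$. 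Set $M:=\bigcup_{\alpha<\nu}M_\alpha$; then $|M|=\mu$, $M$ is ${<}\kappa$-closed (using $\nu\geq\kappa$ and $\mu^{<\kappa}=\mu$), and $\Por^{*}:=\Por\cap M\lessdot\Por$ by the standard argument for $\kappa$-cc forcings with ${<}\kappa$-closed submodels.

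Next I would verify (a). For the \mlike characteristics $\mathfrak y_j$ and $\mathfrak y'_k$, apply Lemma~\ref{lem:mlike} directly inside $M$: the assumptions $\lambda_j<\kappa$ and $\mathfrak y'_k\geq\kappa$ are preserved to $\Por^{*}$. For $\LCU_{\Rbf^1_i}(\lambda^1_i)$, a witnessing sequence $\langle\dot x^i_\xi:\xi<\lambda^1_i\rangle$ of $\Por$-names lies in $\mathcal H_\chi$; since $\lambda^1_i\leq\mu$ and the names can be taken to be nice with countably many conditions, elementarity allows us to arrange (by closure of $M$ under the relevant nice-name coding) that a cofinal subsequence of witnesses lies in $M$, and the $\LCU$ property transfers to $\Por^{*}$ because any $\Por^{*}$-name for a real is in particular a $\Por$-name. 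For $\COB_{\Rbf^2_i}(S^2_i)$, since $|S^2_i|\leq\vartheta^2_i\leq\mu$, the entire witness family $\{\dot y^i_s:s\in S^2_i\}$ and the order $S^2_i$ can be put into $M$, and $\COB$ relative to $\Por^{*}$ follows because every $\Por^{*}$-name is a $\Por$-name, so the bound given by $\Por$ still works.

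For $\cfrak=\mu$ in the $\Por^{*}$-extension, the upper bound follows from $|\Por^{*}|\leq\mu$, and the lower bound follows from the preserved $\LCU$ (or $\COB$) property at any regular $\lambda^1_i$ or $\lambda^2_i$ together with the fact that $\mathfrak y'_k\geq\kappa$ is forced, once we combine these with the assumed containment $\mu\geq\nu\geq\kappa$ and exploit that $\Por$ itself forces $\cfrak>\mu$ so no name for an enumeration of the reals of $\Por$-generic reals fits into $\Por^{*}$; more concretely, if $\kappa\leq\nu\leq\mu$ then the chain $\langle M_\alpha:\alpha<\nu\rangle$ yields a strictly increasing sequence of continuum-sized fragments whose union has size $\mu$, so $\cfrak\geq\nu$, and combined with the preserved $\LCU$ at $\mu$ (or the fact that one of the $\lambda^1_i$ equals $\mu$, which in our applications is the case) we get equality.

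The main obstacle, as in~\cite[Lemma~6.3]{GKMS1}, is bounding $\gfrak$ by $\nu$. For this, for each $\alpha<\nu$ let $N_\alpha:=M_\alpha\cap[\omega]^{\aleph_0}$ (or rather, a suitable collection of reals definable from $M_\alpha$), and let $D_\alpha$ be the groupwise dense set (in the $\Por^{*}$-extension) consisting of those infinite $a\subseteq\omega$ that ``come from above $M_\alpha$'' in the sense that for every interval partition $\langle I_n:n<\omega\rangle\in M_\alpha$, $a$ almost-contains $\bigcup_{n\in b}I_n$ for some $b$ not captured by $M_\alpha$. A standard argument, exploiting the $\kappa$-closure of each $M_\alpha$ and the fact that $\Por^{*}=\bigcup_{\alpha<\nu}(\Por\cap M_\alpha)$ adds reals ``cofinally'' along the chain, shows that each $D_\alpha$ is groupwise dense in the $\Por^{*}$-extension while $\bigcap_{\alpha<\nu}D_\alpha=\emptyset$: any candidate pseudo-intersection would be added by some $\Por\cap M_\alpha$ and thus be excluded from $D_{\alpha+1}$. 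This gives the desired bound $\gfrak\leq\nu$, completing the proof.
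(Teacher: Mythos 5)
Your overall strategy — realize $\Por^{*}$ as $\Por\cap M$ where $M=\bigcup_{\alpha<\nu}M_\alpha$ is a $\nu$-chain of ${<}\kappa$-closed elementary submodels of size $\mu$ — is indeed the shape of the argument in~\cite[Lemma~6.3]{GKMS1}, and the preservation of the \mlike\ characteristics and of $\LCU$/$\COB$ via Lemma~\ref{lem:mlike} and elementarity is fine. But the two conclusions that carry all the actual weight in item (b) are left at the level of intentions, and the intentions as written do not go through.

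For $\gfrak\leq\nu$, the construction of the groupwise dense sets $D_\alpha$ is where the content lies, and yours is not well-posed: groupwise density requires that for \emph{every} interval partition some union $\bigcup_{n\in b}I_n$ lands in $D_\alpha$, whereas you define $D_\alpha$ as the set of $a$ such that \emph{for every} interval partition in $M_\alpha$ some condition holds, which is the wrong quantifier pattern, and the phrase ``$b$ not captured by $M_\alpha$'' has no precise meaning. More importantly, you never identify what reals have to be added cofinally along the chain to make such $D_\alpha$ groupwise dense; the fact that $\Por\cap M_{\alpha+1}\supsetneq\Por\cap M_\alpha$ as posets does not by itself give a real over $V^{\Por\cap M_\alpha}$ with the requisite property (e.g.\ a real $c$ with no $V^{\Por\cap M_\alpha}$-infinite set almost contained in $c$). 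You need to arrange, by elementarity and the hypothesis $\Vdash_\Por\cfrak>\mu$ together with $|\Por\cap M_\alpha|^{\aleph_0}\leq\mu$, that each $M_{\alpha+1}$ contains a nice name for such a fresh/Cohen-like real over $V^{\Por\cap M_\alpha}$, and then prove the groupwise density of a correctly defined $D_\alpha$ (e.g.\ the $a$'s that do not $\supseteq^*$ any $V^{\Por\cap M_\alpha}$-infinite set). Also, the empty-intersection argument should say each $a$ of the extension is a $\Por\cap M_\gamma$-real for some $\gamma$ and hence excluded from $D_\gamma$; appealing to a ``candidate pseudo-intersection'' mixes up $\pfrak$-terminology with $\gfrak$ and does not establish $\bigcap_\alpha D_\alpha=\emptyset$.

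The claim $\cfrak=\mu$ also has a gap: you say the lower bound ``follows from the chain of continuum-sized fragments'' or ``from $\lambda^1_i=\mu$, which in our applications is the case''. Neither is an argument inside the lemma's hypotheses. The correct route is to use $\Vdash_\Por\cfrak>\mu$ and elementarity to place into $M$ a set of $\mu$ many $\Por$-names that $\Por$ forces to be pairwise distinct (which become $\Por^{*}$-names by $\kappa$-cc and ${<}\kappa$-closure of $M$), so that $\Por^{*}$ already adds $\mu$ distinct reals. Finally, a small but real technical point: you take a continuous chain of ${<}\kappa$-closed models, but at limit levels $\gamma<\nu$ of cofinality ${<}\kappa$ the union $\bigcup_{\beta<\gamma}M_\beta$ is not ${<}\kappa$-closed; you must either drop strict continuity at those levels (taking suitable ${<}\kappa$-hulls) or only require $M_\gamma\supseteq\bigcup_{\beta<\gamma}M_\beta$ and argue the union over $\nu$ is still ${<}\kappa$-closed by regularity of $\nu\ge\kappa$.
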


We are now ready to prove the main result of this paper. We use Notation~\ref{not:RLCU} and the following assumption for all the results in this section.

\begin{assumption}\label{hyp:11}
   \
   \begin{enumerate}[(1)]
       \item $\mu_\mfrak\leq\mu_\pfrak\leq\mu_{0}\leq\mu_1\leq\mu_2\leq\ldots\leq\mu_8$ are uncountable regular.
       \item $\mu_9\geq\mu_8$ is a cardinal such that 
       $\mu_9^{{<}\mu_{0}}=\mu_9$.

       \item\label{newcard} $0\leq i_0\leq2$, $\mu_\mathrm{sp}\in[\mu_{i_0},\mu_{i_0+1}]$ and $\mu_\rfrak\in[\mu_{8-i_0},\mu_{9-i_0}]$ are regular.

       \item\label{11card}
       There are eleven regular cardinals $\theta_0>\cdots> \theta_{10}>\mu_9$ such that $\theta_i^{{<}\theta_i}=\theta_i$ for any $i<11$, $\theta_i$ is $\aleph_1$-inaccessible for $i\in\{1,3,5,7\}$, $\theta_3=\chi_3^+$ and $\chi_3=\chi_3^{<\chi_3}$.\footnote{%
       %
       We could further weaken the assumption depending on the value $i_0$. E.g.,
       in  case $i_0=1$, $\theta_i$ is required   $\aleph_1$-inaccessible only for $i\in\{1,3,5\}$. Also, it is enough that $\theta_0^{{<}\theta_1}=\theta_0$ (here, $\theta_0$ could be singular), and $\theta_3$ is not needed successor according to Remark~\ref{rem:limitb}. For more pedantic weakenings, see~\cite[Rem.~3.5]{GKMS2}.%
       %
       }

   \end{enumerate}
\end{assumption}

Note that, under GCH,
assumption (\ref{11card}) is irrelevant, and
$\mu_9^{{<}\mu_{0}}=\mu_9$ is equivalent to $\cof(\mu_9)\geq\mu_{0}$.

The Main Theorem for Figure~\ref{fig:cichonorders}(A) is proved in two steps through the following two results.

\begin{theorem}\label{mainstep2}
   Under Assumption~\ref{hyp:11}, for any $k_0\in[2,\omega]$ there is a ccc poset $\Por^1$ such that, for any $i\in\{1,2,3,4,\mathrm{sp}\}$,
    \begin{enumerate}[(a)]
        \item $\LCU_i(\Por^1,\theta)$ holds for $\theta\in\{\mu_i,\mu_{9-i}\}$, where $\mu_{\mathrm{sp}}:=\mu_\sfrak$ and $\mu_{9-\mathrm{sp}}:=\mu_\rfrak$.
        \item There is some directed $S_i$ with $\cp(S_i)=\mu_i$ and $\cf(S_i)=\mu_{9-i}$ such that $\COB_i(\Por^1,S_i)$ holds.
        \item $\Por^1$ forces $\pfrak=\gfrak=\mu_0$ and $\cfrak=\mu_9$.
        \item $\Por^1$ forces $\mfrak_k=\aleph_1$ for any $k\in[1,k_0)$, and $\mfrak_k=\mu_\mfrak$ for any $k\in[k_0,\omega]$.
    \end{enumerate}
\end{theorem}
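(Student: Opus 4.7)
The plan is the standard two-step construction from~\cite{GKMS1,GKMS2}: build a large ccc forcing $\Por$ of size $\theta_0$ realising Goal~\ref{mainstepI}, then carve out a complete subforcing $\Por^1\lessdot\Por$ of size $\mu_9$ via Lemma~\ref{lem:gfrak}. The only novelty compared with the analogous result in~\cite{GKMS1} is that $\sfrak$ and $\rfrak$ now figure in the list of preserved relational systems; they are handled exactly like the other Blass-uniform characteristics.

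First I would invoke Theorem~\ref{thm:step3} with the parameter assignment
$(\lambda_\mfrak,\lambda_{\mathrm{sp}},\lambda_1,\lambda_2,\lambda_3,\lambda_4,\lambda_5):=(\mu_\mfrak,\mu_{\mathrm{sp}},\theta_7,\theta_5,\theta_3,\theta_1,\theta_0)$ and $\chi:=\chi_3$. Assumption~\ref{hyp:11}(4) supplies precisely what Assumption~\ref{asm:bla} requires, so the resulting ccc forcing $\Por$ of size $\theta_0$ forces $\pfrak=\sfrak=\mu_{\mathrm{sp}}$, $\cfrak=\theta_0$, $\mfrak_k=\aleph_1$ for $k<k_0$ and $\mfrak_k=\mu_\mfrak$ for $k\geq k_0$, and it satisfies $\LCU_i(\Por,\theta)$ for all regular $\theta\in[\lambda_i,\theta_0]$ as well as $\COB_i(\Por,\tilde S_i)$ with $\cp(\tilde S_i)=\lambda_i$ and $|\tilde S_i|=\theta_0$, for each $i\in\{1,2,3,4,\mathrm{sp}\}$.

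Second I would apply Lemma~\ref{lem:gfrak} to $\Por$ with $\kappa:=\nu:=\mu_0$ and $\mu:=\mu_9$. The hypotheses hold since $\Por$ is ccc, $\mu_9^{<\mu_0}=\mu_9$, and $\Por\Vdash\cfrak=\theta_0>\mu_9$. The input families are fed in as follows. For the $\LCU$-list I take $\Rbf^\LCU_j$ for each $j\in\{1,2,3,4,\mathrm{sp}\}$ paired twice, with the bounds $\mu_j$ and $\mu_{9-j}$; both bounds lie in the preservation range $[\lambda_j,\theta_0]$. For the $\COB$-list I take $\Rbf^\COB_j$ for the same five indices, together with a ${<}\mu_j$-directed cofinal subsystem $S_j\subseteq\tilde S_j$ of size and cofinality $\mu_{9-j}$, extracted from the explicit description of $\tilde S_j$ given in the proof of Lemma~\ref{lem:step1} (the ground-model index families of the form $[\theta_0^+]^{<\lambda_j}$ and those based on $\lambda_j$-elementary sets, or on filter bases of size ${<}\mu_{\mathrm{sp}}$, all admit such subsystems). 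For the $\mathfrak{y}_j$-list (values below $\kappa=\mu_0$) I include each $\mfrak_k$ with its respective value $\aleph_1$ or $\mu_\mfrak$, both smaller than $\mu_0$. For the $\mathfrak{y}'_k$-list (values at least $\kappa$) I include $\pfrak$, an $\mlike$ characteristic forced by $\Por$ to equal $\mu_{\mathrm{sp}}\geq\mu_0$.

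The resulting $\Por^1=\Por^*\lessdot\Por$ of size $\mu_9$ satisfies (a) and (b) by clause (a) of Lemma~\ref{lem:gfrak}; clause (b) of that lemma together with the preserved inequality $\pfrak\geq\mu_0$ and the ZFC bound $\pfrak\leq\gfrak$ yields $\pfrak=\gfrak=\mu_0$ and $\cfrak=\mu_9$, i.e.\ item (c); item (d) is immediate from the preservation of the $\mfrak_k$. The one point requiring care is the extraction of the $S_j$ with $\cf(S_j)=\mu_{9-j}$ exactly (rather than simply ${\leq}\mu_9$), but this follows from the bookkeeping structure of Section~\ref{sec:left}, since each $\tilde S_j$ is explicitly presented as a ${<}\lambda_j$-directed union along $[\theta_0^+]^{<\lambda_j}$ and hence has cofinal subsystems of any prescribed regular cofinality in the interval $[\mu_j,\theta_0]$; this is the only place where the full strength of the eleven cardinals $\theta_0,\dots,\theta_{10}$ of Assumption~\ref{hyp:11}(4) is needed.
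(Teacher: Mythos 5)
Your proposal has a genuine gap: you have omitted the ten-level elementary-submodel intersection step that is the heart of the paper's proof (and the central technical tool inherited from~\cite[Thm.~3.1]{GKMS2}). The paper proceeds in \emph{three} stages, not two. After obtaining $\Por^0$ from Theorem~\ref{thm:step3}, it builds a tower of submodels $M_{n,\alpha}\preceq\mathcal H_\chi$ ($1\le n\le 10$, $\alpha<\vartheta_n$, each ${<}\theta_n$-closed of size $\theta_n$), forms $M^+:=\bigcap_{n=1}^{10}\bigcup_{\alpha<\vartheta_n}M_{n,\alpha}$, and shows that $\Por':=\Por^0\cap M^+$ already satisfies items (a), (b), and (d) with $\cfrak=\theta_{10}$. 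Only \emph{after} that does it invoke Lemma~\ref{lem:gfrak} (with $\kappa=\nu=\mu_0$, $\mu=\vartheta_{11}=\mu_9$) to produce $\Por^1\lessdot\Por'$ and secure item (c), i.e.\ $\gfrak\le\mu_0$ and $\cfrak=\mu_9$.

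Your plan of jumping straight from $\Por^0$ to Lemma~\ref{lem:gfrak} cannot work, for two concrete reasons. First, with your parameter assignment ($\lambda_1=\theta_7$, $\lambda_2=\theta_5$, etc.), Theorem~\ref{thm:step3} gives $\LCU_i(\Por^0,\theta)$ only for regular $\theta\in[\lambda_i,\theta_0]$; since $\lambda_i\ge\theta_7>\theta_{10}>\mu_9\ge\mu_{9-i}\ge\mu_i$ for $i\in\{1,2,3,4\}$, your claim that $\mu_i$ and $\mu_{9-i}$ ``both lie in the preservation range $[\lambda_i,\theta_0]$'' is simply false, and Lemma~\ref{lem:gfrak} only preserves $\LCU$ at the parameters already forced, so it produces nothing below $\theta_7$. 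Second, your proposal to extract from $\tilde S_j$ a ``$<\mu_j$-directed cofinal subsystem $S_j$ of size and cofinality $\mu_{9-j}$'' is impossible: any cofinal subset of a directed order has the same cofinality as the whole order, and $\cf(\tilde S_j)=\theta_0\gg\mu_{9-j}$. The elementary-submodel tower is precisely what replaces each $\theta_n$ by $\vartheta_n=|\theta_n\cap M^+|$ in a coherent way across all ten cardinals simultaneously (changing $\bfrak$ and $\dfrak$ of the bookkeeping orders, not merely restricting them), and there is no shortcut through Lemma~\ref{lem:gfrak} that achieves this.
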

\begin{proof}
    We deal with the case $i_0=1$, that is, $\mu_1\leq\mu_\mathrm{sp}\leq\mu_2$ (any other case is similar). We rewrite the sequence
    \[
    \xymatrix@=0.1ex{
         \mu_1&\leq&\mu_\mathrm{sp}&\leq&\mu_2&\leq&
         \mu_3&\leq&\mu_4&\leq&\mu_5&\leq&
         \mu_6&\leq&\mu_{\rfrak}&\leq&\mu_7&\leq&\mu_8&\leq&\mu_9&\text{as}  \\
         \vartheta_{10}&\leq&\vartheta_8&\leq&\vartheta_6&\leq&\vartheta_4&\leq&\vartheta_2&\leq&\vartheta_1&\leq&\vartheta_3&\leq&\vartheta_5&\leq&\vartheta_7&\leq&\vartheta_9&\leq&\vartheta_{11},
    }
    \]
    and let $\la\theta_j:j<11\ra$ be cardinals as in Assumption~\ref{hyp:11}(\ref{11card}) ordered by
    \[\vartheta_{11}<\theta_{10}<\theta_9<\cdots<\theta_0\]
    as shown in Figure~\ref{fig:setup2}.

    \begin{figure}
\resizebox{\textwidth}{!}{$
\xymatrix@=5ex{
&
&
&\txt{$\cfraki{=}\theta_0$}
\\
\txt{$\covNi{=}\theta_5$}
\ar@{.}[r]\ar[dr]_-{\textstyle\theta_4}
&\txt{$\nonMi{=}\theta_1 $}
\ar@{.}[r]\ar[rru]
&\mye
\ar@{=}@[Gray][r]
&\mye
\ar@{=}@[Gray][u]
\\
\txt{$\pfrak^{\mathrm{pre}}{=}\sfrak^{\mathrm{pre}}{=}\theta_7$}
\ar[u]^-{\textstyle\theta_6}
&\txt{$\bfraki{=}\theta_3$}
\ar@{.}[r]\ar[u]_-{\textstyle\theta_2}
&\mye
\ar@{=}@[Gray][u]
\\
\txt{$\addNi{=}\theta_9$}
\ar@{.}[r]\ar[u]^-{\textstyle\theta_8}
&\mye
\ar@{.}[r]\ar@{=}@[Gray][u]
&\mye
\ar@{=}@[Gray][r]\ar@{=}@[Gray][u]
&\mye
\ar@{=}@[Gray][uu]
\\
&
&
&\txt{$\cfrak{=}\vartheta_{11}$}
\ar[ulll]^-{\textstyle\theta_{10}}
\\
\txt{$\covN{=}\vartheta_6$}
\ar@{.}[r]\ar[rd]
&\txt{$\nonM{=}\vartheta_2$}
\ar@{.}[r]\ar[ddr]
&\mye
\ar@{.}[r]
&\txt{$\cofN{=}\vartheta_{9}$}
\ar[u]
\\
\txt{$\sfrak{=}\vartheta_8$}
\ar[u]
&\txt{$\bfrak{=}\vartheta_4$}
\ar@{.}[r]\ar[u]
&\txt{$\dfrak{=}\vartheta_3$}
\ar@{=}@[Gray][u]\ar[dr]
&\txt{$\rfrak{=}\vartheta_7$}
\ar[u]
\\
\txt{$\addN{=}\vartheta_{10}$}
\ar@{.}[r]\ar[u]
&\mye\ar@{.}[r]
\ar@{=}@[Gray][u]
&\txt{$\covM{=}\vartheta_1$}
\ar@{.}[r]\ar[u]
&\txt{$\nonN{=}\vartheta_5$}
\ar[u]
\\
\txt{$\pfrak{=}\gfrak{=}\mu_0$}
\ar[u]
&\txt{$\mfrak_{k_0}{=}\mu_{\mfrak}$}
\ar[l]
&\aleph_1
\ar[l]
}
$}
\caption{The cardinals $\vartheta_n$ and $\theta_n$ are increasing along the arrows.
The upper diagram shows the situation forced by $\Por^0$,
and the lower diagram shows the one forced by $\Por^1$. ($\sfrak$ can be anywhere between $\pfrak$ and $\bfrak$.)}\label{fig:setup2}
\end{figure}
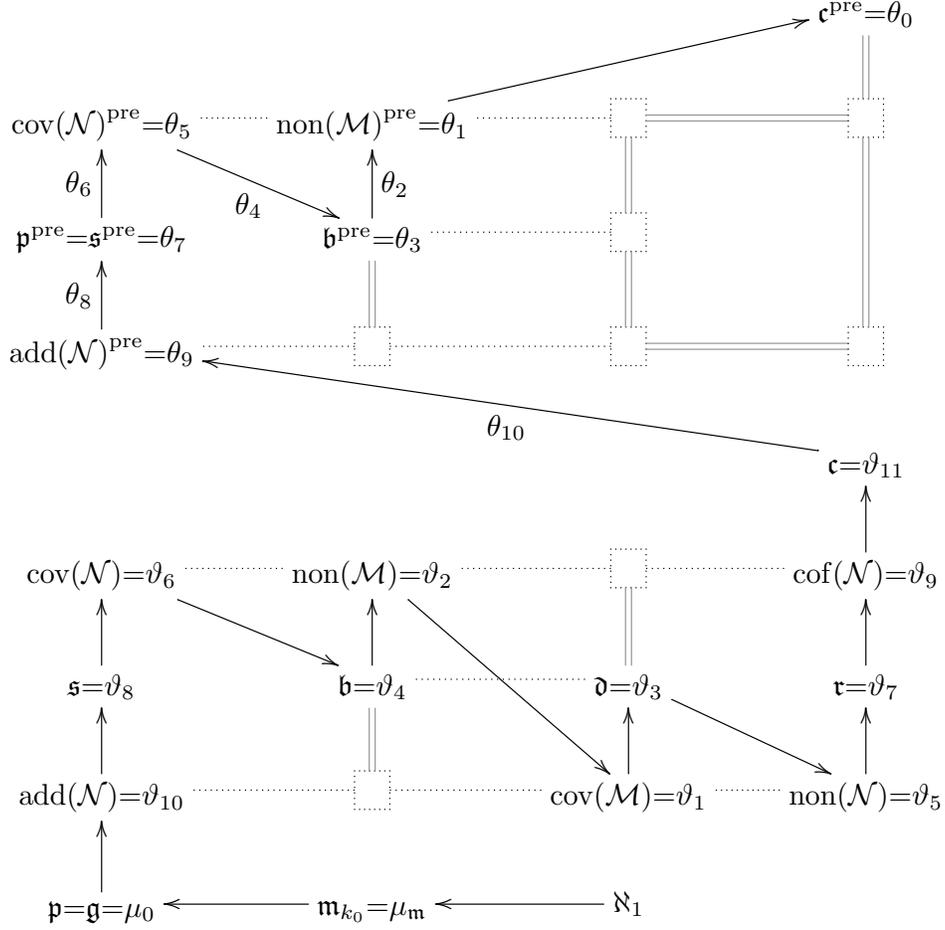

    Let $\Por^0$ be the ccc poset obtained by application of Theorem~\ref{thm:step3} to $\lambda_\mfrak=\mu_\mfrak$, $\lambda_1=\theta_9$, $\lambda_{\mathrm{sp}}=\theta_7$, $\lambda_2=\theta_5$, $\lambda_3=\theta_3$, $\lambda_4=\theta_1$ and $\lambda_5=\theta_0$. In particular, this forces the top diagram of Figure~\ref{fig:setup2} and item (d). We show how to construct a complete subforcing of $\Por^0$ that satisfies the statement of the theorem, in particular, it forces the bottom diagram of Figure~\ref{fig:setup2}.

    For $1\leq n\leq 10$ and $\alpha<\vartheta_n$ define $M_{n,\alpha}$ fulfilling:
    \begin{itemize}
        \item $M_{n,\alpha}\preceq \mathcal H_\chi$ (for a fixed large enough regular $\chi$) and it contains (as elements) the sequences of $\theta$'s and $\vartheta$'s, $\Por^0$ and the directed sets associated with the $\COB$ properties forced by $\Por^0$.
        \item The sequences $\la M_{m,\xi} : \xi<\vartheta_m \ra$ for $1\leq m<n$ and $\la M_{n,\xi}:\xi<\alpha\ra$ belong to $M_{n,\alpha}$.
        \item $M_{n,\alpha}$ is ${<}\theta_n$ closed of size $\theta_n$.
    \end{itemize}
    Set $M_n:=\bigcup_{\alpha<\vartheta_n}M_{n,\alpha}$ and $M^+:=\bigcap_{n=1}^{10} M_n$. Exactly as in the proof of~\cite[Thm.~3.1]{GKMS2} one can show that  $M^+\preceq \mathcal H_\chi$, $M^+$ is ${<}\vartheta_{10}$-closed, and $\Por':=\Por^0\cap M^+$ is a ccc poset that forces (a), (b) and $\cfrak=\theta_{10}$. Even more, $\Por'$ forces (d) and $\pfrak\geq\vartheta_{10}$ by Lemma~\ref{lem:mlike}.

    The desired poset is a complete subposet $\Por_1$ of $\Por'$ of size $\vartheta_{11}$ obtained by direct application of Lemma~\ref{lem:gfrak} (to $\kappa=\nu=\mu_0$ and $\mu=\vartheta_{11}$).
\end{proof}

\begin{theorem}\label{mainfinal}
   Under Assumption~\ref{hyp:11}, for any $k_0\in[2,\omega]$ there is a cofinality preserving poset $\Por$ such that, for any $i\in\{1,2,3,4,\mathrm{sp}\}$, it satisfies (a), (b) and (d), and $\Por$ forces $\pfrak=\mu_\pfrak$, $\hfrak=\gfrak=\mu_0$ and $\cfrak=\mu_9$.
\end{theorem}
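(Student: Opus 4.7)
The plan is to follow the strategy of Theorem~\ref{GKMSmain} and apply the $\pfrak$--$\hfrak$ separation machinery of~\cite[\S6]{GKMS1} to the construction $\Por^1$ of Theorem~\ref{mainstep2}. The forcing $\Por$ will be a two-step iteration $\Hor * \dot\Por^1$, where $\Hor$ is a $\sigma$-closed ``tower preparation'' whose conditions are $\subseteq^*$-decreasing sequences in $[\omega]^{\aleph_0}$ of length ${<}\mu_\pfrak$, ordered by end-extension. Its generic is a $\subseteq^*$-decreasing sequence $\la \dot x_\alpha : \alpha < \mu_\pfrak\ra$ without pseudo-intersection, so $\tfrak^{V^\Hor}\leq\mu_\pfrak$. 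Being $\sigma$-closed, $\Hor$ adds no reals and preserves cofinalities and the cardinal arithmetic of Assumption~\ref{hyp:11}. Inside $V^\Hor$, carry out the construction of Theorem~\ref{mainstep2} to obtain $\dot\Por^1$; the composite is cofinality preserving because the second factor is ccc.

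The computations for items (a), (b), (d) together with $\cfrak=\mu_9$ and $\hfrak=\gfrak=\mu_0$ are inherited from Theorem~\ref{mainstep2}: as $\Hor$ does not change $V^\omega$, the Cichoń characteristics, $\sfrak$, and $\rfrak$ are computed in $V^\Hor$ exactly as in $V$, and the elementary-submodel machinery of Lemma~\ref{lem:mlike} together with Lemma~\ref{lem:gfrak} (whose hypotheses involve only cardinal arithmetic and absolute relational systems of the reals) continues to pin down the \mlike\ and \hlike\ invariants, since groupwise density is not affected by $\Hor$. For the new equality $\pfrak=\mu_\pfrak$: the upper bound $\pfrak\leq\mu_\pfrak$ is given by the preparation tower provided $\Por^1$ preserves it, while $\pfrak\geq\mu_\pfrak$ follows from Bell's theorem, using that the $\Pi_{\plabel}$-part of $\Por^1$ handles all $\sigma$-centered posets of size ${<}\mu_\pfrak$. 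A subtle point is that the Mathias-Prikry factors witnessing $\COB_{\mathrm{sp}}$ are needed for filter bases of size ${<}\mu_\sfrak\geq\mu_\pfrak$; these are placed in a separate track and engineered so as not to contain a Mathias forcing on the tower filter, thereby keeping $\sfrak=\mu_\sfrak$ without destroying the tower.

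The principal obstacle is the preservation of $\la \dot x_\alpha : \alpha<\mu_\pfrak\ra$ as a tower through the ccc iteration $\Por^1$: one must verify that none of its iterands---the FS product $\Por_{\pi_1}$ of the $\Gor_{\Bbf_\delta}$'s, the Cohen factors, the restricted Suslin-ccc posets, the $\sigma$-centered factors, and the Mathias-Prikry factors---adds a pseudo-intersection. This is handled by a preservation theorem in the spirit of Section~\ref{sec:splpres} and~\cite[\S6]{GKMS1}: combining the symmetry of the iteration with an isomorphism-of-names argument and the smallness of each iterand relative to $\mu_\pfrak$, one shows that any purported pseudo-intersection must already be present before the tower $\la \dot x_\alpha\ra$ is defined, a contradiction. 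Adapting this tower-preservation argument to the symmetric Suslin-$\lambda$-small framework augmented with the Mathias-Prikry track is the technical core of the proof.
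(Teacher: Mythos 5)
Your proposal departs substantially from the paper's argument, and the departure introduces a fatal flaw. The paper's proof is a one-liner: set $\Qor:=\mu_\pfrak^{{<}\mu_\pfrak}$ ordered by end extension, take $\Por^1$ from Theorem~\ref{mainstep2}, and let $\Por:=\Por^1\times\Qor$, citing~\cite[Thm.~7.4]{GKMS1}. Crucially this is a \emph{product} of the ccc poset $\Por^1$ with a ${<}\mu_\pfrak$-closed poset, so no preservation theorem for a tower through the ccc iterands is ever invoked: Easton's lemma gives mutual genericity, $\Qor$ stays ${<}\mu_\pfrak$-distributive over $V^{\Por^1}$, the reals do not change, and the \mlike/\hlike{} machinery of~\cite{GKMS1} handles $\pfrak$, $\hfrak$ and $\gfrak$ at once. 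You instead form an \emph{iteration} $\Hor\ast\dot\Por^1$ with $\Hor$ a tower preparation and bear the full burden of showing the generic tower survives $\dot\Por^1$; even if that were feasible, it is a genuinely different and much heavier route than the paper's.

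The flaw lies in the first step. You define $\Hor$ in $V$ as the poset of $\subseteq^*$-decreasing sequences of length ${<}\mu_\pfrak$ in $[\omega]^{\aleph_0}$, ordered by end-extension, and assert its generic is a tower of length $\mu_\pfrak$. Since $\Hor$ is $\sigma$-closed it adds no reals, so the generic sequence consists entirely of ground-model reals and $\cfrak^{V^\Hor}=\cfrak^V$. But a strictly $\subseteq^*$-decreasing sequence of length $\kappa$ requires at least $\kappa$ distinct (mod finite) reals, hence no tower can have length ${>}\cfrak^V$. Under GCH (or even just $\cfrak^V<\mu_\pfrak$, which Assumption~\ref{hyp:11} permits for any regular $\mu_\pfrak>\aleph_1$), your $\Hor$ simply cannot produce a sequence of length $\mu_\pfrak$, so it does not force $\tfrak\le\mu_\pfrak$, and the whole subsequent argument collapses. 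This difficulty is exactly what the paper's choice of $\Qor=\mu_\pfrak^{{<}\mu_\pfrak}$ is engineered to sidestep: the witness for $\pfrak\le\mu_\pfrak$ in the final model is manufactured using reals of $V^{\Por^1}$, of which there are $\mu_9\ge\mu_\pfrak$ many, via the mutual-genericity of the product, rather than by adding a tower before any reals exist.

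Beyond this, your tower-preservation step is only sketched: you acknowledge that the $\Pi_\mathrm{p}$-track of $\Por^1$ contains Mathias--Prikry factors on small filter bases and propose to ``engineer'' a separate track so that no factor sits on the tower filter, but this engineering would need to be reconciled with the Bell's-theorem argument that gives $\pfrak\ge\mu_\pfrak$ (which requires meeting dense sets for \emph{all} $\sigma$-centered posets of size ${<}\mu_\pfrak$) and with the $\COB_{\mathrm{sp}}$ witness; no such reconciliation is given, and Section~\ref{sec:splpres} proves preservation of \emph{splitting} families, not towers, so the claimed analogy with Theorem~\ref{PresSplit} is not a proof. By contrast, the paper's product decomposition makes all of this unnecessary: $\Qor$ commutes with $\Por^1$, adds no reals, and the Cicho\'n characteristics, $\sfrak$, $\rfrak$, and $\hfrak=\gfrak=\mu_0$ remain as forced by $\Por^1$ essentially for free.
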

\begin{proof}
   Let $\Qor:=\mu_\pfrak^{{<\mu_\pfrak}}$ ordered by end extension, and let $\Por^1$ be the poset constructed in Theorem~\ref{mainstep2}. Exactly as in the proof of~\cite[Thm.~7.4]{GKMS1}, $\Por:=\Por^1\times\Qor$ is as required.
\end{proof}

In the same way, we can prove the Main Theorem corresponding to Figure~\ref{fig:cichonorders}(B). In this case, we initial forcing $\Por^0$ is obtained from Theorem~\ref{altmainstepI}.

\begin{theorem}\label{altmainfinal}
   Both Theorems~\ref{mainstep2} and~\ref{mainfinal} are valid when Assumption~\ref{hyp:11} is modified in the following way:
   \begin{enumerate}[(i)]
       \item We replace the order of the regular cardinals in (1) by
   \[\mu_\mfrak\leq\mu_\pfrak\leq\mu_0\leq\mu_1\leq\mu_3\leq\mu_2\leq\mu_4\leq\mu_5\leq\mu_7\leq\mu_6\leq\mu_8.\]
       \item In (\ref{newcard}), we consider $i_0\in\{0,1\}$, but $\mu_{\mathrm{sp}}\in[\mu_1,\mu_3]$ and $\mu_\rfrak\in[\mu_6,\mu_8]$ when $i_0=1$.
       \item In (\ref{11card}), 
       instead of $\theta_3=\chi_3^+$ and $\chi_3^{<\chi_3}=\chi_3$, assume $\theta_5=\chi_5^+$
       and $\chi_5^{<\chi_5}=\chi_5$.
   \end{enumerate}
\end{theorem}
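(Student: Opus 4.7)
The proof follows the two-step strategy of Theorems~\ref{mainstep2} and~\ref{mainfinal}, substituting Theorem~\ref{altmainstepI} for Theorem~\ref{thm:step3} at the initial stage.

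First, I would apply Theorem~\ref{altmainstepI} with parameters $\lambda_\mfrak = \mu_\mfrak$, $\lambda_{\mathrm{sp}} = \theta_7$, $\lambda_1 = \theta_9$, $\lambda_3 = \theta_5$, $\lambda_2 = \theta_3$, $\lambda_4 = \theta_1$, and $\lambda_5 = \theta_0$. The roles of $\lambda_2$ and $\lambda_3$ are swapped relative to Theorem~\ref{mainstep2} to match the new order $\lambda_1 \leq \lambda_3 < \lambda_2 < \lambda_4$ required by Theorem~\ref{altmainstepI}. The modified condition $\theta_5 = \chi_5^+$ with $\chi_5^{<\chi_5} = \chi_5$ in the altered Assumption~\ref{hyp:11}(iii) is exactly what the ultrafilter-limit preservation technique from~\cite[\S1.3]{GKS} (adapted in Theorem~\ref{altmainstepI}) now requires, since $\lambda_3 = \theta_5$ plays the role previously played by $\lambda_3 = \theta_3$. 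This produces a ccc poset $\Por^0$ satisfying the analogue of Goal~\ref{mainstepI} with the alternative order, namely $\bfraki = \theta_5 < \covNi = \theta_3$ and dually $\nonNi < \dfraki$ on the right side of the upper diagram of Figure~\ref{fig:setup2}.

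Second, exactly as in the proof of Theorem~\ref{mainstep2}, I would construct a nested family of elementary submodels $\{M_{n,\alpha}\}$ (each ${<}\theta_n$-closed of size $\theta_n$, containing the relevant parameters and earlier-stage sequences), set $M^+ := \bigcap_{n=1}^{10} M_n$ with $M_n := \bigcup_{\alpha<\vartheta_n} M_{n,\alpha}$, and take $\Por' := \Por^0 \cap M^+$. By~\cite[Thm.~3.1]{GKMS2} together with Lemma~\ref{lem:mlike}, $\Por'$ is a ccc complete subposet of $\Por^0$ forcing items (a), (b), (d), together with $\cfrak = \theta_{10}$ and $\pfrak \geq \vartheta_{10}$. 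Then Lemma~\ref{lem:gfrak} (applied with $\kappa = \nu = \mu_0$ and $\mu = \vartheta_{11}$) yields a complete subposet $\Por^1 \subseteq \Por'$ of size $\mu_9$ forcing $\pfrak = \gfrak = \mu_0$ and $\cfrak = \mu_9$, proving the alternative analogue of Theorem~\ref{mainstep2}. Finally, as in Theorem~\ref{mainfinal}, the cofinality-preserving product $\Por := \Por^1 \times \mu_\pfrak^{<\mu_\pfrak}$ (with the second factor ordered by end extension) forces $\pfrak = \mu_\pfrak$ while leaving the previously established values intact.

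The main obstacle is conceptual rather than technical: one must verify that the shift of the $\chi^+$ hypothesis from $\theta_3$ to $\theta_5$ is the correct adjustment for the ultrafilter-limit step to preserve the analogue of $\LCU_3$ in the alternative construction. This is already implicit in Theorem~\ref{altmainstepI} (and its sources~\cite{KeShTa:1131,modKST}), where the relevant Blass-uniform relational system $\Rbf^\LCU_2$ is reinterpreted. The submodel argument and all $\COB$/$\LCU$-preservation steps in the remainder of the proof depend only on the matching between the $\theta_j$'s and $\vartheta_n$'s and on the $\aleph_1$-inaccessibility hypotheses, both of which are preserved under the modifications in Assumption~\ref{hyp:11}.
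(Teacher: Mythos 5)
Your proposal is correct and follows precisely the route the paper intends: the paper gives no explicit proof for Theorem~\ref{altmainfinal}, merely remarking that $\Por^0$ is now obtained from Theorem~\ref{altmainstepI} instead of Theorem~\ref{thm:step3}, and your spelled-out substitution ($\lambda_3=\theta_5$, $\lambda_2=\theta_3$, hence the shift of the $\chi^+$-hypothesis from $\theta_3$ to $\theta_5$) together with the unchanged elementary-submodel and Lemma~\ref{lem:gfrak} steps is exactly the argument being invoked.
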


\section{Discussions}\label{sec:disc}

One obvious question is:
\begin{question}
   How to separate additional
   cardinals from Figure~\ref{fig:all20}?
\end{question}

Another one:
\begin{question}
   How to get other orderings, where
   $\nonM>\covM$?
\end{question}
This is not possible with
FS ccc iterations, as any such iteration whose length has uncountable cofinality $\delta$ forces $\nonM\le\cof(\delta)\le\covM$, so alternative methods are required.
A creature forcing method based on the notion of decisiveness~\cite{MR2499421,MR2864397} has been developed in~\cite{MR3696076} to separate five characteristics in Cich\'on's diagram, but this method is restricted to $\omega^\omega$-bounding forcings, i.e., results in $\dfrak=\omega_1$. An unbounded decisive creature construction might be promising. Alternatively, Brendle proposed a method of shattered iterations~\footnote{J.\ Brendle, personal communication}, which
also may be a way to solve this problem.

\begin{question}
   Are our main results (specifically, Theorems~\ref{thm:step3},~\ref{altmainstepI},~\ref{mainstep2},~\ref{mainfinal} and~\ref{altmainfinal}) valid for $k_0=1$? I.e., can we force $\mfrak>\aleph_1$?
\end{question}

For $k_0\geq 2$ there was no problem to include, in our iterations, FS products of $k_0$-Knaster posets since they are still $k_0$-Knaster (hence ccc), but we cannot just use FS products of ccc posets because they do not produce ccc posets in general. In particular, we do not know how to modify Theorem~\ref{thm:step3} to force $\mfrak>\aleph_1$.

\begin{question}
   Is it consistent with $\mathrm{ZFC}$ that $\bfrak<\sfrak<\nonM<\covM$?
\end{question}

In this paper $\sfrak\leq\bfrak$; and
forcing $\sfrak>\bfrak$ is much more difficult, since Mathias-Prikry posets may add dominating reals.
Shelah~\cite{SSCR} proved the consistency of $\bfrak=\aleph_1<\sfrak=\cfrak=\aleph_2$ by a countable support iteration of proper posets. Much later, Brendle and Fischer~\cite{VFJB11} constructed an FS iteration via a matrix iteration to force $\aleph_1<\bfrak=\kappa<\sfrak=\cfrak=\lambda$ for arbitrarily chosen regular $\kappa<\lambda$. However, in this latter model, $\nonM=\covM=\cfrak$. It is not clear how to adapt Brendle's and Fischer's methods to our methods and produce a poset for the previous question.

\bibliography{morebib}
\bibliographystyle{amsalpha}


\end{document}